\documentclass{amsart}
\usepackage{amsfonts,amssymb}
\usepackage{enumerate,graphicx}
\usepackage[labelfont=normal,labelformat=simple]{subcaption}
\usepackage{caption}
\usepackage[square,numbers]{natbib}
\usepackage{mathrsfs,bbm}
\usepackage{tikz,tikzscale,tikz-cd}
\usepackage{multirow,xparse,fp}
\usepackage{environ}
\usepackage{amstext}
\usepackage{hyperref}
\usepackage{diagbox}
\usepackage{float}
\usepackage{graphicx}
\usepackage{psfrag}
\usepackage{psfig}

\setcounter{MaxMatrixCols}{10}

\newtheorem{theorem}{Theorem}[section]

\newtheorem{proposition}[theorem]{Proposition}
\newtheorem{corollary}[theorem]{Corollary}
\newtheorem{lemma}[theorem]{Lemma}
\theoremstyle{definition}

\newtheorem{example}[theorem]{Example}

\newtheorem{problem}{Problem}
\newtheorem{remark}[theorem]{Remark}

\numberwithin{equation}{section}
\theoremstyle{plain} 
\newcommand{\thistheoremname}{}
\newtheorem*{genericthm*}{\thistheoremname}
\newenvironment{namedthm*}[1]
  {\renewcommand{\thistheoremname}{#1}%
   \begin{genericthm*}}
  {\end{genericthm*}}

\newcommand\Nint{\mathbb{N}}
\newcommand\alphabet{\mathcal{A}}

\newcommand\norm[1]{\lvert #1 \rvert}

\newsavebox\dotbox
\sbox{\dotbox}{\(\displaystyle\bigodot\)}
\newlength{\dotheight}
\setlength{\dotheight}{\ht\dotbox}
\addtolength\dotheight{\dp\dotbox}

\usepackage{amsmath}
\usepackage{color}

\begin{document}

\title{On structure of topological entropy for tree-shift of finite type}

\author[J-C Ban]{Jung-Chao Ban}
\address[Jung-Chao Ban]{Department of Mathematical Sciences, National Chengchi University, Taipei 11605, Taiwan, ROC.}
\address{Math. Division, National Center for Theoretical Science, National Taiwan University, Taipei 10617, Taiwan. ROC.}
\email{jcban@nccu.edu.tw}

\author[C-H Chang]{Chih-Hung Chang}
\address[Chih-Hung Chang]{Department of Applied Mathematics, National University of Kaohsiung, Kaohsiung 81148, Taiwan, ROC.}
\email{chchang@nuk.edu.tw}

\author[W-G Hu]{Wen-Guei Hu}
\address[Wen-Guei Hu]{College of Mathematics, Sichuan University, Chengdu, 610064, China}
\email{wghu@scu.edu.cn}

\author[Y-L Wu]{Yu-Liang Wu}
\address[Yu-Liang Wu]{Department of Applied Mathematics, National Chiao Tung University, Hsinchu 30010, Taiwan, ROC.}
\email{s92077.am08g@nctu.edu.tw}

\keywords{tree-SFT; topological entropy}
\subjclass[2010]{Primary 37B40, 37B10, 37E25}
\thanks{Ban and Chang are partially supported by the Ministry of Science and Technology, ROC (Contract No MOST 109-2115-M-004-002-MY2, 109-2115-M-390 -003 -MY3 and 109-2927-I-004-501). Hu is partially supported by the National Natural Science Foundation of China (Grant No.11601355).}

\date{October 1, 2020}

\baselineskip=1.2\baselineskip

\begin{abstract}
This paper deals with the topological entropy for hom Markov shifts $\mathcal{T}_M$ on $d$-tree. If $M$ is a reducible adjacency matrix with $q$ irreducible components $M_1, \cdots, M_q$, we show that $h(\mathcal{T}_{M})=\max_{1\leq i\leq q}h(\mathcal{T}_{M_{i}})$ fails generally, and present a case study with full characterization in terms of the equality. Though that it is likely the sets $\{h(\mathcal{T}_{M}):M\text{ is binary and irreducible}\}$ and $\{h(\mathcal{T}_{X}):X\text{ is a one-sided shift}\}$ are not coincident, we show the two sets share the common closure. Despite the fact that such closure is proved to contain the interval $[d \log 2, \infty)$, numerical experiments suggest its complement contain open intervals.
\end{abstract}
\maketitle
\tableofcontents

\section{Introduction}

Entropy, which was introduced in 1865 by the German physicist and
mathematician Rudolf Clausius, plays a crucial role in different fields of
science, e.g., the information theory and ergodic theory. Such quantity
describes the complexity or richness of a dynamical system. Let $\mathcal{%
H(S)}$ be the entropies of a family of dynamical systems $\mathcal{S}$. To
examine the structure of $\mathcal{H(S)}$, one usually considers the
fundamental properties of $\mathcal{H(S)}$; namely, the monotonicity,
continuity and the denseness of $\mathcal{H(S)}$. For some classical
families of maps $\mathcal{S}$ which are defined in $\mathbb{R}$, $\mathcal{%
H(S)}$ forms the so-called \emph{Devil's staircase} function if the constant part of $\mathcal{H(S)}$ is open and dense in the
parameter space, e.g., logistic maps \cite{MT-1988}.

Before stating the main results of this work, we review some relevant results
of one-sided shifts of finite type (SFTs) $X_{M}$, where $M$ is the
associated adjacency matrix. It is known that $h(X_{M})$ is $\log \lambda
_{M}$, where $\lambda _{M}$ is the spectral radius of $M$, and that the
monotonicity follows from this property. Furthermore, the strict monotonicity holds whenever $M$ admits the irreducibility property (Theorem 4.4.7 \cite{LM-1995}). Precisely, let $M$ be irreducible, $0\leq N\leq M$ and $%
N_{k,l}<M_{k,l}$ for a pair $(k,l)$ of indices, then $h(X_{N})<h(X_{M})$.
Theorem 4.4.7 \cite{LM-1995} indicates that $h(X_{M})$ with irreducible $M$
is in the boundary of some constant part of the entropy function, that is,
every proper subshift of $X_{M}$ decreases the entropy. In addition to the monotonicity, irreducibility is a cornerstone in the theory of topological entropy. Let $M$ be a reducible matrix with irreducible components $M_1, M_2, \cdots M_q$. It is known that (Theorem 4.4.4 \cite%
{LM-1995})
\begin{equation}\label{1}
h(X_{M})=\max_{1\leq i\leq q}h(X_{M_{i}})\text{.}
\end{equation}%
This means that
\begin{equation}
\{h(X_{M}):M\text{ is binary and irreducible}\}=\mathcal{H(S)}\text{,}
\label{2}
\end{equation}%
where $\mathcal{S}$ is the family of the one-sided SFTs. 

Novel phenomena have been revealed in the tree-shifts. Tree-shifts, which was proposed by Aubrun and Beal \cite{AB-TCS2012,
AB-ToCS2013}, are shift spaces defined in free semigroups. Such shifts have
received extensive attention recently since the tree-shifts exhibit the
nature structure of one- and higher-dimensional dynamical systems and are
equipped with multiple directional shift transformations. Among all, hom tree-shifts form an important class since it was inspired by the physical
models from statistical mechanics, e.g., lattice gases and spin systems. A \emph{hom Markov
tree-shift} is a nearest neighbor shift of finite type which is isotropic,
that is, if $a,b$ are two symbols forbidden to sit next to each other in
some coordinate direction, then they are forbidden to sit next to each other
in all coordinate directions. 
Petersen and Salama introduce the \emph{topological
entropy} for tree-shifts and prove that the limit in the definition exists,
and the limit in the definition is actually the infimum \cite%
{petersen2018tree,petersen2020entropy}. For the measure-theoretic results of entropy we refer
the reader to \cite{austin2018gibbs, mairesse2017uniform} and references
therein.

Since the hom Markov tree-shift, $\mathcal{T}_{M}$, is induced from the one-dimensional shift $X_{M}$, where $M$ is
the associated adjacency matrix (see Section 2 for more details), it is
therefore of interest to study the relations between $\mathcal{T}_{M}$ and $%
X_{M}$. Petersen and Salama \cite%
{petersen2018tree} show that the topological entropy $h(\mathcal{T}_{M})$
dominates the topological entropy $h(X_{M})$. Ban et al.~\cite%
{ban2020topological} demonstrate that for irreducible $M$, $h(\mathcal{T}_{M})$ and $h(X_{M})$
are equal if and only if $M$ has the property of equal row sum (Theorem 2.1.~\cite{ban2020topological}), that is, $\underset{i}{\max}\hspace{0.1cm}\underset{j}{\sum}M_{i,j}=\underset{i}{\min}\hspace{0.1cm}\underset{j}{\sum}M_{i,j}$.
Although the relation of the $h(\mathcal{T}_{M})$ and $%
h(X_{M})$ is known, the structure of the entropy function of hom Markov
tree-shifts is far from being conclusive. For this purpose, the paper
investigates the structure of $\mathcal{H(S)}$ when $\mathcal{S}$ is the
family of hom Markov tree-shifts.

The first result of this work is to extend Theorem 4.4.7 \cite{LM-1995} to
the case where $\mathcal{S}$ is the family of hom Markov tree-shifts
(Theorem \ref{theorem:2.2}). Surprisingly, \eqref{1} is no longer true under such a circumstance. For example, suppose $b>a\geq 1$ and $0 < l\leq b$,
consider the reducible upper triangular matrix
\begin{equation*}
M(a,b;l)=%
\begin{bmatrix}
E_{a} & R_{l} \\
0 & E_{b}%
\end{bmatrix}%
\text{,}
\end{equation*}%
where $E_{a}\in \{0,1\}^{a\times a}$, $E_{b}\in \{0,1\}^{b\times b}$ are full
matrix, and $R_{l}\in \{0,1\}^{a\times b}$ has a constant row sum $l$. Theorem \ref{theorem:3.0.2} reveals that
\begin{equation}
h(\mathcal{T}_{M(a,b;l)})>\max \{h(\mathcal{T}_{E_{a}}),h(\mathcal{T}%
_{E_{b}})\}=h(\mathcal{T}_{E_{b}})=\log b\text{,}  \label{3}
\end{equation}%
for a set of $(a,b,l)\in \mathbb{N}^{3}$. The complete
characterization of $h(\mathcal{T}_{M(a,b;l)})=h(\mathcal{T}_{E_{b}})$ is
presented in Theorem \ref{theorem:3.2}. Furthermore, Theorem \ref{theorem:3.0.2-0}
extends to the case where $M$ has
$q>2$ irreducible components. We emphasize that the problem of finding all
possible values of $(a,b,l)$ in which (\ref{3}) holds for hom Markov
tree-shifts depends on the `sizes' and `forms' of the irreducible parts and
the upper right corner matrix of $M$ (Proposition \ref{proposition:5.1}). Furthermore, the structure of the entropy function of the hom Markov tree-shifts is not only determined by the family of $\mathcal{T}_{M}$ with irreducible $M$. 

Since the equality \eqref{1} is no longer true for the family of hom Markov tree-shift, the equality \eqref{2} may fail due to the fact that the collection of reducible matrices $M$ may produce more values of entropy. However, Theorem \ref{thm:denseness_irreducible_shift} together with Corollary \ref{cor:universal_closure} show that 
\begin{equation}
    \overline{\{h(\mathcal{T}_M): M \text{ is binary and irreducible}\}} = \overline{\{h(\mathcal{T}_X):X \text{ is a shift space}\}} = \overline{\mathcal{H(S)}},
\end{equation}
where $\mathcal{S}$ is the set of hom Markov tree-shift and $\overline{E}$ is the closure of the set $E$. The novel phenomenon indicates that the set of hom Markov tree-shifts $\mathcal{T}_M$ with irreducible $M$ is the set of basic `building blocks' of $\mathcal{S}$ for the entropy.

Let $\mathcal{S}$ be the family of the one-sided SFTs. As we mentioned
before, $h(X_{M})=\log \lambda _{M}$, where $\lambda _{M}$ is the spectral
radius of $M$. The converse is also true if $M$ is an aperiodic matrix. To
be precise, if $\lambda $ is a Perron number, then there is a nonnegative
aperiodic integral matrix whose spectral radius is $\lambda $ \cite%
{lind1983entropies, Lind-ETaDS1984}. Combining this with the fact that the set
of Perron value is dense in $[1,\infty )$ we conclude that $\mathcal{H(S)}$
is dense in $[0,\infty )$.
However, this is not generally true if $\mathcal{S%
}$ is the family of hom Markov tree-shifts.
Only the case for $2$-tree is discussed below, and the cases for general $d$-tree have a similar phenomenon.
Ban et al.~prove that $h(%
\mathcal{T}_{M})=0$ or $h(\mathcal{T}_{M})\geq \frac{1}{2}\log 2$
(Proposition 4.1 \cite{ban2020topological}), that is, $\mathcal{H(S)}$ is
not dense in $(0,\frac{1}{2}\log 2)$. The question whether $\mathcal{H(S)}$ is
dense in $[\frac{1}{2}\log 2,\infty )$ arises naturally. In Theorem \ref{theorem:2.2.1}, we
prove that $\{h(\mathcal{T}_{M}):M$ is binary and irreducible$\}$ is dense
in $[\log 2,\infty )$. It is worth pointing out that the numerical results
suggest that there exist intervals in $[\frac{1}{2}\log 2$, $\log 2)$ in which $\mathcal{H(S)}$ is not dense. We also note that the $\mathcal{T}%
_{M}$ we construct in Theorem \ref{theorem:2.2.1} has the property that $M$ is irreducible.
However, the results in Section 2 reveal that the $h(\mathcal{T}_{M})$ with
reducible $M$ may increase the values of $\underset{1\leq i\leq q}{\max}h(\mathcal{T}%
_{M_{i}})$, where $M_{i}$ is the irreducible component in the diagonal part
of $M$, i.e., those $\mathcal{T}_{M}$ may increase the possible values of $\{h(%
\mathcal{T}_{M}):M$ is binary and irreducible$\}$. Thus we suspect that
there is another way to prove more intervals less than $\log 2$ in which $%
\mathcal{H(S)}$ is dense. The main difficulty occurs in the study of the
entropy structure of $h(\mathcal{T}_{M})$, where $M$ is a reducible matrix.
This question is at present far from being solved.

\section{Preliminary}

\subsection{Notations and definitions}

First, some necessary notations and definitions are introduced. For $d\geq 2$, denote by $\Sigma=\{0,1,\cdots,d-1\}$. Let the \emph{$d$-tree}
$\Sigma^{*}=\cup_{n\geq0}\Sigma^{n}$ be the set of all finite words on $\Sigma$, where $\Sigma^{n}$ is the set all words with length $n\geq 1$ and    $\Sigma^{0}=\{\epsilon\}$ consists of the empty word $\epsilon$. A \emph{labeled tree} $t:\Sigma^{*}\rightarrow\mathcal{A}$ is a global configuration on $d$-tree with finite alphabet $\mathcal{A}$.
Given a labeled tree and a node $w\in\Sigma^{*}$, $t_{w}$ is the label of $t$ on $w$.
 Denote by $\Delta_{n}=\cup_{i=0}^{n}\Sigma^{i}$ the $n$-th subtree of $\Sigma^{*}$. An \emph{$n$-block} $u$ is defined by $u=t|_{\Delta_{n}}$ for some labeled tree $t$.

For a one-dimensional shift space $X=X_{\mathcal{F}}\subseteq\mathcal{A}^{\mathbb{N}}$ with forbidden set $\mathcal{F}$, denote by $\mathcal{B}_{n}(X)$ the set of all feasible $n$-words of $X$. Given a shift space $X$,
the associated \emph{hom tree-shift} $\mathcal{T}_{X}\subseteq \mathcal{A}^{\Sigma^{*}}$ is the set of all labeled trees $t$ such that
$\{t_{w_{i}}\}_{i\geq0}=(t_{w_{0}},t_{w_{1}},\cdots)\in X$ for any node $w_{i}\in\Sigma^{i}$, $i\geq 0$. In particular, given a binary matrix $M=[M_{i,j}]$ indexed by $\mathcal{A}$, a \emph{Markov shift} $X_{M}$ is defined by
\begin{equation*}
X_{M}=\left\{ x=\{x_{i}\}\in\mathcal{A}^{\mathbb{N}}:M_{x_{i},x_{i+1}}=1 \text{ for }i\geq0 \right\},
\end{equation*}
and the associated hom tree-shift $\mathcal{T}_{M}=\mathcal{T}_{X_{M}}$ is called a \emph{hom Markov tree-shift}.

Given a hom Markov tree-shift $\mathcal{T}_{M}$, we denote by $\mathcal{B}_n(\mathcal{T}_{M})$ the set of $n$-blocks and by $p(n)=p_{M}(n)$ the number of $n$-blocks of $\mathcal{T}_{M}$ for $n\geq 0$. Furthermore, if the graph representation $G$ of $\mathcal{T}_M$ is specified, we write $p_{G}(n)=p_{M}(n)$. The \emph{topological entropy} $h(\mathcal{T}_{M})$ of $\mathcal{T}_{M}$ is defined by
\begin{equation*}
h(\mathcal{T}_{M})=\underset{n\rightarrow\infty}{\lim}\frac{\log p(n)}{|\Delta_{n}|},
\end{equation*}
which measures the growth rate of the number of $n$-blocks of $\mathcal{T}_{M}$. The existence of the limit is proved by Petersen and Salama \cite{petersen2018tree,petersen2020entropy}.
Assume $M\in\{0,1\}^{k\times k}$, $k\geq 1$. For $1\leq i\leq k$, denote by $p_{M;i}(n)$ (or $p_{i}(n)$ if there is no confusion) the number of  $n$-blocks $u$ for $\mathcal{T}_{M}$ with $u_{\epsilon}=i$. For a finite sequence $\mathbf{s}_{n}=\left\{s_{i}\right\}_{i=1}^{d^{n}}$ where $1\leq s_{i}\leq k$, $1\leq i\leq d^{n}$, denote by $p_{M;\mathbf{s}_{n}}(n)$ the number of $n$-blocks for $\mathcal{T}_{M}$ whose labels on the bottom layer $\Sigma^{n}$ from left to right are $s_{1}, s_{2}, \cdots, s_{d^{n}}$. If $G$ is the graph representation of $\mathcal{T}_M$, we also denote $p_{G;\mathbf{s}_{n}}(n)=p_{M;\mathbf{s}_{n}}(n)$.

Petersen and Salama \cite{petersen2018tree} have demonstrated that if $M$ is irreducible,
\begin{equation}\label{eqn:2.0.1}
\underset{n\rightarrow\infty}{\limsup}\frac{\log p_{i}(n)}{|\Delta_{n}|}=h(\mathcal{T}_{M})
\end{equation}
for $1\leq i\leq k$;
furthermore, if $M$ is primitive,
\begin{equation*}
\underset{n\rightarrow\infty}{\lim}\frac{\log p_{i}(n)}{|\Delta_{n}|}=h(\mathcal{T}_{M})
\end{equation*}
for $1\leq i\leq k$.

\subsection{Monotonicity of $h(\mathcal{T}_{M})$ for irreducible $M$ }

In this subsection, the strict monotonicity of $h(\mathcal{T}_{M})$ for irreducible $M$ is considered. More specifically, we will show that if $B$ is irreducible and $A=[a_{i,j}]_{k\times k}>B=[b_{i,j}]_{k\times k}$, then $h(\mathcal{T}_{A})>h(\mathcal{T}_{B})$, where $A>B$ means $a_{i,j}\geq b_{i,j}$ for all $1\leq i,j\leq k$ and there exist $1\leq i',j'\leq k$ such that $a_{i',j'}> b_{i',j'}$. Before showing the main result, we prove the following lemma.

\begin{lemma}
\label{lemma:2.1}
Suppose $A$ and $B\in\{0,1\}^{k\times k}$. If $B$ is irreducible and $A>B$, then there exists $N\geq1$ such that
\begin{equation}\label{eqn:2.1}
 p_{A;i}(N)>p_{B;i}(N)
\end{equation}
for $1\leq i\leq k$.
\end{lemma}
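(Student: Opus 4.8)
The plan is to exploit the recursion that the quantities $p_{M;i}(n)$ obey on the $d$-tree. Since the root of an $n$-block carrying label $i$ has $d$ children, each of which may \emph{independently} carry any label $j$ admissible below a parent labelled $i$ (that is, $M_{i,j}=1$) and then root an arbitrary $(n-1)$-block, one gets
\begin{equation*}
p_{M;i}(n)=\Bigl(\sum_{j=1}^{k}M_{i,j}\,p_{M;j}(n-1)\Bigr)^{d},\qquad p_{M;i}(0)=1 .
\end{equation*}
From this, a routine induction on $n$ gives monotonicity in the matrix: since $A\geq B$ entrywise with nonnegative entries, $p_{A;i}(n)\geq p_{B;i}(n)$ for all $i$ and $n$. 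As $B$ is irreducible it has no zero row, hence neither does $A$, so in fact $p_{A;i}(n)\geq p_{B;i}(n)\geq 1$ throughout, a bound I will use repeatedly.

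First I would record where strictness is born. Put $D=\{i:\exists\,j,\ A_{i,j}>B_{i,j}\}$; by hypothesis $D\neq\varnothing$. For $i\in D$ and any $n\geq 1$, writing the $A$-sum as $\sum_j B_{i,j}\,p_{A;j}(n-1)+\sum_{j:\,A_{i,j}>B_{i,j}}p_{A;j}(n-1)$ shows it strictly exceeds the corresponding $B$-sum, the extra terms being each $\geq 1$; since $x\mapsto x^{d}$ is strictly increasing on $[0,\infty)$, this yields $p_{A;i}(n)>p_{B;i}(n)$. Thus, setting $S_n=\{i:p_{A;i}(n)>p_{B;i}(n)\}$, we have $D\subseteq S_n$ for every $n\geq 1$.

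The crux is to spread strictness from $D$ to all indices. The mechanism is: if $B_{i,j}=1$ and $j\in S_{n-1}$, then $i\in S_n$, because the $j$-th summand already makes the $B$-weighted sum for $A$ strictly larger than for $B$, and again the $d$-th power is strictly monotone. In graph terms, letting $G_B$ be the digraph on $\{1,\dots,k\}$ with an arc $i\to j$ whenever $B_{i,j}=1$, this says $S_n$ contains every predecessor in $G_B$ of $S_{n-1}$. Combined with $D\subseteq S_n$, an induction gives $S_n\supseteq\{i:\operatorname{dist}_{G_B}(i,D)\leq n-1\}$, where $\operatorname{dist}_{G_B}(i,D)$ is the length of a shortest directed path from $i$ into $D$.

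Finally I would invoke irreducibility: $B$ irreducible means $G_B$ is strongly connected, so every vertex reaches the nonempty set $D$ along a directed path of length at most $k-1$. Hence $S_N=\{1,\dots,k\}$ already for $N=k$, which is the assertion. The only genuinely delicate point is the \emph{simultaneity} over all $i$ in the conclusion; everything else is bookkeeping once the recursion is available. I expect the main obstacle to be exactly this propagation step, and the reachability argument through strong connectivity is what controls it and pins down the explicit value $N=k$.
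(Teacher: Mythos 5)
Your proof is correct, but it travels a genuinely different road from the paper's. The paper first works one-dimensionally: it picks a single entry $(i^{\ast},j^{\ast})$ with $a_{i^{\ast},j^{\ast}}=1>b_{i^{\ast},j^{\ast}}=0$, uses irreducibility of $B$ to find, for each $i$, a power $n(i,i^{\ast})$ with $(B^{n(i,i^{\ast})})_{i,i^{\ast}}\geq 1$, deduces that the row sums of $A^{N}$ strictly dominate those of $B^{N}$ for $N=\max_i\{n(i,i^{\ast})+1\}$ (i.e.\ there are strictly more length-$(N+1)$ words of $X_A$ than of $X_B$ starting at $i$), and only then lifts this to tree blocks by completing each extra one-dimensional word along the leftmost branch $\epsilon,0,0^2,\dots$ of $\Delta_N$ into a full $N$-block; that last transfer step is stated rather tersely. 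You instead stay entirely at the level of the tree recursion $p_{M;i}(n)=\bigl(\sum_{j}M_{i,j}\,p_{M;j}(n-1)\bigr)^{d}$ and propagate the set $S_n$ of indices where strict inequality holds backwards along edges of $G_B$, using that every block count is at least $1$ because $B$ (hence $A$) has no zero row. The underlying mechanism is the same in both arguments --- strictness is born at the rows of $D$ and spreads to all indices because $G_B$ is strongly connected --- but your version avoids the one-dimensional detour and the lifting step altogether, and it delivers the clean explicit bound $N=k$ (any vertex reaches the nonempty set $D$ in at most $k-1$ steps). The only hypotheses you should make explicit are that the recursion above is the standard block-count identity for hom Markov tree-shifts (it is the identity $p_i(n+1)=\bigl((M\mathbf{p}(n))_i\bigr)^{d}$ the paper quotes from Petersen--Salama) and that irreducibility guarantees every row of $B$ is nonzero so that all the counts $p_{B;j}(n)$ and $p_{A;j}(n)$ are at least $1$; with those in place every step of your argument closes.
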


\begin{proof}
Let $A=[a_{i,j}]$ and $B=[b_{i,j}]$. Since $A>B$, without loss of generality, we assume $a_{i^{\ast},j^{\ast}}=1$ and $b_{i^{\ast},j^{\ast}}=0$ for some $1\leq i^{\ast},j^{\ast}\leq k$.
Because $B$ is irreducible, for $1\leq i\leq k$, we have $\sum_{j=1}^{k}(B^{m})_{i,j}\geq 1$ for all $m\geq 1$, and there exists $n(i,i^{\ast})\geq 1$ such that $\left(B^{n(i,i^{\ast})}\right)_{i,i^{\ast}}\geq 1$.
Clearly,
\begin{equation*}
\begin{array}{rcl}
\left(B^{n(i,i^{\ast})}\right)_{i,i^{\ast}}\cdot b_{i^{*},j^{*}} = 0  & \text{and}
 &\left(A^{n(i,i^{\ast})}\right)_{i,i^{\ast}}\cdot a_{i^{*},j^{*}} \geq 1. \\
\end{array}
\end{equation*}
Then, it can be proven that
\begin{equation*}
\begin{array}{rcl}
\sum_{j=1}^{k}\left(A^{n(i,i^{\ast})+m}\right)_{i,j} & > & \sum_{j=1}^{k}\left(B^{n(i,i^{\ast})+m}\right)_{i,j}.\\
\end{array}
\end{equation*}
for all $m\geq 1$. By taking $N=\underset{1\leq i\leq k}{\max}\left\{n(i,i^{\ast})+1\right\}$,
\begin{equation*}
\begin{array}{rcl}
\sum_{j=1}^{k}\left(A^{N}\right)_{i,j} & > & \sum_{j=1}^{k}\left(B^{N}\right)_{i,j}\\
\end{array}
\end{equation*}
for all $1\leq i \leq k$, which implies
\begin{equation}\label{eqn:2.2}
\begin{array}{rl}
 & \left|\left\{ x=(x_{0},x_{1},\cdots, x_{N})\in \mathcal{B}_{N+1}(X_{A}):x_{0}=i \right\}\right| \\
 >  & \left|\left\{ x=(x_{0},x_{1},\cdots, x_{N})\in \mathcal{B}_{N+1}(X_{B}):x_{0}=i \right\}\right|
\end{array}
\end{equation}
for all $1\leq i \leq k$.

From the irreducibility of $A$, for any feasible $(n+1)$-word $x=(x_{0},x_{1},\cdots, x_{n})\in \mathcal{B}_{n+1}(X_{A})$,
\begin{equation*}
\left\{u\in \mathcal{T}_{A}\mid_{\Delta_{n}}: u_{\epsilon}=x_{0} \text{ and } u_{0^{j}}=x_{j}, 1\leq j\leq n  \right\}\neq\emptyset.
\end{equation*}
The case for $B$ is also valid.
Therefore, by $A>B$ and (\ref{eqn:2.2}), (\ref{eqn:2.1}) follows immediately. The proof is complete.
\end{proof}

The following theorem shows that the strict monotonicity of $h(\mathcal{T}_{M})$ for irreducible $M$ holds.
\begin{theorem}
\label{theorem:2.2}
Let $A$ and $B\in\{0,1\}^{k\times k}$. If $B$ is irreducible and $A>B$, then $h(\mathcal{T}_{A})>h(\mathcal{T}_{B})$.
\end{theorem}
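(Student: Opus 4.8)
The plan is to exploit the finite-level gap furnished by Lemma \ref{lemma:2.1} and show that it amplifies under the tree recursion so strongly that it survives in the entropy limit. First I observe that since $A\geq B$ entrywise and $B$ is irreducible, the directed graph of $B$ is a strongly connected subgraph of that of $A$, so $A$ is irreducible as well; hence the entropy characterization \eqref{eqn:2.0.1} applies to both $A$ and $B$. The engine of the argument is the branching recursion for the root-labelled block counts on the $d$-tree,
\begin{equation*}
p_{M;i}(n) = \Big(\sum_{j=1}^{k} M_{i,j}\, p_{M;j}(n-1)\Big)^{d}, \qquad p_{M;i}(0)=1,
\end{equation*}
which records that each of the $d$ child-subtrees of the root (labelled $i$) is an independent $(n-1)$-block whose own root $j$ must satisfy $M_{i,j}=1$.

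By Lemma \ref{lemma:2.1} there is an $N$ with $p_{A;i}(N) > p_{B;i}(N)$ for every $i$; since $B$ is irreducible, $p_{B;i}(N) \geq 1$, so
\begin{equation*}
c := \min_{1\leq i\leq k} \frac{p_{A;i}(N)}{p_{B;i}(N)} > 1.
\end{equation*}
I claim that this multiplicative gap is raised to the $d$-th power at each further level, i.e.
\begin{equation*}
p_{A;i}(N+m) \;\geq\; c^{\,d^{m}}\, p_{B;i}(N+m) \qquad \text{for all } i \text{ and all } m\geq 0.
\end{equation*}
The base case $m=0$ is the definition of $c$. For the inductive step I feed the hypothesis into the recursion: using $A_{i,j}\geq 0$ to pull the factor $c^{d^{m}}$ out of the sum, then $A_{i,j}\geq B_{i,j}$ to drop $A$ down to $B$ inside it, the $d$-th power turns $(c^{d^{m}})^{d}=c^{d^{m+1}}$ and reproduces $p_{B;i}(N+m+1)$. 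This is exactly where both hypotheses $A>B$ (entrywise domination) and the fixed finite gap are used.

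Summing over $i$ gives $p_{A}(N+m) \geq c^{d^{m}} p_{B}(N+m)$ for the total counts, hence
\begin{equation*}
\frac{\log p_{A}(N+m)}{|\Delta_{N+m}|} \;\geq\; \frac{d^{m}\log c}{|\Delta_{N+m}|} + \frac{\log p_{B}(N+m)}{|\Delta_{N+m}|}.
\end{equation*}
Letting $m\to\infty$, the first and last terms converge to $h(\mathcal{T}_{A})$ and $h(\mathcal{T}_{B})$ respectively, while the key middle term tends to a \emph{strictly positive} constant because $|\Delta_{N+m}| = (d^{N+m+1}-1)/(d-1)$, so that $d^{m}/|\Delta_{N+m}| \to (d-1)/d^{N+1}$. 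This yields $h(\mathcal{T}_{A}) \geq h(\mathcal{T}_{B}) + (d-1)d^{-(N+1)}\log c > h(\mathcal{T}_{B})$. The main obstacle, and the crux of the whole argument, is precisely this last convergence: on the tree the bottom layer $\Sigma^{n}$ is a positive fraction of $\Delta_{n}$, so a gap seeded at a fixed depth is not diluted away in the normalizing denominator as it would be in the one-dimensional setting. Recognizing that the $d$-fold branching amplifies the gap just fast enough to keep pace with $|\Delta_{n}|$ is what makes the strict inequality go through.
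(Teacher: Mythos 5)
Your argument is correct and is essentially the paper's proof: both hinge on Lemma \ref{lemma:2.1} producing a uniform ratio $c>1$ at a finite level $N$, amplify it to $c^{d^{n}}$ at level $N+n$, and observe that $d^{n}/|\Delta_{N+n}|\to(d-1)/d^{N+1}>0$, yielding the identical entropy gap $(d-1)d^{-(N+1)}\log c$. The only difference is cosmetic: the paper obtains $p_{A}(n+N)\geq c^{d^{n}}p_{B}(n+N)$ in one step by grafting $N$-blocks onto the bottom layer of an $n$-block via the identity $p_{A}(n+N)=\sum_{\mathbf{s}_{n}}p_{A;\mathbf{s}_{n}}(n)\prod_{j}p_{A;s_{j}}(N)$, whereas you propagate the gap upward by induction on the one-level root recursion $p_{M;i}(n)=\bigl(\sum_{j}M_{i,j}\,p_{M;j}(n-1)\bigr)^{d}$.
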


\begin{proof}
From Lemma \ref{lemma:2.1}, there exists $N\geq 1$ such that $p_{A;i}(N)>p_{B;i}(N)$ for all $1\leq i\leq k$. Let $c=\underset{1\leq i\leq k}{\min}\frac{p_{A;i}(N)}{p_{B;i}(N)}>1$.
For $n\geq 1$,
\begin{equation*}
\begin{array}{rcl}
p_{A}(n+N) & = & \underset{1\leq s_{1},s_{2},\cdots,s_{d^{n}}\leq k}{\sum}p_{A;\mathbf{s}_{n}}(n)\cdot \left(\prod_{j=1}^{d^{n}}p_{A;s_{j}}(N)\right)\\
& & \\
 & \geq  &  \underset{1\leq s_{1},s_{2},\cdots,s_{d^{n}}\leq k}{\sum}p_{B;\mathbf{s}_{n}}(n)\cdot \left(\prod_{j=1}^{d^{n}}p_{A;s_{j}}(N)\right)\\
& & \\
 & \geq  &  \underset{1\leq s_{1},s_{2},\cdots,s_{d^{n}}\leq k}{\sum}p_{B;\mathbf{s}_{n}}(n)\cdot c^{d^{n}}\cdot \left(\prod_{j=1}^{d^{n}}p_{B;s_{j}}(N)\right)\\
& & \\
  & = & c^{d^{n}}\cdot p_{B}(n+N).
\end{array}
\end{equation*}
Therefore,
\begin{equation*}
\begin{array}{rcl}
h(\mathcal{T}_{A}) & = & \underset{n\rightarrow\infty}{\limsup}\frac{\log p_{A}(n+N)}{\left|\Delta_{n+N}\right|}\\
 & & \\
 & \geq &  \underset{n\rightarrow\infty}{\limsup}\frac{\log \left(c^{d^{n}}\cdot p_{B}(n+N)\right)}{\left|\Delta_{n+N}\right|} \\
 & & \\
 & = & \frac{(d-1)\log c}{d^{N+1}}+h(\mathcal{T}_{B})\\
 & & \\
 & > & h(\mathcal{T}_{B}).
\end{array}
\end{equation*}
The proof is complete.
\end{proof}

\begin{remark}
\label{remarl:2.1.10}
Theorem \ref{theorem:2.2} is analogous to Theorem 4.4.7 \cite{Lind-ETaDS1984} for one-dimensional irreducible SFTs. We also emphasize that, as one-dimensional reducible SFTs, the strict monotonicity of $h(\mathcal{T}_{M})$ for reducible $M$ is not true, that is, if $B$ is reducible and $A>B$, it could happen that $h(\mathcal{T}_{A})=h(\mathcal{T}_{B})$. For example, let $A=\left[\begin{array}{ccc} 1 & 1 & 0 \\ 0 & 1 & 1\\ 0 & 1 & 1\end{array}\right]$ and $B=\left[\begin{array}{ccc} 1 & 0 & 0 \\ 0 & 1 & 1\\ 0 & 1 & 1\end{array}\right]$. From Theorem \ref{theorem:3.2}, it can be checked immediately that $h(\mathcal{T}_{A})=h(\mathcal{T}_{B})=\log2$.
\end{remark}

\section{$h(\mathcal{T}_{M})$ for reducible $M$}
Let the irreducible components of reducible $M$ are $M_{1},M_{2},\cdots,M_{q}$. In one-dimensional reducible shifts of finite type, it is known that $h(X_{M})=\underset{1\leq i\leq q}{\max}\{h(X_{M_{i}})\}$ and $h(X_{M_{i}})$ is equal to the logarithm of the spectral radius of $M_{i}$. However,
the preceding equality is no longer true for the hom Markov tree-shifts. Our results below demonstrate rather strikingly that the criterion of the equality depends on the set of all irreducible components of $M$ and the connections among them.
In this section, we consider the case that all irreducible components of $M$ are of the form $M_{i}=E_{n_{i}}$, where $E_{n}$ is the $n\times n$ matrix whose all entries are $1$. Clearly, $\underset{1\leq i\leq q}{\max}\{h(\mathcal{T}_{M_{i}})\}=\underset{1\leq i\leq q}{\max}\{n_{i}\}$.

\subsection{Reducible $M$ with two irreducible components}
This subsection considers a certain class of the reducible $M=M(a,b;l)$ with exactly two irreducible components $E_{a}$ and $E_{b}$. Our goal is to provide checkable necessary and sufficient conditions to determine whether $h(\mathcal{T}_{M})=\max\left\{h(\mathcal{T}_{E_a}),h(\mathcal{T}_{E_b})\right\}$. 

For any $a,b\geq 1$ and $0\leq l\leq b$, consider the reducible matrix

\begin{equation}\label{eqn:3.1}
M=M(a,b;l)=
\left[
\begin{array}{cc}
E_{a} & R_{l} \\
O & E_{b}
\end{array}
\right],
\end{equation}
where $O$ is $b\times a$ zero matrix and $R_{l}\in\{0,1\}^{a\times b}$ has the same row sum $l$. Clearly, $E_{a}$ and $E_{b}$ are the irreducible components of $M$. Notably, for $a\geq b$, the relationship between $h(\mathcal{T}_{M(a,b;l)})$ and $\max\{h(\mathcal{T}_{E_{a}}),h(\mathcal{T}_{E_{b}})\}=\log a$ is straightforward as below. If $a\geq b$ and $l=0$, it is clear that $h(\mathcal{T}_{M})=\log a$; if $a\geq b$ and $1\leq l\leq b$, we can check that $p(n)\geq a^{|\Delta_{n-1}|}\cdot (a+l)^{d^{n}}$, and then $h(\mathcal{T}_{M})>\log a$. Therefore, we always assume that $b>a$. The following lemma is crucial to obtain the main results.

\begin{lemma}
\label{lemma:3.0.1}
 Suppose $\alpha>0$, $\beta\geq 0$ and $d\geq 2$. Let $f(x)=\left(\alpha x+\beta\right)^{d}$ and  $\chi_{n+1}=f(\chi_{n})$, $n\geq 0$, with $\chi_{0}=1$. Let the largest real root and second largest real root of $f(x)=x$ on $[1,\infty)$ be $x_{+}$ and $x_{-}$ respectively (if $x_{+}$ exists but $x_{-}$ does not exist, let $x_{-}=x_{+}$). Then, for $\alpha+\beta>1$,
$\{\chi_{n}\}$ is increasing, and
\begin{itemize}
\item[(a)]if $x_{+}$ does not exist or $x_{+}<1$, then $\{\chi_{n}\}$ approaches $\infty$ as $n$ tends to $\infty$,
\item[(b)]if $x_{+}\geq 1$, then $\{\chi_{n}\}$ approaches $x_{-}$ as $n$ tends to $\infty$.

\end{itemize}
\end{lemma}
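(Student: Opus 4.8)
The plan is to treat the recursion $\chi_{n+1} = f(\chi_n)$ as a one-dimensional dynamical system on $[1,\infty)$ and to extract everything from two elementary facts about $f(x) = (\alpha x + \beta)^{d}$: that $f$ is strictly increasing on $[1,\infty)$, and that the displacement $g(x) := f(x) - x$ is strictly convex there. First I would record that $f'(x) = d\alpha(\alpha x + \beta)^{d-1} > 0$ for $x \geq 1$ (since $\alpha x + \beta \geq \alpha + \beta > 1$), so $f$ is strictly increasing on $[1,\infty)$, and that $\chi_{1} = (\alpha+\beta)^{d} > 1 = \chi_{0}$. A routine induction using the monotonicity of $f$ then shows $\chi_{n+1} \geq \chi_{n} \geq 1$ for all $n$, which gives the asserted monotonicity; being an increasing real sequence, $\{\chi_{n}\}$ either converges to a finite limit $L \geq 1$ or diverges to $\infty$, and by continuity any finite limit must satisfy $L = f(L)$, i.e.\ $L$ is a fixed point of $f$ in $[1,\infty)$.

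The structural heart of the argument is to count and locate these fixed points. I would set $g(x) = f(x) - x$ and compute $g''(x) = d(d-1)\alpha^{2}(\alpha x + \beta)^{d-2} > 0$ on $[1,\infty)$, so $g$ is strictly convex there; since $g(1) = (\alpha+\beta)^{d} - 1 > 0$ and $g(x) \to \infty$ as $x \to \infty$, a strictly convex function positive at the left endpoint and at infinity has either no zero on $[1,\infty)$, exactly one zero (a tangential contact, the coincident case $x_{-} = x_{+}$), or exactly two zeros. This trichotomy is precisely what the hypotheses on $x_{+}$ and $x_{-}$ encode.

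For part (a), the hypothesis that $x_{+}$ does not exist or $x_{+} < 1$ says there is no fixed point of $f$ in $[1,\infty)$; combined with $g(1) > 0$ and the intermediate value theorem, this yields $g(x) > 0$, i.e.\ $f(x) > x$, for all $x \geq 1$. An increasing sequence in $[1,\infty)$ then cannot converge to a finite limit $L$, since such $L$ would be a fixed point contradicting $f > \mathrm{id}$; hence $\chi_{n} \to \infty$. For part (b), with $x_{+} \geq 1$ present, convexity and $g(1) > 0$ force the sign pattern $g > 0$ on $[1, x_{-})$ together with $f(x_{-}) = x_{-}$ (in the double-root case $x_{-} = x_{+}$ one still has $g > 0$ on $[1, x_{-})$). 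I would then trap the orbit below $x_{-}$: since $\chi_{0} = 1 < x_{-}$ and $f$ is increasing with $f(x_{-}) = x_{-}$, the inequality $\chi_{n} < x_{-}$ propagates, because $\chi_{n+1} = f(\chi_{n}) < f(x_{-}) = x_{-}$. Thus $\{\chi_{n}\}$ is increasing and bounded above by $x_{-}$, so it converges to some $L \in [1, x_{-}]$ with $L = f(L)$; but $g > 0$ on $[1, x_{-})$ leaves $x_{-}$ as the only fixed point in $[1, x_{-}]$, giving $L = x_{-}$.

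The main obstacle is the bookkeeping in the middle step: justifying rigorously that $g$ has at most two zeros on $[1,\infty)$ and pinning down the sign of $g$ on each subinterval, together with reconciling the degenerate conventions ($x_{-}$ undefined, or $x_{+} < 1$) with the clean dichotomy ``no fixed point above $1$'' versus ``the orbit is trapped below the smaller fixed point $x_{-}$.'' Once the strict convexity of $g$ and the strict positivity $g(1) > 0$ are in hand, the attractivity of $x_{-}$ from below and the repelling role of $x_{+}$ follow immediately from the monotonicity of $f$, so no delicate estimates remain.
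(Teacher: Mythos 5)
Your proof is correct and follows essentially the same route as the paper's: establish that $f$ is increasing and the orbit is monotone, use convexity of $g(x)=f(x)-x$ (the paper phrases this as $g'$ having at most one zero) to limit $f$ to at most two fixed points on $[1,\infty)$, and then either rule out any finite limit (case (a)) or trap the orbit below the smaller fixed point $x_{-}$ (case (b)). If anything, your treatment of case (a) via ``an increasing sequence with no fixed point above it must diverge'' is cleaner than the paper's intermediate claim that $f(x)>x$ forces $f'(x)>1$ on $[1,\infty)$, which is not needed for the conclusion.
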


\begin{proof}
First, we have $f'(x)>0$ for $x\geq 1$. Since $\alpha+\beta>1$, we have $\chi_{1}>\chi_{0}=1$ and then $\{\chi_{n}\}$ is increasing.
 For $x_{+}$ does not exist or $x_{+}<1$, we have $f(x)>x$ for $x\geq 1$ and then $f'(x)>1$ for $x\geq 1$, which implies $\chi_{n+2}-\chi_{n+1}=f(\chi_{n+1})-f(\chi_{n})>\chi_{n+1}-\chi_{n}$ for $n\geq 0$. Hence,
 $\chi_{n}\rightarrow\infty$ as $n\rightarrow\infty$.

Let $g(x)=f(x)-x$. It can be verified that there exists at most one real root of $g'(x)=0$ on $[1,\infty)$, and then there exist at most two real roots of $g(x)=0$ on $[1,\infty)$. When $x_{+}\geq1$, $x_{-}$ is the real root of $f(x)=x$ that is closest to $1$ on $[1,\infty)$. Since $\chi_{n}$ is increasing, it can be shown that $\chi_{n}$ approach $x_{-}$ as $n\rightarrow\infty$. The proof is complete.

\end{proof}

Now, for $b>a$, the following theorem provides a complete classification of whether $h(\mathcal{T}_{M(a,b;l)})=\log b$.
\begin{theorem}
\label{theorem:3.0.2}
Suppose $M=M(a,b;l)$ is defined as (\ref{eqn:3.1}) with $b>a$  and $\mathcal{T}_{M}$ is defined on $d$-tree, $d\geq 2$. Let the maximal real root of $f(x)\equiv\left(\frac{a}{b}x+\frac{l}{b}\right)^{d}=x$ be $x_{+}$.
\begin{itemize}
\item[(a)] When $a+l\leq b$, $h(\mathcal{T}_{M})=\log b$.
\item[(b)] When $a+l>b$,
\begin{itemize}
\item[(i)]if $x_{+}$ does not exist or $x_{+}<1$, then $h(\mathcal{T}_{M})>\log b$,
\item[(ii)]if $x_{+}\geq 1$, then  $h(\mathcal{T}_{M})=\log b$.
\end{itemize}
\end{itemize}
\end{theorem}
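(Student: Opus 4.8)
The plan is to compute $p(n)$ directly by exploiting the block-upper-triangular structure of $M$. Label the alphabet so that $\{1,\dots,a\}$ are the ``$E_a$-states'' and $\{a+1,\dots,a+b\}$ the ``$E_b$-states''. Since the lower-left block is $O$ and the lower-right block is $E_b$, once a node carries an $E_b$-state every node below it must carry an $E_b$-state too, and all such transitions are admissible; hence an $n$-block rooted at a fixed $E_b$-state is an arbitrary $E_b$-labelling of the remaining $|\Delta_n|-1$ nodes, giving $B(n):=b^{|\Delta_n|-1}$. For the $E_a$-states, the all-ones block $E_a$ together with the constant row sum $l$ of $R_l$ make all $a$ roots interchangeable for counting, so let $A(n)$ denote the number of $n$-blocks rooted at a fixed $E_a$-state. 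Each of the $d$ children may independently be any of the $a$ $E_a$-states (continuing an $A(n-1)$-subtree) or any of the $l$ admissible $E_b$-states (spawning a full $B(n-1)$-subtree), which yields
\begin{equation*}
A(n)=\bigl(aA(n-1)+lB(n-1)\bigr)^{d},\qquad A(0)=1,
\end{equation*}
together with $p(n)=aA(n)+bB(n)=b^{|\Delta_n|-1}\bigl(a\chi_n+b\bigr)$, where $\chi_n:=A(n)/B(n)$.

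Next I would reduce to the scalar system of Lemma~\ref{lemma:3.0.1}. Using $B(n)=(bB(n-1))^{d}$, dividing the recursion for $A(n)$ by $B(n)$ gives exactly
\begin{equation*}
\chi_n=\Bigl(\tfrac{a}{b}\chi_{n-1}+\tfrac{l}{b}\Bigr)^{d}=f(\chi_{n-1}),\qquad \chi_0=1,
\end{equation*}
so $\{\chi_n\}$ is the orbit in Lemma~\ref{lemma:3.0.1} with $\alpha=a/b>0$, $\beta=l/b\ge0$, and $\alpha+\beta=(a+l)/b$. Because the limit defining $h(\mathcal{T}_M)$ exists and $\tfrac{\log p(n)}{|\Delta_n|}=\tfrac{|\Delta_n|-1}{|\Delta_n|}\log b+\tfrac{\log(a\chi_n+b)}{|\Delta_n|}$, the whole statement reduces to tracking $\chi_n$: if $\{\chi_n\}$ is bounded the second term vanishes and $h(\mathcal{T}_M)=\log b$, while if $\chi_n\to\infty$ I must still establish that the residual term has a strictly positive liminf.

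For part (a), $a+l\le b$ means $\alpha+\beta\le1$; then $f$ is increasing with $f(1)\le1$, so induction gives $\chi_n\le1$ for all $n$, the orbit is bounded, and $h(\mathcal{T}_M)=\log b$. For part (b), $\alpha+\beta>1$ places me under the hypotheses of Lemma~\ref{lemma:3.0.1}. In case (ii), $x_+\ge1$ forces $\chi_n\nearrow x_-<\infty$ by part (b) of the lemma, so $\{\chi_n\}$ is bounded and again $h(\mathcal{T}_M)=\log b$. In case (i) the lemma yields $\chi_n\to\infty$, and the remaining work is a rate estimate: iterating the crude bound $\chi_n\ge\alpha^{d}\chi_{n-1}^{d}$ down to a fixed index $N$ gives
\begin{equation*}
\log\chi_n\ge d^{\,n-N}\Bigl(\log\chi_N-\tfrac{d}{d-1}\log\tfrac{b}{a}\Bigr)-\tfrac{d\log\alpha}{d-1},
\end{equation*}
and choosing $N$ with $\chi_N>(b/a)^{d/(d-1)}$ (possible since $\chi_n\to\infty$) makes the bracket positive, so $\log\chi_n$ grows like $d^{\,n}$, i.e. $\liminf_n\tfrac{\log\chi_n}{|\Delta_n|}>0$, whence $h(\mathcal{T}_M)>\log b$.

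The main obstacle is exactly this last step in case (b)(i): the mere divergence $\chi_n\to\infty$ supplied by Lemma~\ref{lemma:3.0.1} is insufficient, because the entropy compares $\chi_n$ against the doubly-exponential reference $b^{|\Delta_n|}$; I must verify that the orbit escapes at the full doubly-exponential rate $\chi_n\gtrsim c^{\,d^{\,n}}$ for some $c>1$, and the threshold $(b/a)^{d/(d-1)}$ is precisely what certifies this. Everything else—the two counting recursions, the reduction to $f$, and the two bounded cases—is routine once the symmetry among the $E_a$- and $E_b$-states has been justified.
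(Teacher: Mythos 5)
Your proposal is correct and follows essentially the same route as the paper: the same recursion $p_{E_a\text{-root}}(n)=(a\,p_{E_a}(n-1)+l\,p_{E_b}(n-1))^d$ with $p_{E_b}(n)=b^{|\Delta_n|-1}$, the same ratio $\chi_n$ governed by $f$ and Lemma~\ref{lemma:3.0.1}, and the same doubly-exponential escape estimate via $\chi_{n}\ge(\tfrac{a}{b}\chi_{n-1})^d$ with the threshold $(b/a)^{d/(d-1)}$. The only cosmetic differences are that you derive the recursion by direct counting rather than citing the Petersen--Salama formula, and you handle case (a) by showing $\chi_n\le 1$ instead of the paper's direct sandwich $b^{|\Delta_n|}\le p(n)\le(a+b)b^{|\Delta_n|}$.
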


\begin{proof}
First, if $a+l\leq b$, the estimation $b^{|\Delta_{n}|}\leq p(n)\leq (a+b)\cdot b^{|\Delta_{n}|}$ can be verified. Then, $h(\mathcal{T}_{M})=\log b$ follows immediately.

For $a+l> b$, it follows from \cite{petersen2018tree} that for $1\leq i\leq a+b$ and $n\geq 0$, we have

\begin{equation*}
p_{i}(n+1)=\footnotesize{\left(M\mathbf{p}(n)\right)_{i}^{d}}
\end{equation*}
where $\mathbf{p}(n)=\left[p_{1}(n), p_{2}(n), \cdots, p_{a+b}(n)\right]^{T}$.
By the definition of $M(a,b;l)$, $p_{1}(n)=p_{2}(n)=\cdots =p_{a}(n)$ and $p_{a+1}(n)=p_{a+2}(n)=\cdots =p_{a+b}(n)$ for $n\geq 1$. Then, it can be verified that
\begin{equation*}
 p_{i}(n+1)=
 \left\{
 \begin{array}{rl}
  \left(a p_{a}(n)+lp_{a+b}(n)\right)^{d} & \text{ for } 1\leq i\leq a, \\
   & \\
    \left(b p_{a+b}(n)\right)^{d}=b^{|\Delta_{n+1}|-1} & \text{ for } a+1\leq i\leq a+b,
 \end{array}
 \right.
\end{equation*}
with $p_{i}(0)=1$ for all $1\leq i\leq a+b$. Also,
\begin{equation*}
p(n) = ap_{a}(n)+b p_{a+b}(n)=ap_{a}(n)+b^{|\Delta_{n}|}.
\end{equation*}
Hence,
\begin{equation}\label{eqn:2.3.2}
h(\mathcal{T}_{M})=\max\left\{\underset{m\rightarrow\infty}{\limsup}\frac{\log p_{a}(m)}{|\Delta_{m}|}, \log b\right\}.
\end{equation}

Let $\chi_{n}\equiv p_{a}(n)/p_{a+b}(n)=p_{a}(n)/b^{|\Delta_{n}|-1}$ for $n\geq 0$. Thus, we have
\begin{equation*}
\chi_{n+1}=\left(\frac{a}{b}\chi_{n}+\frac{l}{b}\right)^{2}
\end{equation*}
with $\chi_{0}=1$. First, if $\{\chi_{n}\}$ is bounded, it can be proven that
\begin{equation*}
\underset{m\rightarrow\infty}{\limsup}\frac{\log p_{a}(m)}{|\Delta_{m}|}=\underset{m\rightarrow\infty}{\limsup}\frac{\log p_{a+b}(m)+\log \chi_{m}}{|\Delta_{m}|}= \log b.
\end{equation*}
Then, $h(\mathcal{T}_{M})=\log b$.

Secondly, we aim to show that if $\chi_{n}\rightarrow\infty$ as $n\rightarrow\infty$, then $h(\mathcal{T}_{M})>\log b$.
It is clear that $\chi_{n+1}\geq \left(\frac{a}{b}\chi_{n}\right)^{d}$, which implies that for $m\geq 1$,
\begin{equation*}
\chi_{n+m}\geq (a/b)^{(d^{m+1}-d)/(d-1)}\cdot \chi_{n}^{d^{m}}.
\end{equation*}
Then, for $n\geq 1$, it can be verified that
\begin{equation}\label{eqn:2.3.3-0}
\begin{array}{rcl}
\underset{m\rightarrow\infty}{\limsup}\hspace{0.2cm}\frac{\log \chi_{n+m}}{|\Delta_{n+m}|}
& \geq d^{-n-1} ((d-1)\log \chi_{n}-d\log(b/a)).
\end{array}
\end{equation}
Therefore, for $n\geq 1$,
\begin{equation*}
\begin{array}{rcl}
\underset{m\rightarrow\infty}{\limsup}\frac{\log p_{a}(m)}{|\Delta_{m}|} & = & \underset{m\rightarrow\infty}{\limsup}\frac{\log p_{a+b}( m)\cdot\chi_{m}}{|\Delta_{m}|}\\
& & \\
& = & \log b + \underset{m\rightarrow\infty}{\limsup}\frac{\log \chi_{m}}{|\Delta_{m}|}\\
& & \\
& = & \log b + \underset{m\rightarrow\infty}{\limsup}\frac{\log \chi_{n+m}}{|\Delta_{n+m}|}\\
 & & \\
 & \geq & \log b + d^{-n-1} ((d-1)\log \chi_{n}-d\log(b/a)),
\end{array}
\end{equation*}
which yields if $\chi_{n}\rightarrow\infty$ as $n\rightarrow \infty$, then
\begin{equation*}
h(\mathcal{T}_{M})=\underset{m\rightarrow\infty}{\limsup}\frac{\log p_{a}(m)}{|\Delta_{m}|}>\log b.
\end{equation*}
Applying Lemma \ref{lemma:3.0.1} yields Case(b), which completes the proof.

\end{proof}
Notably, by (\ref{eqn:2.3.2}) and (\ref{eqn:2.3.3-0}), if there exist $N\geq 1$ such that $\chi_{N}>(b/a)^{d/(d-1)}$, then $h(\mathcal{T}_{M})>\log b$; otherwise, $h(\mathcal{T}_{M})=\log b$. Table 1 provides numerical results for each case in Theorem \ref{theorem:3.0.2}.

\begin{center}
\begin{tabular}{|c|c|c|c|c|c|c|c|}
\hline
 Case in Thm \ref{theorem:3.0.2} & d &  a & b & l & $x_{+}$ & $h(\mathcal{T}_{M})$ & $\log b$\\
\hline
(a) & 3 & 2 & 3 & 1 & 1 & 0.477121 & 0.477121 \\
\hline
(b)(i)  & 3 & 2 & 3 & 2 & -3.21353 & 0.538423 & 0.477121 \\
\hline
(b)(ii) & 3 & 2 & 4 & 1 & 2.03407 & 0.60206& 0.60206
 \\
\hline
\end{tabular}
\end{center}
\begin{equation*}
\text{Table 1. }
\end{equation*}

Table 2 is a list of $M=M(1,5;5)$ whose $h(\mathcal{T}_{M})=\log 5$ for $d=2$ and $h(\mathcal{T}_{M})>\log 5$ for $3\leq d\leq 5$ numerically.
\begin{center}
\begin{tabular}{|c|c|c|c|c|c|c|c|}
\hline
 Case in Thm \ref{theorem:3.0.2} & d &  a & b & l & $x_{+}$ & $h(\mathcal{T}_{M})$ & $\log b$\\
\hline
(b)(ii) & 2 & 1 & 5 & 5 & 13.09017 & 0.69897 & 0.69897  \\
\hline
(b)(i)  & 3 & 1 & 5 & 5 & -18.136825 & 0.699188 & 0.69897 \\
\hline
(b)(i) & 4 & 1 & 5 & 5 & does not exist & 0.702073 & 0.69897 \\
\hline
(b)(i) & 5 & 1 & 5 & 5 & -13.40247 & 0.706288 & 0.69897 \\
\hline
\end{tabular}
\end{center}
\begin{equation*}
\text{Table 2.}
\end{equation*}
Notably, we conjecture that for a given binary matrix $M$, $h(\mathcal{T}_{M})$ increases with $d$. Up to now, it is still open.

In the following, a more detailed discussion for the case $d=2$ of Theorem \ref{theorem:3.0.2} is provided.

\begin{theorem}
\label{theorem:3.2}
Suppose $M=M(a,b;l)$ is defined as (\ref{eqn:3.1}) with $b>a$ and $\mathcal{T}_{M}$ is defined on $2$-tree. When $a+l\leq b$, $h(\mathcal{T}_{M})=\log b$. When $a+l>b$,
\begin{itemize}
\item[(a)]if $b^{2}-4al<0$, then $h(\mathcal{T}_{M})>\log b$,
\item[(b)]if $b^{2}-4al>0$
 \begin{itemize}
 \item[(i)] and $b^{2}-2al-2a^{2}<0$, then $h(\mathcal{T}_{M})>\log b$,
 \item[(ii)] and $b^{2}-2al-2a^{2}\geq0$, then $h(\mathcal{T}_{M})=\log b$,
 \end{itemize}
\item[(c)]if $b^{2}-4al=0$
\begin{itemize}
 \item[(i)] and $a\leq l$, then $h(\mathcal{T}_{M})=\log b$,
 \item[(ii)] and $a>l$, $h(\mathcal{T}_{M})>\log b$.
 \end{itemize}
\end{itemize}
\end{theorem}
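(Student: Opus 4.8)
The plan is to deduce Theorem \ref{theorem:3.2} from Theorem \ref{theorem:3.0.2} by making the root-location analysis of the equation $f(x)=x$ completely explicit in the case $d=2$. The regime $a+l\le b$ is covered verbatim by Theorem \ref{theorem:3.0.2}(a), giving $h(\mathcal{T}_M)=\log b$, so I would immediately reduce to $a+l>b$ and concentrate on locating the maximal real root $x_{+}$ of $f(x)\equiv\left(\frac{a}{b}x+\frac{l}{b}\right)^{2}=x$ relative to $1$. Clearing denominators, $f(x)=x$ is equivalent to $(ax+l)^{2}=b^{2}x$, i.e. to $\phi(x)\equiv a^{2}x^{2}+(2al-b^{2})x+l^{2}=0$, whose discriminant is $(2al-b^{2})^{2}-4a^{2}l^{2}=b^{2}(b^{2}-4al)$; its sign is exactly that of $b^{2}-4al$, which produces the trichotomy of the theorem.

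Case (a) is then immediate: when $b^{2}-4al<0$ there is no real root, so $x_{+}$ does not exist and Theorem \ref{theorem:3.0.2}(b)(i) gives $h(\mathcal{T}_M)>\log b$. The substance lies in Case (b), where $b^{2}-4al>0$ and $\phi$ has two distinct real roots. Here the key observation is that $a+l>b$ forces $\phi(1)=(a+l)^{2}-b^{2}=(a+l-b)(a+l+b)>0$; since $\phi$ opens upward, $x=1$ lies outside the interval bounded by the two roots, so both roots sit on the same side of $1$. Which side is decided by the vertex abscissa $x^{\ast}=\frac{b^{2}-2al}{2a^{2}}$: both roots exceed $1$ precisely when $x^{\ast}>1$, i.e. $b^{2}-2al-2a^{2}>0$, and both fall below $1$ precisely when $b^{2}-2al-2a^{2}<0$. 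In the former subcase $x_{+}>1$, and Theorem \ref{theorem:3.0.2}(b)(ii) yields $h(\mathcal{T}_M)=\log b$; in the latter $x_{+}<1$, and Theorem \ref{theorem:3.0.2}(b)(i) yields $h(\mathcal{T}_M)>\log b$.

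For Case (c), the tangential case $b^{2}-4al=0$, the parabola $\phi$ has the double root $x_{+}=\frac{b^{2}-2al}{2a^{2}}=\frac{l}{a}$ (using $b^{2}=4al$), and since $x_{+}\ge 1\iff l\ge a$, Theorem \ref{theorem:3.0.2}(b)(ii) applies when $a\le l$ (equality) while Theorem \ref{theorem:3.0.2}(b)(i) applies when $a>l$ (strict inequality). I would also record, for consistency of the statement, that the borderline $b^{2}-2al-2a^{2}=0$ cannot occur within Case (b): it would place the vertex at $x^{\ast}=1$, whence the minimum value $\phi(x^{\ast})=\phi(1)>0$, contradicting the existence of real roots; this is exactly why the inequality in (b)(ii) may be written $\ge 0$ without changing the content.

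The only delicate point I anticipate is the root-location bookkeeping in Case (b): one must combine the sign of $\phi(1)$ with the position of the vertex to pin down whether $x_{+}$ lies above or below $1$, and then verify that subcases (i) and (ii) are exhaustive and disjoint, in particular that the excluded boundary $b^{2}-2al-2a^{2}=0$ is genuinely vacuous in the relevant regime. Once this elementary quadratic analysis is in place, each branch of the classification is matched to the corresponding branch of Theorem \ref{theorem:3.0.2} and the proof is complete.
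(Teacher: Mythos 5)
Your proposal is correct and follows essentially the same route as the paper: both reduce the statement to Theorem \ref{theorem:3.0.2} and carry out an elementary analysis of the quadratic $(ax+l)^{2}=b^{2}x$, the paper via the explicit root formula $x=\frac{b^{2}-2al\pm b\sqrt{b^{2}-4al}}{2a^{2}}$ and you via the sign of $\phi(1)=(a+l)^{2}-b^{2}$ combined with the vertex location, which is an equivalent bookkeeping. Your added observation that the boundary $b^{2}-2al-2a^{2}=0$ is vacuous under $a+l>b$ and $b^{2}-4al>0$ is a correct refinement of the paper's (harmless but imprecise) remark that $x=1$ then lies between the two roots.
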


\begin{proof}
Let $f(x)=\left(\frac{a}{b}x+\frac{l}{b}\right)^{2}$.
By solving $f(x)=x$, we have
\begin{equation*}
x=\frac{b^{2}-2al\pm b\sqrt{b^{2}-4al}}{2a^{2}}.
\end{equation*}
In Case (a), it is clear that $x_{+}$ does not exist.
In Case (b), there are exactly two real roots of $f(x)=x$. By making further discussing, if $b^{2}-2al-2a^{2}<0$, $x=1$ is on the right hand side of both real roots; if $b^{2}-2al-2a^{2}=0$,  $x=1$ is between the two real roots; if $b^{2}-2al-2a^{2}>0$, $x=1$ is on the left hand side of both two real roots. In Case (c), the only real root of $f(x)=x$ is $x=x_{+}=l/a$. We have that when $a\leq l$, $x_{+}\geq 1$, and when $a>l$, $x_{+}<1$.

Therefore, the results in this theorem follow by Theorem \ref{theorem:3.0.2}. The proof is complete.
\end{proof}

Table 3 provides numerical results of each case in Theorem \ref{theorem:3.2}.
\begin{center}
\begin{tabular}{|c|c|c|c|c|c|}
\hline
 Case in Thm \ref{theorem:3.2} &  a & b & l & $h(\mathcal{T}_{M})$ & $\log b$\\
\hline
(a) & 2 & 3 & 2 & 0.517166 & 0.477121 \\
\hline
(b)(i) & 2 & 3 & 1 & 0.477121&  0.477121 \\
\hline
(b)(ii) & 2 & 4 & 1 & 0.60206 &  0.60206\\
\hline
(c)(i) & 2 & 4 & 2  & 0.60206 & 0.60206 \\
\hline
(c)(ii) & 9 & 12 & 4 & 1.099264 & 1.079181  \\
\hline
\end{tabular}
\end{center}
\begin{equation*}
\text{Table 3.}
\end{equation*}

\subsection{Reducible $M$ with more irreducible components}

 In this section, we extend the results of reducible matrix $M$ with two irreducible components to general cases, that is, the reducible matrix $M$ has $q\geq 2$ irreducible components. First, we set up notation and terminology.
For $q\geq 2$, let $\mathbf{a}_{q}=(a_{1},a_{2},\cdots,a_{q})$ with $a_{q}> a_{i}\geq1$ for $1\leq i\leq q-1$, and $\mathbf{l}_{q}=\{l_{i,j}:i+1\leq j\leq q,1\leq i\leq q-1 \}$ with $0\leq l_{i,j}\leq a_{j}$. We consider the reducible matrix

\begin{equation}\label{eqn:2.2.1}
M=M(\mathbf{a}_{q};\mathbf{l}_{q})=
\left[
\begin{array}{ccccc}
E_{a_{1}} & R_{l_{1,2}} &  R_{l_{1,3}}  &  \cdots &  R_{l_{1,q}}\\
 & & & & \\
O & E_{a_{2}} &  R_{l_{2,3}}  &  \cdots &  R_{l_{2,q}}\\
 & & & & \\
O & O & E_{a_{3}} &  \cdots &  R_{l_{3,q}}\\
\vdots & \vdots & \vdots & \ddots & \vdots \\
O & O & O&  \cdots &  E_{a_{q}}
\end{array}
\right]
\end{equation}
with $q$ irreducible components and assume $\mathcal{T}_{M}$ is hom Markov tree-shift on $d$-tree, in which $R_{l_{i,j}}$ is a matrix all of whose row sums are $l_{i,j}$.
Let $b_{0}=0$ and $b_{j}=\Sigma_{i=1}^{j}a_{i}$, $1\leq j\leq q$. From \cite{petersen2018tree}, we have $p_{i}(0)=1$, $1\leq i\leq b_{q}$, and for $n\geq 1$, $p_{b_{i}+1}(n)=p_{b_{i}+2}(n)=\cdots=p_{b_{i+1}}(n)$, $0 \leq i\leq q-1$. More precisely, for $1 \leq i\leq q$,
\begin{equation}\label{eqn:2.2.2}
p_{b_{i}}(n+1)=\left(a_{i}p_{b_{i}}(n)+\overset{q}{\underset{j=i+1}{\sum}} l_{i,j}\cdot p_{b_{j}}(n)\right)^{d},
\end{equation}
for $n\geq 0$. In particular, $p_{b_{q}}(n)=a_{q}^{|\Delta_{n}|-1}$ for $n\geq 0$.
Therefore,
\begin{equation}\label{eqn:2.2.3}
h(\mathcal{T}_{M})=\underset{1\leq i\leq q}{\max}\left\{\underset{m\rightarrow\infty}{\limsup}\frac{\log p_{b_{i}}(m)}{|\Delta_{m}|}\right\},
\end{equation}
where
\begin{equation*}
\underset{m\rightarrow\infty}{\limsup}\frac{\log p_{b_{q}}(m)}{|\Delta_{m}|}=\log a_{q}.
\end{equation*}

Given $1 \leq i\leq q-1$, let
\begin{equation*}
\chi^{(i)}_{n}\equiv \frac{ p_{b_{i}}(n)}{p_{b_{q}}(n)}= \frac{ p_{b_{i}}(n)}{a_{q}^{|\Delta_{n}|-1}},
\end{equation*}
for $n\geq 0$. Clearly, $\chi^{(i)}_{0}=1$ for $1 \leq i\leq q-1$. It can be verified that for $1\leq i\leq q-1$,
\begin{equation}\label{eqn:2.2.4-0}
\chi^{(i)}_{n+1}=\frac{1}{a_{q}^{d}}\left(a_{i}\chi^{(i)}_{n}+\overset{q}{\underset{j=i+1}{\sum}} l_{i,j}\chi_{n}^{(j)}\right)^{d},
\end{equation}
for $n\geq 0$.

In the following theorem, a necessary and sufficient condition for determining whether $h(\mathcal{T}_{M(\mathbf{a}_{q};\mathbf{l}_{q})})=\log a_{q}$ is provided.
\begin{theorem}
\label{theorem:3.0.2-0}
Suppose $M=M(\mathbf{a}_{q};\mathbf{l}_{q})$, $q\geq 2$, is defined as (\ref{eqn:2.2.1}) and $\mathcal{T}_{M}$ is defined on $d$-tree. If there exist $1\leq i\leq q-1$ and $n\geq 1$ such that $\chi^{(i)}_{n}>(a_{q}/a_{i})^{d/(d-1)}$, then $h(\mathcal{T}_{M})>\log a_{q}$; otherwise, $h(\mathcal{T}_{M})=\log a_{q}$.
\end{theorem}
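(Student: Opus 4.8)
The plan is to reduce the $q$-component statement to the single-variable estimates already established for $q=2$, using the key point that the threshold condition is a one-sided dynamic test that is insensitive to the coupling among the sequences $\{\chi^{(i)}_n\}_{n\ge 0}$.

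First I would recast the entropy in terms of the normalized quantities $\chi^{(i)}_n$. Since $\log p_{b_i}(m)=\log\chi^{(i)}_m+(|\Delta_m|-1)\log a_q$, dividing by $|\Delta_m|$ and passing to the limit superior turns (\ref{eqn:2.2.3}) into
\[
h(\mathcal{T}_M)=\log a_q+\max_{1\le i\le q}\ \limsup_{m\to\infty}\frac{\log\chi^{(i)}_m}{|\Delta_m|},
\]
where the term $i=q$ equals $0$ because $\chi^{(q)}_m\equiv 1$, so the maximum is always $\ge 0$. Hence $h(\mathcal{T}_M)>\log a_q$ exactly when $\limsup_m\frac{\log\chi^{(i)}_m}{|\Delta_m|}>0$ for some $i\le q-1$, and $h(\mathcal{T}_M)=\log a_q$ exactly when this limit is $\le 0$ for every $i\le q-1$. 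The whole problem is thereby reduced to detecting positivity of these componentwise limits.

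The decisive estimate is a decoupled lower bound for the recursion (\ref{eqn:2.2.4-0}). Since every $l_{i,j}\ge 0$ and every $\chi^{(j)}_n\ge 0$, discarding the off-diagonal sum gives $\chi^{(i)}_{n+1}\ge\bigl(\tfrac{a_i}{a_q}\chi^{(i)}_n\bigr)^{d}$. Iterating this scalar inequality from a fixed index $n$, exactly as in the proof of Theorem \ref{theorem:3.0.2}, yields $\chi^{(i)}_{n+m}\ge(a_i/a_q)^{(d^{m+1}-d)/(d-1)}\bigl(\chi^{(i)}_n\bigr)^{d^m}$, and therefore the analogue of (\ref{eqn:2.3.3-0}),
\[
\limsup_{m\to\infty}\frac{\log\chi^{(i)}_m}{|\Delta_m|}\ \ge\ d^{-n-1}\bigl((d-1)\log\chi^{(i)}_n-d\log(a_q/a_i)\bigr),
\]
whose right-hand side is strictly positive precisely when $\chi^{(i)}_n>(a_q/a_i)^{d/(d-1)}$. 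This is where the coupling is neutralized: the off-diagonal terms can only enlarge $\chi^{(i)}$, so discarding them is harmless for a lower bound and no analysis of the full coupled fixed-point system of (\ref{eqn:2.2.4-0}) is required.

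Both directions now follow. If some pair $(i,n)$ with $i\le q-1$ satisfies $\chi^{(i)}_n>(a_q/a_i)^{d/(d-1)}$, the displayed bound forces $\limsup_m\frac{\log\chi^{(i)}_m}{|\Delta_m|}>0$, whence $h(\mathcal{T}_M)>\log a_q$. Conversely, if no such pair exists, then $\chi^{(i)}_n\le(a_q/a_i)^{d/(d-1)}$ for all $i$ and $n$; each sequence is then bounded, so every $\limsup_m\frac{\log\chi^{(i)}_m}{|\Delta_m|}\le 0$ and the entropy formula gives $h(\mathcal{T}_M)=\log a_q$. The main obstacle is conceptual rather than computational: for $q>2$ the recursions (\ref{eqn:2.2.4-0}) are genuinely coupled, so one cannot simply invoke the fixed-point dichotomy of Lemma \ref{lemma:3.0.1} componentwise as was done for the single recursion in Theorem \ref{theorem:3.0.2}. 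The threshold formulation circumvents this, because crossing the threshold at a single step already commits the $i$-th component to blow up through the decoupled bound, while never crossing it is by definition boundedness; the assumption $a_q>a_i$ guarantees each threshold $(a_q/a_i)^{d/(d-1)}$ exceeds $1$, so the test is non-vacuous.
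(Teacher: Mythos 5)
Your proposal is correct and follows essentially the same route as the paper: both discard the nonnegative off-diagonal terms in \eqref{eqn:2.2.4-0} to get the decoupled bound $\chi^{(i)}_{n+1}\ge\bigl(\tfrac{a_i}{a_q}\chi^{(i)}_n\bigr)^{d}$, iterate it to obtain the analogue of \eqref{eqn:2.3.3-0}, and combine with \eqref{eqn:2.2.3}. The only difference is presentational: you spell out the ``otherwise'' direction (never crossing the threshold means each $\chi^{(i)}_n$ is bounded, hence every $\limsup$ is nonpositive), which the paper leaves as ``the result follows straightforwardly.''
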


\begin{proof}
By (\ref{eqn:2.2.4-0}), for $1\leq i\leq q$, it is seen that $\chi^{(i)}_{n+1}\geq\left(\frac{a_{i}}{a_{q}}\chi_{n}\right)^{d}$, $n\geq 0$.
As (\ref{eqn:2.3.3-0}), it can be established that
\begin{equation}\label{eqn:2.2.4-1}
\begin{array}{rcl}
\underset{m\rightarrow\infty}{\limsup}\hspace{0.2cm}\frac{\log \chi^{(i)}_{n+m}}{|\Delta_{n+m}|}
& \geq d^{-n-1} \left[(d-1)\log \chi^{(i)}_{n}-d\log(a_{q}/a_{i})\right].
\end{array}
\end{equation}
Then, for $1\leq i\leq q-1$ and $n\geq 1$, we have
\begin{equation*}
\begin{array}{rcl}
\underset{m\rightarrow\infty}{\limsup}\frac{\log p_{b_{i}}(m)}{|\Delta_{m}|} & = & \underset{m\rightarrow\infty}{\limsup}\frac{\log p_{b_{q}}( m)\cdot\chi^{(i)}_{m}}{|\Delta_{m}|}\\
& & \\
& = & \log a_{q} + \underset{m\rightarrow\infty}{\limsup}\frac{\log \chi^{(i)}_{m}}{|\Delta_{m}|}\\
& & \\
& = & \log a_{q} + \underset{m\rightarrow\infty}{\limsup}\frac{\log \chi^{(i)}_{n+m}}{|\Delta_{n+m}|}\\
 & & \\
 & \geq & \log a_{q} + d^{-n-1}\left[(d-1)\log \chi^{(i)}_{n}-d\log(a_{q}/a_{i})\right].
\end{array}
\end{equation*}
Therefore,  by (\ref{eqn:2.2.3}), the result follows straightforwardly. The proof is complete.
\end{proof}

In the following, we consider the reducible matrix $M$ as in (\ref{eqn:2.2.1}) with $q=3$. Let
\begin{equation*}
M=
\left[
\begin{array}{ccc}
E_{a_{1}} & R_{l_{1,2}} & R_{l_{1,3}} \\
O & E_{a_{2}} & R_{l_{2,3}} \\
O & O & E_{a_{3}}
\end{array}
\right].
\end{equation*}
In particular, when $a_{1}+l_{1,2}+l_{1,3}>a_{3}$ and $a_{2}+l_{2,3}>a_{3}$, by Lemma \ref{lemma:3.0.1}, both $\{\chi^{(1)}_{n}\}$ and $\{\chi^{(2)}_{n}\}$ are increasing, and then it will be simpler to determine whether $h(\mathcal{T}_{M})=\log a_{3}$ or not. We provide more explicit and checkable conditions in the following theorem.

\begin{theorem}
\label{theorem:3.0.2-1}
Suppose $M=M(a_{1},a_{2},a_{3};l_{1,2},l_{1,3},l_{2,3})$ and $\mathcal{T}_{M}$ is defined on $2$-tree.
Let the largest real root and second largest real root of $f(x)\equiv\left(\frac{a_{2}}{a_{3}}x+\frac{l_{2,3}}{a_{3}}\right)^{2}=x$ on $[1,\infty)$ be $x_{+}$ and $x_{-}$ respectively (if $x_{+}$ exists but $x_{-}$ does not exist, let $x_{-}=x_{+}$). For $a_{1}+l_{1,2}+l_{1,3}>a_{3}$ and $a_{2}+l_{2,3}>a_{3}$,
\begin{itemize}
\item[(a)] when $x_{+}$ does not exist or $x_{+}<1$, $h(\mathcal{T}_{M})>\log a_{3}$.
\item[(b)] when $x_{+}\geq 1$, let the maximal real root of
\begin{equation*}
g(x)\equiv\left(\frac{a_{1}}{a_{3}}x+\frac{l_{1,2}}{a_{3}}x_{-}+\frac{l_{1,3}}{a_{3}}\right)^{2}=x
\end{equation*}
be $x'_{+}$,
\begin{itemize}
\item[(i)]if $x'_{+}$ does not exist or $x'_{+}<1$, then $h(\mathcal{T}_{M})>\log a_{3}$,
\item[(ii)]if $x'_{+}\geq 1$, then  $h(\mathcal{T}_{M})=\log a_{3}$.
\end{itemize}
\end{itemize}
\end{theorem}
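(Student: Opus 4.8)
The plan is to exploit the triangular, hence partially decoupled, structure of the ratio recurrences \eqref{eqn:2.2.4-0} together with the threshold criterion of Theorem \ref{theorem:3.0.2-0}. Specializing \eqref{eqn:2.2.4-0} to $q=3$, $d=2$ (and using $\chi^{(3)}_n\equiv 1$), the middle sequence evolves autonomously,
\[
\chi^{(2)}_{n+1} = \left(\tfrac{a_2}{a_3}\chi^{(2)}_n + \tfrac{l_{2,3}}{a_3}\right)^2 = f(\chi^{(2)}_n), \qquad \chi^{(2)}_0 = 1,
\]
while the first is driven by the second,
\[
\chi^{(1)}_{n+1} = \left(\tfrac{a_1}{a_3}\chi^{(1)}_n + \tfrac{l_{1,2}}{a_3}\chi^{(2)}_n + \tfrac{l_{1,3}}{a_3}\right)^2, \qquad \chi^{(1)}_0 = 1.
\]
The hypothesis $a_2+l_{2,3}>a_3$ is exactly the condition $\alpha+\beta>1$ of Lemma \ref{lemma:3.0.1} applied to $f$, and $a_1+l_{1,2}+l_{1,3}>a_3$ forces $\chi^{(1)}_1>1$; a short induction then shows $\chi^{(i)}_n\ge 1$ throughout. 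By Theorem \ref{theorem:3.0.2-0}, it suffices to decide, for each $i\in\{1,2\}$, whether $\{\chi^{(i)}_n\}$ stays below the threshold $(a_3/a_i)^2$ (equivalently, stays bounded) or eventually exceeds it (equivalently, diverges to $\infty$).

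First I would dispose of the middle sequence by applying Lemma \ref{lemma:3.0.1} directly to $f$. If $x_{+}$ does not exist or $x_{+}<1$, then $\chi^{(2)}_n\to\infty$, so it eventually passes $(a_3/a_2)^2$ and Theorem \ref{theorem:3.0.2-0} gives $h(\mathcal{T}_M)>\log a_3$; this is Case (a). If $x_{+}\ge 1$, Lemma \ref{lemma:3.0.1} gives the monotone convergence $\chi^{(2)}_n\nearrow x_{-}$, which is the standing assumption of Case (b); here $x_->1$ and $f(x_-)=x_-$ forces $(\tfrac{a_2}{a_3}x_-)^2<x_-$, i.e.\ $x_-<(a_3/a_2)^2$, so $\chi^{(2)}$ never crosses its own threshold.

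For Case (b) the real work is the analysis of $\chi^{(1)}$, for which the auxiliary map $g$ of the statement is precisely the recurrence obtained by freezing $\chi^{(2)}_n$ at its limit $x_-$. Since $\chi^{(2)}_n<x_-$ for every $n$ and $g$ is increasing on $[1,\infty)$, I would establish the upper comparison $\chi^{(1)}_{n+1}<g(\chi^{(1)}_n)$, whence $\chi^{(1)}_n\le \psi_n$ with $\psi_0=1$, $\psi_{n+1}=g(\psi_n)$. Because $x_-\ge 1$ gives $\tfrac{a_1+l_{1,2}x_-+l_{1,3}}{a_3}>1$, the map $g$ again satisfies the hypotheses of Lemma \ref{lemma:3.0.1}. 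In subcase (b)(ii), $x'_{+}\ge 1$ yields $\psi_n\nearrow x'_{-}$ bounded, so $\chi^{(1)}$ is bounded; together with the boundedness of $\chi^{(2)}$ this gives $h(\mathcal{T}_M)=\log a_3$ by Theorem \ref{theorem:3.0.2-0}.

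The remaining and genuinely delicate part is subcase (b)(i), where $x'_{+}$ does not exist or $x'_{+}<1$, so that $g(x)>x$ on $[1,\infty)$; here the upper bound is useless and I must produce a divergent \emph{lower} bound for the non-autonomous recurrence. The plan is to fix a small $\epsilon>0$, choose $N$ with $\chi^{(2)}_n>x_--\epsilon$ for $n\ge N$, and deduce $\chi^{(1)}_{n+1}>g_\epsilon(\chi^{(1)}_n)$ for $n\ge N$, where $g_\epsilon$ is $g$ with $x_-$ replaced by $x_--\epsilon$. The main obstacle is to show that the strict inequality $g(x)>x$ on $[1,\infty)$ is robust: since $g(x)-x$ is an upward parabola with no root in $[1,\infty)$, it is bounded below by a positive constant on compacta while growing quadratically at infinity, whereas $g_\epsilon-g$ is only $O(\epsilon)$ on compacta and grows at most linearly; hence for $\epsilon$ small, $g_\epsilon(x)>x$ on all of $[1,\infty)$. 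Then the argument of Lemma \ref{lemma:3.0.1}(a) applied to $g_\epsilon$ forces the iterates from $\chi^{(1)}_N\ge 1$ to diverge, so $\chi^{(1)}_n\to\infty$ and Theorem \ref{theorem:3.0.2-0} yields $h(\mathcal{T}_M)>\log a_3$.
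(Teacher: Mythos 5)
Your proposal is correct and follows essentially the same route as the paper: reduce to the behaviour of the ratio sequences $\chi^{(1)}_n,\chi^{(2)}_n$, settle $\chi^{(2)}$ by Lemma \ref{lemma:3.0.1} applied to $f$, bound $\chi^{(1)}$ above by the $g$-iterates in case (b)(ii), and in case (b)(i) bound it below by an autonomous recurrence with the driving term frozen slightly below $x_-$ (the paper freezes it at $\chi^{(2)}_m$ for large $m$ rather than at $x_--\epsilon$, which is the same device), concluding via Theorem \ref{theorem:3.0.2-0}. Your explicit robustness argument for why the perturbed map $g_\epsilon$ still has no fixed point on $[1,\infty)$ fills in a step the paper only asserts.
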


\begin{proof}
Let $M'=
\left[
\begin{array}{cc}
E_{a_{2}} & R_{l_{2,3}} \\
O & E_{a_{3}}
\end{array}
\right]$. By Theorem \ref{theorem:3.0.2}, we have $h(\mathcal{T}_{M})\geq h(\mathcal{T}_{M'})> \log a_{3}$ when $x_{+}$ does not exist or $x_{+}<1$.   When $x_{+}\geq 1$, by Lemma \ref{lemma:3.0.1} and  Theorem \ref{theorem:3.0.2}, $\{\chi^{(2)}_{n}\}$ approaches $x_{-}$ as $n\rightarrow\infty$ and $h(\mathcal{T}_{M'})=\log a_{3}$. Furthermore, $g(x)$ is well-defined when $x_{+}\geq 1$.

For Case (b)(i), consider
\begin{equation*}
g_{m}(x)\equiv \left(\frac{a_{1}}{a_{3}}x+\frac{l_{1,2}}{a_{3}}\chi^{(2)}_{m}+\frac{l_{1,3}}{a_{3}}\right)^{2}
\end{equation*}
for $m\geq 0$.
Let the maximal real root of $g_{m}(x)=x$ be $x'_{+}(m)$.
Since $\{\chi^{(2)}_{n}\}$ approaches to $x_{-}$ increasingly as $n\rightarrow\infty$, it can be shown that if $x'_{+}$ does not exist, then there exists $N\geq 1$ such that $x'_{+}(m)$ does not exist for $m\geq N$; if $x'_{+}<1$, then there exists $N\geq 1$ such that $x'_{+}(m)<1$  for $m\geq N$. It is clear that $\chi^{(1)}_{n+1}\geq g_{m}(\chi^{(1)}_{n})$ for $n\geq m$. By Lemma \ref{lemma:3.0.1}, it can be verified that $\{\chi^{(1)}_{n}\}$ approaches $\infty$ as $n$ tends $\infty$. Therefore, from Theorem \ref{theorem:3.0.2-0}, $h(\mathcal{T}_{M})>\log a_{3}$.

For Case (b)(ii), let $\omega_{n+1}=g(\omega_{n})$, $n\geq 0$, with $\omega_{0}=1$. By Lemma \ref{lemma:3.0.1}, $\{\omega_{n}\}$ is bounded. Since $\chi^{(1)}_{n}\leq \omega_{n}$, $n\geq 0$, $\{\chi^{(1)}_{n}\}$ is bounded, which yields $h(\mathcal{T}_{M})=\log a_{3}$. The proof is complete.
\end{proof}

Table 4 provides numerical results for each case in Theorem \ref{theorem:3.0.2-1}.
\begin{center}
\begin{tabular}{|c|c|c|c|c|c|c|c|c|}
\hline
    Case in Thm \ref{theorem:3.0.2-1} &$a_{1}$ & $a_{2}$ & $a_{3}$ & $l_{1,2}$ & $l_{1,3}$ & $l_{2,3}$ & $h(\mathcal{T}_{M})$ & $\log a_{3}$   \\
\hline
(a) &1 & 2 & 3 & 1 & 2 & 2 & 0.517166 & 0.477121  \\
\hline
(b)(i) &2 & 1 & 5 & 1 & 3 & 5 & 0.703385 & 0.69897  \\
\hline
(b)(ii) &1 & 1 & 6 & 1 & 5 & 6 & 0.778151& 0.778151  \\
\hline
\end{tabular}
\end{center}
\begin{equation*}
\text{Table 4.}
\end{equation*}

\section{Relative denseness of entropy for irreducible Markov hom tree-shift}
This section is devoted to the relative denseness of the sets
\begin{equation} \label{eq:set_of_entropy}
\mathcal{H}^{(d)}=\left\{h(\mathcal{T}_{M}):\mathcal{T}_{M} \text{ is on }d\text{-tree and }M \text{ is binary}\right\},
\end{equation}
and
\begin{equation} \label{eq:set_of_irr_entropy}
\mathcal{H}^{(d)}_{irr}=\left\{h(\mathcal{T}_{M}): \mathcal{T}_{M} \text{ is on }d\text{-tree,} \text{ and }M \text{ is binary and irreducible}\right\},
\end{equation}
in the set of entropies of all hom tree-shift. More specifically, we first demonstrate that \eqref{eq:set_of_entropy} and \eqref{eq:set_of_irr_entropy} shares a common closure and then show the closure coincides with the closure of $\mathcal{H(S)}$, where $\mathcal{S}$ denotes the the collection of all hom tree-shifts. To this end, we prove that for every hom Markov tree-shift $\mathcal{T}_M$, there exists a sequence of irreducible matrices $M_N$ such that $h(\mathcal{T}_{M_N})$ converges to $h(\mathcal{T}_M)$. Equivalently, for every hom Markov tree-shift $\mathcal{T}_G$ induced by a directed graph $G$, there exists a sequence of strongly connected graphs $G_N$ (i.e., for all different vertices $a, b$ in $G_N$, there is a path from $a$ to $b$), which is some component of $G^{(N)}$ defined in \eqref{eq:graph_union} such that $h(\mathcal{T}_{G_N})$ converges to $h(\mathcal{T}_G)$. In summary, the construction of $\mathcal{T}_{G_N}$ is summarized in the following steps:
\begin{enumerate}
    \item Summarize the graph representation of a hom Markov tree-shift by means of its component graph to extract the subtrees of components, each of which admits exactly one vertex with zero indegree.
    \item Modify the subgraphs of the original graph restricted on each of the subtrees of components to generate the desired sequence of strongly connected graphs.
\end{enumerate}
For the convenience of the reader, each step of the construction of $G^{(N)}$ is illustrated in Figure \ref{fig:construction_overview}. 
	
    Let $M \in \{0,1\}^{\norm{\alphabet} \times \norm{\alphabet}}$ be a essential adjacency Matrix indexed by $\alphabet$ with irreducible decomposition as
	\begin{equation}
		M=\begin{bmatrix}
			M_{1,1} & R_{1,2} & R_{1,3} & \cdots & R_{1,q} \\
			O & M_{2,2} & R_{2,3} & \cdots & R_{2,q} \\
			O & O & M_{3,3} & \cdots & R_{3,q} \\
			& \vdots & & \ddots & \vdots \\
			O & O & O & \cdots & M_{q,q},
		\end{bmatrix}
	\end{equation}
	where $M_{i,i}$ is a $q_i \times q_i$ irreducible submatrix indexed by $\alphabet_i=\{a_{i;j}: 1 \le j \le q_i\}$. Define 
	\begin{equation*}
	    N_i=\begin{cases}
	    1 & \text{if } M_{i,i}=O, \\
	    \min \left\{N \in \Nint: \sum\limits_{j=1}^N [(M_{i,i})^{j}]_{a,b} \ge 1, \forall a,b \in \alphabet_i \right\} & \text{otherwise.}
	    \end{cases}
	\end{equation*}
	
	\noindent \textbf{Construction of $G^{(N)}$} Let $G=(V,E)$ be the graph representation of $\mathcal{T}_M$ defined as follows.
	\begin{equation*}
		\begin{cases}
		V=\alphabet, \\
		E=\{(a,b) \in V \times V: M_{a,b}=1, \forall a,b \in \alphabet\}.
		\end{cases}
	\end{equation*}
	
	\noindent \textbf{Step 1.} Consider the graph $\mathbf{G}=(\mathbf{V},\mathbf{E})$ of $G$, which is defined as
	\begin{equation*}
		\begin{cases}
			\mathbf{V}=\{\alphabet_i: 1 \le i \le q\}, \\
			\mathbf{E}=\{(\alphabet_i,\alphabet_j) \in \mathbf{V} \times \mathbf{V}: i \ne j, R_{i,j} \ne O\}.
		\end{cases}
	\end{equation*}
    Suppose $\mathbf{I}$ (respectively, $\mathbf{T}$) are vertices in $\mathbf{G}$ whose indegrees (respectively, outdegree) are zeros. Without loss of generality, we assume $\mathbf{I} \cap \mathbf{T} = \emptyset$, for $\alphabet_i \in \mathbf{I} \cap \mathbf{T}$ corresponds to an isolated component in $G$, which can be separated from the beginning of the discussion. For each $\alphabet_{i^\ast} \in \mathbf{I}$, define the restriction $\mathbf{G}_{i^\ast}=(\mathbf{V}_{i^\ast},\mathbf{E}_{i^\ast})$ of $\mathbf{G}$ as
	\begin{align*}
		\begin{cases}
			\mathbf{V}_{i^\ast}=\{\alphabet_{i^\ast}\} \cup \{\alphabet_i: \exists \text{ a path in } \mathbf{G} \text{ from } \alphabet_{i^\ast} \text{ to } \alphabet_i\}, \\
			\mathbf{E}_{i^\ast}=\mathbf{E} \cap (\mathbf{V}_{i^\ast} \times \mathbf{V}_{i^\ast}),
		\end{cases}
	\end{align*}
	in which $\alphabet_{i^\ast}$ is the only vertex with zero indegree. Define the graphs $G_{i^\ast}=(V_{i^\ast},E_{i^\ast})$ as
	\begin{equation*}
		\begin{cases}
			V_{i^\ast}=\bigcup\limits_{\alphabet_i \in \mathbf{V}_{i^\ast}} \alphabet_i, \\
			E_{i^\ast}=E \cap (V_{i^\ast} \times V_{i^\ast}).
		\end{cases}
	\end{equation*}
	
	\noindent \textbf{Step 2.} For each $N \ge 1$, define the graph of components $G^{(N)}_{i^\ast}=(V^{(N)}_{i^\ast}, E^{(N)}_{i^\ast})$ as follows
	\begin{equation*}
		\begin{cases}
			\begin{split}
				V^{(N)}_{i^\ast} = & \ V^{(N)}_{i^\ast,1} \cup V^{(N)}_{i^\ast,2} \cup V^{(N)}_{i^\ast,3} \\
				= & \ (V_{i^\ast} \setminus \cup_{\alphabet_i \in \mathbf{T}} \alphabet_i) \times \{i^\ast\} \times \{0\} \\
				& \begin{split}
				    \cup \{(a,i^\ast,r): a \in V_{i^\ast} \cap \alphabet_i, \alphabet_i \in \mathbf{T}, 0 \le r \le 2 N, \exists \text{ a walk } &b_{-1} b_{0} \cdots b_{r-1} a \text{ in } G, \\
				    b_{-1} \in V_{i^\ast} \setminus \alphabet_i, b_{j} \in \alphabet_i, \forall \ 0 \le j \le r-1\}
				\end{split} \\
				& \begin{split} \cup \{i^\ast\} \times \{0,1, \cdots, N-1\} \end{split}
			\end{split} \\
			\begin{split}
				E^{(N)}_{i^\ast} = & E^{(N)}_{i^\ast,1} \cup E^{(N)}_{i^\ast,2} \cup E^{(N)}_{i^\ast,3} \cup E^{(N)}_{i^\ast,4} \cup E^{(N)}_{i^\ast,5} \cup E^{(N)}_{i^\ast,6} \\
				= & \begin{split} \{((a,i^\ast,0),(b,i^\ast,0)) \in V^{(N)}_{i^\ast,1} \times V^{(N)}_{i^\ast,1}: (a,b) \in E_{i^\ast} \} \end{split} \\
				& \begin{split} \cup \{((a,i^\ast,0),(b,i^\ast,0)) \in V^{(N)}_{i^\ast,1} \times V^{(N)}_{i^\ast,2}: (a,b) \in E_{i^\ast} \} \end{split}\\
				& \begin{split} \cup \{((a,i^\ast,r),(b,i^\ast,r+1)) \in V^{(N)}_{i^\ast,2} \times V^{(N)}_{i^\ast,2}: (a,b) \in E_{i^\ast}, 0 \le r < 2 N\} \end{split} \\
				& \begin{split} \cup \{((a,i^\ast,2 N),(i^\ast,0)) \in V^{(N)}_{i^\ast,2} \times V^{(N)}_{i^\ast,3}\} \end{split}\\
				& \begin{split} \cup \{((i^\ast,r),(i^\ast,r+1)) \in V^{(N)}_{i^\ast,3} \times V^{(N)}_{i^\ast,3}: 0 \le r < N-1\} \end{split}\\
				& \begin{split} \cup \{((i^\ast,N-1),(a_{i^\ast;1},i^\ast,0)) \in V^{(N)}_{i^\ast,3} \times V^{(N)}_{i^\ast,1}\} \end{split}
			\end{split}.
		\end{cases}
	\end{equation*}
	Finally, define
	\begin{equation} \label{eq:graph_union}
		G^{(N)} = (V^{(N)},E^{(N)}):= \bigcup\limits_{\alphabet_{i^\ast} \in \mathbf{I}} G^{(N)}_{i^\ast}.
	\end{equation}
	This sequence of graphs $G^{(N)}$ corresponds to an associated sequence of entropies $h(\mathcal{T}_G^{(N)})$ converging to $h(\mathcal{T}_G)$.
	
	\begin{remark}
	A few remarks on the construction of $G^{(N)}$ are noteworthy. In \textbf{Step 1}, it follows from the irreducible decomposition that $\mathbf{G}$ admits no cycles and thus is composed of oriented trees. In \textbf{Step 2}, $V^{(N)}_{i^\ast,1}$ is simply comprised of the vertices which do not lie in the component with zero outdegree. Secondly, the vertices in $V^{(N)}_{i^\ast,2}$ are essentially multiple copies of the vertices lying in the component with zero outdegree, which are arranged in a pipeline with $(2N+1)$ phases so that the indegree and the outdegree of each vertex are at least one. Finally, $V^{(N)}_{i^\ast,3}$ entailing $V^{(N)}_{i^\ast,2}$ consists of merely $N$ dummy vertices for an extra extension so as to render the graph a strongly connected graph.
	\end{remark}
	\begin{example}
	    An example is provided in Figure \ref{fig:construction_overview}, in which the matrix $M$ is given in Figure \ref{fig:adjacency_matrix} and the corresponding illustration of each graph is shown in Figure \ref{fig:graph_representation}, \ref{fig:component_tree}, \ref{fig:separated_component_tree}, \ref{fig:graph_representation_separated_tree}, and \ref{fig:graph_extension}. The components in $\mathbf{I}$ are highlighted in green, while the ones in $\mathbf{T}$ are highlighted in blue.
	\end{example}
	
	\begin{figure}
	    \centering
	    \begin{subfigure}[t]{0.45\textwidth}
    	    \centering
    	    \includegraphics[scale=0.7]{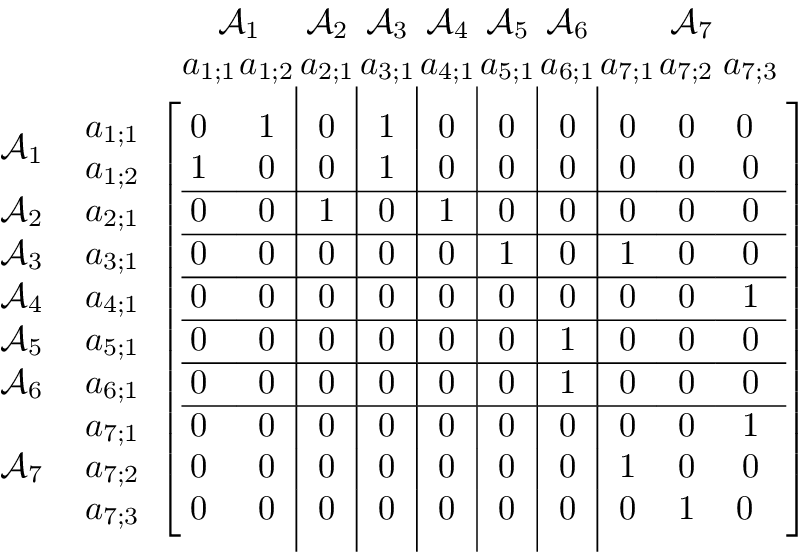}
    	    \caption{Adjacency matrix $M$}
    	    \label{fig:adjacency_matrix}
    	\end{subfigure}
	    \begin{subfigure}[t]{0.45\textwidth}
    	    \includegraphics[width=0.9\textwidth]{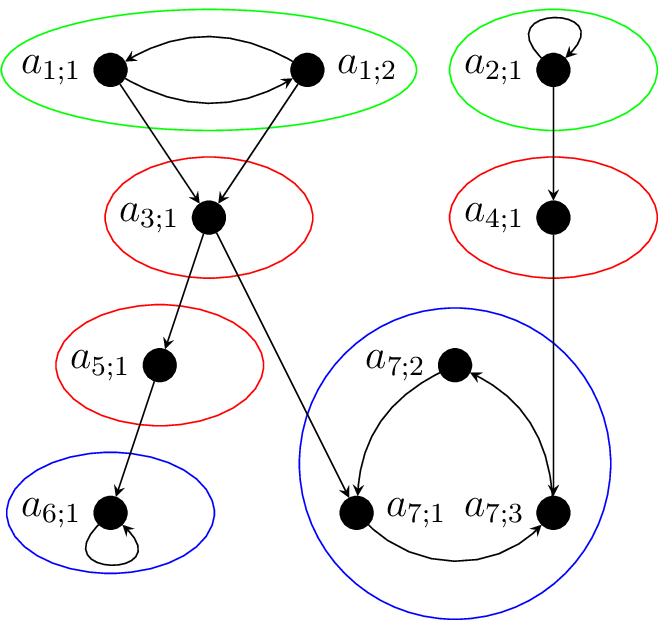}
    	    \caption{Illustration of $G=(V,E)$}
    	    \label{fig:graph_representation}
	    \end{subfigure}
	    \begin{subfigure}[t]{0.45\textwidth}
    	    \centering
    	    \includegraphics{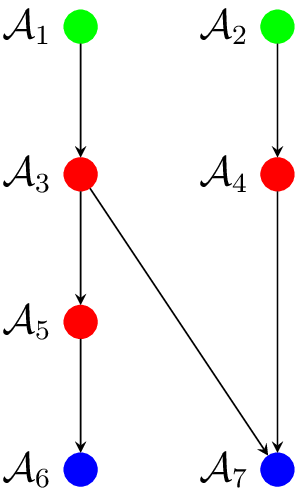}
    	    \caption{Illustration of $\mathbf{G}=(\mathbf{V},\mathbf{E})$}
    	    \label{fig:component_tree}
    	\end{subfigure}
    	\begin{subfigure}[t]{0.45\textwidth}
    	    \centering
    	    \includegraphics{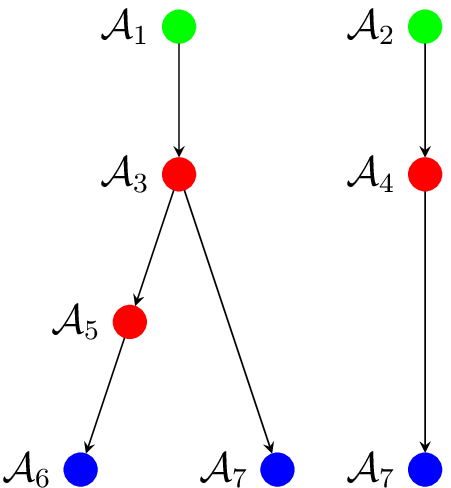}
    	    \caption{Illustration of $\mathbf{G}_{i^\ast}=(\mathbf{V}_{i^\ast},\mathbf{E}_{i^\ast})$}
    	    \label{fig:separated_component_tree}
    	\end{subfigure}
    	\begin{subfigure}[t]{0.5\textwidth}
    	    \centering
    	    \includegraphics[width=0.9\textwidth]{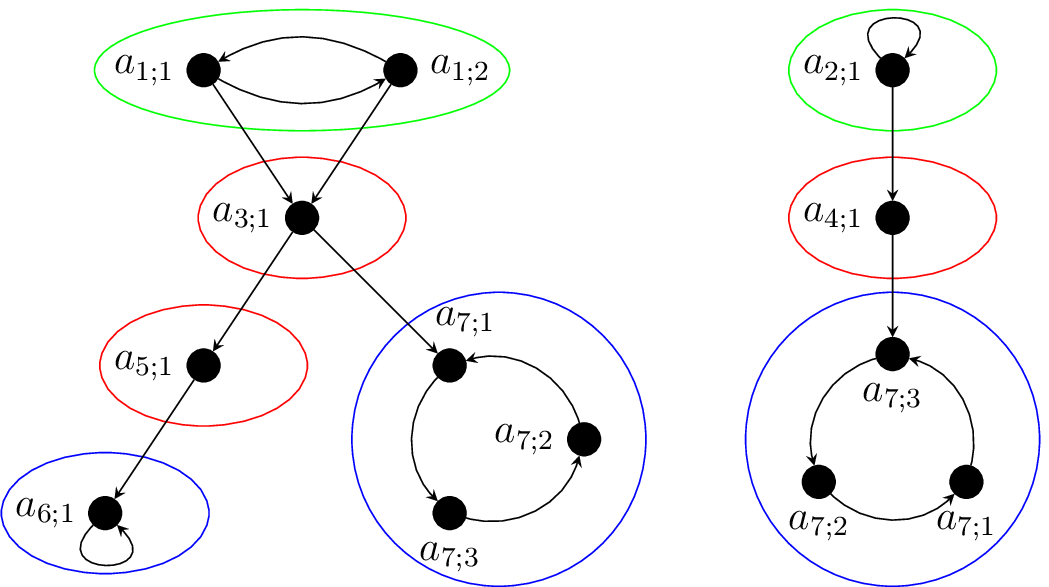}
    	    \caption{Illustration of $G_{i^\ast}(V_{i^\ast},E_{i^\ast})$}
    	    \label{fig:graph_representation_separated_tree}
    	\end{subfigure}
    	\begin{subfigure}[t]{0.45\textwidth}
    	    \centering
    	    \includegraphics[scale=0.45]{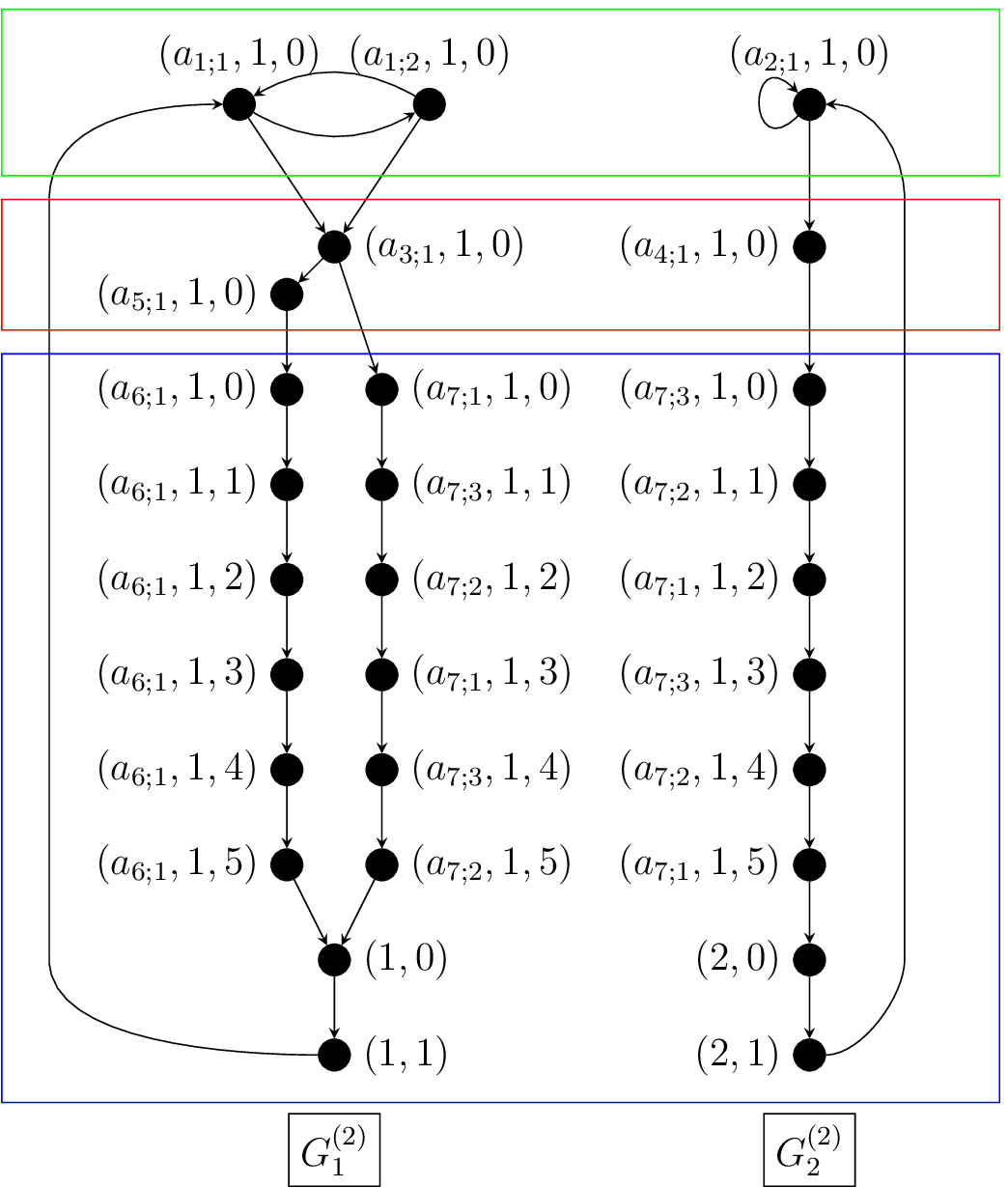}
    	    \caption{Illustration of $G^{(N)}=(V^{(N)},E^{(N)})$}
    	    \label{fig:graph_extension}
    	\end{subfigure}
    	\caption{Illustration of construction of $G^{(N)}$}
    	\label{fig:construction_overview}
	\end{figure}
	\begin{proposition} \label{prop:block_comparison}
		Suppose $N \ge \max_i N_i$ and $n \le N$. Then,
		\item[(1)] For each $a \in \alphabet$, there exist $i^\ast \in \mathbf{I}$ and $0 \le r_0 \le \max_i N_i$ such that $p_{G;a}(n) \le p_{G^{(N)};(a,i^\ast,r_0)}(n)$.
		\item[(2)] $p_{G^{(N)}}(N) \le p_G(N) \cdot (3 N + 2) \cdot \norm{\mathbf{I}}$.
	\end{proposition}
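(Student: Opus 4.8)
The plan is to compare blocks of $\mathcal{T}_{G^{(N)}}$ with blocks of $\mathcal{T}_G$ through the projection $\pi$ that deletes the phase coordinate, $\pi(a,i^\ast,r)=a$, which carries every edge of type $E^{(N)}_{i^\ast,1}$, $E^{(N)}_{i^\ast,2}$, $E^{(N)}_{i^\ast,3}$ to an edge of $G$. The governing principle is a \emph{phase budget}: along any branch the third coordinate stays $0$ while the branch remains outside the terminal components $\mathbf{T}$, and it increases by exactly one at each step once the branch has entered a terminal component. Since a depth-$n$ branch takes at most $n\le N$ steps while the pipeline $V^{(N)}_{i^\ast,2}$ offers $2N+1$ phases, no branch can overflow the pipeline onto the re-entry edges $E^{(N)}_{i^\ast,4}$, $E^{(N)}_{i^\ast,5}$, $E^{(N)}_{i^\ast,6}$ unless it starts at a large phase or on a dummy vertex. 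This dichotomy organizes the whole argument.

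For (1), given $a\in\alphabet$ I would first choose $\alphabet_{i^\ast}\in\mathbf{I}$ with $a\in V_{i^\ast}$, which exists because every vertex is reachable from some source component. If $a$ lies outside $\cup_{\alphabet_i\in\mathbf{T}}\alphabet_i$ I set $r_0=0$; if $a\in\alphabet_i$ with $\alphabet_i\in\mathbf{T}$, the definition of $N_i$ provides a walk into $\alphabet_i$ of some length $r_0\le N_i\le\max_iN_i$ that reaches $a$, so that $(a,i^\ast,r_0)\in V^{(N)}_{i^\ast,2}$. I then define a phase-lift sending a depth-$n$ block $u$ of $\mathcal{T}_G$ rooted at $a$ to the block of $\mathcal{T}_{G^{(N)}}$ rooted at $(a,i^\ast,r_0)$ obtained by annotating each node with the phase equal to $r_0$ plus the number of steps its branch has spent inside a terminal component. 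Because that phase is at most $r_0+n\le\max_iN_i+N\le 2N$, the lift never leaves the pipeline and is a legitimate block; it is injective since $\pi$ recovers $u$. Hence $p_{G;a}(n)\le p_{G^{(N)};(a,i^\ast,r_0)}(n)$.

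For (2) I would write $p_{G^{(N)}}(N)=\sum_{\alphabet_{i^\ast}\in\mathbf{I}}\sum_{v\in V^{(N)}_{i^\ast}}p_{G^{(N)};v}(N)$, using that distinct $i^\ast$ yield vertex-disjoint graphs, and bound the inner sum by $(3N+2)\,p_G(N)$ by splitting on $V^{(N)}_{i^\ast,1}$, $V^{(N)}_{i^\ast,2}$, $V^{(N)}_{i^\ast,3}$. For a root in $V^{(N)}_{i^\ast,1}$, or a root $(a,i^\ast,r)\in V^{(N)}_{i^\ast,2}$ with $r\le N$, the phase budget forbids re-entry, so $\pi$ maps the block injectively to a depth-$N$ block of $\mathcal{T}_G$ rooted at $\pi(v)$; summing over the distinct symbols gives at most $p_G(N)$ for $V^{(N)}_{i^\ast,1}$ and at most $p_G(N)$ for each of the $N+1$ phases $r\le N$. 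For $(a,i^\ast,r)$ with $N<r\le 2N$, every branch overflows at depth $2N-r<N$ and is then \emph{forced} onto the dummy chain with no further choices and without re-entering the graph, so the block is determined by its depth-$(2N-r)$ top part and $p_{G^{(N)};(a,i^\ast,r)}(N)\le p_{G;a}(2N-r)\le p_{G;a}(N)$, contributing at most $p_G(N)$ for each of the remaining $N$ phases. Thus $V^{(N)}_{i^\ast,1}$ and $V^{(N)}_{i^\ast,2}$ together contribute at most $(2N+2)\,p_G(N)$.

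The main obstacle is the dummy block $V^{(N)}_{i^\ast,3}$, the only situation in which a depth-$N$ block genuinely re-enters the graph and branches again. Unwinding the recursion through the out-degree-one dummy chain gives, for a root $(i^\ast,r)$ with $0\le r\le N-1$,
\begin{equation*}
p_{G^{(N)};(i^\ast,r)}(N)=\left(p_{G^{(N)};(a_{i^\ast;1},i^\ast,0)}(r)\right)^{d^{N-r}}\le\left(p_{G;a_{i^\ast;1}}(r)\right)^{d^{N-r}},
\end{equation*}
a $d^{N-r}$-fold power that is not directly comparable with $p_G(N)$. To control it I would use the irreducibility of the source component $M_{i^\ast,i^\ast}$: a cycle through $a_{i^\ast;1}$ produces, for every length $\ell=N-r\ge 1$, a vertex $c$ and a walk $c=c_{N-r}\to\cdots\to c_0=a_{i^\ast;1}$ in $G$. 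Iterating the block recursion $p_{G;c_k}(r+k)=\big(\sum_{(c_k,b)\in E}p_{G;b}(r+k-1)\big)^d\ge\big(p_{G;c_{k-1}}(r+k-1)\big)^d$ along this walk yields $p_{G;c}(N)\ge\left(p_{G;a_{i^\ast;1}}(r)\right)^{d^{N-r}}$, so each of the $N$ dummy roots contributes at most $p_{G;c}(N)\le p_G(N)$. Adding the three blocks gives $p_G(N)+(2N+1)\,p_G(N)+N\,p_G(N)=(3N+2)\,p_G(N)$ per source component, and summing over the $\norm{\mathbf{I}}$ components yields the bound. The delicate points to verify are the exact depth at which overflow occurs—so that the terminal tail is forced while the dummy prefix does re-enter—and the existence of a walk of the \emph{exact} length $N-r$ into $a_{i^\ast;1}$, which is precisely where irreducibility and the standing hypothesis $N\ge\max_iN_i$ enter.
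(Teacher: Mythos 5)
Your proof is correct and follows essentially the same route as the paper: part (1) by lifting each $n$-block of $\mathcal{T}_G$ rooted at $a$ to a block of $\mathcal{T}_{G^{(N)}}$ rooted at $(a,i^\ast,r_0)$ via the phase annotation, and part (2) by the identical four-way decomposition of roots (into $V^{(N)}_{i^\ast,1}$, the two halves of $V^{(N)}_{i^\ast,2}$, and $V^{(N)}_{i^\ast,3}$) with the same per-component contributions $1$, $N+1$, $N$, $N$. The only cosmetic difference is in the dummy-root case, where you dominate $\left(p_{G;a_{i^\ast;1}}(r)\right)^{d^{N-r}}$ by $p_G(N)$ through a numerical recursion along a cycle in the irreducible source component, whereas the paper builds an explicit injection into $\mathcal{B}_N(\mathcal{T}_G)$ from a backward walk supplied by essentiality; both rest on the same walk-of-exact-length $N-r$ observation.
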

	\begin{proof}
		Suppose $f:V^{(N)} \to (\alphabet \cup \mathbf{I})$ is defined such that $f(a,i^\ast,r)=a$ and that $f(i^\ast,r)=i^\ast$. We could further define an induced map $f^\ast:\mathcal{B}_n(\mathcal{T}_{G^{(N)}}) \to (\alphabet \cup \mathbf{I})^{\Delta_n}$ for all $n \ge 0$ such that $(f^\ast(v))_g=f(v_g)$.
		
		(1) To prove this, it is sufficient to show that for every $u \in \mathcal{B}_n(\mathcal{T}_G)$, there exists $v(u) \in \mathcal{B}_n(\mathcal{T}_{G^{(N)}})$ such that $f^\ast(v(u))=u$ and that $(v(u))_\epsilon = (u_\epsilon,i^\ast,r_0)$ for some $r_0$ depending only on $u_\epsilon$. We prove the claim by induction. For the the case $n=0$, the induction hypothesis holds naturally, since if $u=a$, then there exist, by definition of $N_i$ and the formulation above, $\alphabet_{i^\ast} \in \mathbf{I}$ and $0 \le r_0 \le \max_i N_i$ such that $v(u)=(a,i^\ast,r_0)$. We suppose the induction hypothesis holds for $n$ and $u \in \mathcal{B}_{n+1}(\mathcal{T}_G)$ is given. For the case where $n+1 \le N$, we construct $v(u) \in \mathcal{B}_{n+1}(\mathcal{T}_{n+1})$ satisfying $f^\ast(v(u))=u$ and $(v(u))_\epsilon = (u_\epsilon,i^\ast,r_0)$ with $0 \le r_0 \le \max_i N_i$. It follows from the induction hypothesis that there exists $v(u|_{\Delta_n}) \in \mathcal{B}_n(\mathcal{T}_{G^{(N)}})$ satisfying $f^\ast(v(u|_{\Delta_n})) = u|_{\Delta_n}$ and $(v(u|_{\Delta_n}))_\epsilon = (u_\epsilon,i^\ast,r_0)$ with $0 \le r_0 \le \max_i N_i$. Since $n+1 \le N$, $(v(u|_{\Delta_n}))_g$ has the form $(a,i^\ast,r)$ with $r_0 \le r \le 2 \cdot \max_i N_i - 1$ and we may define $v(u)$ as follows:
		\begin{equation*}
			(v(u))_g:=
			\begin{cases}
				(v(u|_{\Delta_n}))_{g} & \text{if } \norm{g} \le n, \\
				(u_{g},i^\ast,0) & \text{if } g=g' w, \norm{g'}=n, v(u|_{\Delta_n})_{g'}=(a,i^\ast,0), a \in \alphabet_i, \alphabet_i \notin \mathbf{T}, \\
				(u_{g},i^\ast,r+1) & \text{if } g=g' w, \norm{g'}=n, v(u|_{\Delta_n})_{g'}=(a,i^\ast,r), a \in \alphabet_i, \alphabet_i \in \mathbf{T}. \\
			\end{cases}
		\end{equation*}
		It is clear that $v^{(N)} \in \mathcal{B}_{n+1}(\mathcal{T}_{G^{(N)}})$, that $f^\ast(v(u))=u$, and that $r_0$ depends solely on $u_\epsilon$. Hence, the claim holds for all $0 \le n \le N$.
		
		(2) We subdivide $\mathcal{B}_N(\mathcal{T}_{G^{(N)}})$ into the following four disjoint subsets, and estimate the cardinality of each one.
		\begin{align*}
			\mathcal{B}_N(\mathcal{T}_{G^{(N)}}) = & S_1 \cup S_2 \cup S_3 \cup S_4 \\
			= & \{v \in \mathcal{B}_N(\mathcal{T}_{G^{(N)}}): v_\epsilon = (a,i^\ast,0), a \in \alphabet_i, \alphabet_i \notin \mathbf{T}\} \\
			& \cup \{v \in \mathcal{B}_N(\mathcal{T}_{G^{(N)}}): v_\epsilon = (a,i^\ast,r), a \in \alphabet_i, \alphabet_i \in \mathbf{T}, 0 \le r \le N\} \\
			& \cup \{v \in \mathcal{B}_N(\mathcal{T}_{G^{(N)}}): v_\epsilon = (a,i^\ast,r), a \in \alphabet_i, \alphabet_i \in \mathbf{T}, N+1 \le r \le 2 N\} \\
			& \cup \{v \in \mathcal{B}_N(\mathcal{T}_{G^{(N)}}): v_\epsilon = (i^\ast,r), 0 \le r \le N-1\}.
		\end{align*}
		
		Note that if $v \in S_1$, $f^\ast(v) \in \mathcal{B}_N(\mathcal{T}_G)$. Furthermore, given $u \in \mathcal{B}_N(\mathcal{T}_G)$, 
		\begin{equation*}
			\norm{\{v \in S_1: f^\ast(v)=u\}} \le \norm{\mathbf{I}}.
		\end{equation*}
		We have
		\begin{equation} \label{eq:pattern_estimation_1}
			\norm{S_1} \le \norm{\mathbf{I}} \cdot \norm{p_{G}(N)}.
		\end{equation}
		
		The estimate of $S_2$ is similar to $S_1$. If $v \in S_2$, $f^\ast(v) \in \mathcal{B}_N(\mathcal{T}_G)$. On the other hand, from the definition of $V^{(N)}_{i^\ast,2}$ it follows that
		\begin{equation*}
			\norm{\{v \in S_2: f^\ast(v)=u\}} \le \norm{\mathbf{I}} \cdot (N+1).
		\end{equation*}
		Thus, we obtain
		\begin{equation} \label{eq:pattern_estimation_2}
			\norm{S_2} \le \norm{\mathbf{I}} \cdot (N+1) \cdot \norm{p_{G}(N)}.
		\end{equation}
		
		As for $S_3$, every $v \in S_3$ has the form in Figure \ref{fig:pattern_1}. In particular, $f^\ast(v|_{\Delta_{2 N - r}}) \in \mathcal{B}_{2 N - r}(\mathcal{T}_{G})$. Also, $v_g = v'_g$ for all $\norm{g} > 2N-r$ if $v, v' \in \mathcal{B}_N(\mathcal{T}_{G_N})$, $v_\epsilon = (a,i^\ast,r)$, $v'_\epsilon = (b,i^\ast,r)$. Hence,
		\begin{equation} \label{eq:pattern_estimation_3}
    		\begin{aligned}
    			\norm{S_3} & = \sum_{\alphabet_{i^\ast} \in \mathbf{I}} \sum_{r=N+1}^{2 N} \sum_{a \in \alphabet_{i^\ast}} p_{G^{(N)};(a,i^\ast,r)}(2 N - r) \\
    			& = \sum_{\alphabet_{i^\ast} \in \mathbf{I}} \sum_{r=N+1}^{2 N} \sum_{a \in \alphabet_{i^\ast}} p_{G;a}(2 N - r) \\
    			& = \sum_{\alphabet_{i^\ast} \in \mathbf{I}} \sum_{r=N+1}^{2 N} \sum_{a \in \alphabet_{i^\ast}} p_{G}(2 N - r) \\
    			& \le \norm{\mathbf{I}} \cdot N \cdot p_{G}(N).
			\end{aligned}
		\end{equation}
		
		Finally, every $v \in S_4$ has the form in Figure \ref{fig:pattern_2}. In particular, $f^\ast((\sigma_g v)|_{\Delta_{r}}) \in \mathcal{B}_{r}(\mathcal{T}_{G})$ with $v_g=a_{i^\ast;1}$ if $\norm{g} = N-r$. Also, $v|_{\Delta_{N-r}} = v'|_{\Delta_{N-r}}$ if $v_\epsilon = v'_\epsilon = (i^\ast,r)$. On the other hand, since $G$ is essential, for every $0 \le r \le N-1$ there is an walk $a_{i_{r};j_{r}} a_{i_{r+1};j_{r+1}} \cdots a_{i_{N-1};j_{N-1}} a_{i^\ast;1}$ in $G$. Hence, we are able to define $u(v) \in \mathcal{B}_{N}(\mathcal{T}_G)$, as illustrated in Figure \ref{fig:pattern_3}, such that
		\begin{equation*}
			(u(v))_g  = \begin{cases}
				a_{i_{r+\norm{g}};j_{r+\norm{g}}} & \text{if } 0 \le \norm{g} \le N-r-1,\\
				f(v_g) & \text{otherwise.}
			\end{cases}
		\end{equation*}
		Note that it follows immediately from the definition that if $v_\epsilon = v'_\epsilon$ yet $v \ne v'$, then $u(v) \ne u(v')$. Hence, 
		\begin{equation} \label{eq:pattern_estimation_4}
		    \begin{aligned}
    			\norm{S_4} & = \sum_{\alphabet_{i^\ast} \in \mathbf{I}} \sum_{r=0}^{N-1} p_{G^{(N)};(i^\ast,r)}(N) \\
    			& = \sum_{\alphabet_{i^\ast} \in \mathbf{I}} \sum_{r=0}^{N-1} (p_{G^{(N)};(a_{i^\ast,1};i^\ast,0)}(r))^{d^{N-r}} \\
    			& \le \sum_{\alphabet_{i^\ast} \in \mathbf{I}} \sum_{r=0}^{N-1} p_{G;a_{i_r;j_r}}(N) \\
    			& \le \norm{\mathbf{I}} \cdot N \cdot p_{G}(N). \\
    		\end{aligned}
		\end{equation}
		Combining \eqref{eq:pattern_estimation_1}, \eqref{eq:pattern_estimation_2}, \eqref{eq:pattern_estimation_3} with \eqref{eq:pattern_estimation_4} yields the result in (2). The proof is then completed.
		\begin{figure}
		    \centering
		    \includegraphics[scale=0.7]{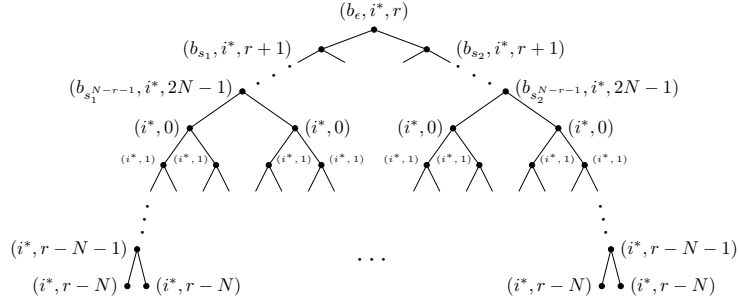}
		    \caption{$N$-block $v \in S_3$}
		    \label{fig:pattern_1}
		\end{figure}
		\begin{figure}
		    \centering
		    \includegraphics[scale=0.7]{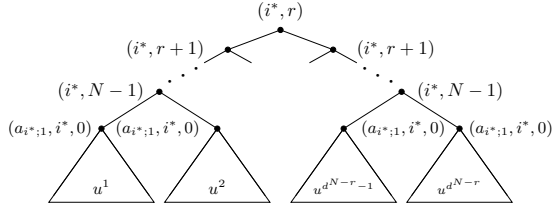}
		    \caption{$N$-block $v \in S_4$}
		    \label{fig:pattern_2}
		\end{figure}
		\begin{figure}
		    \centering
		    \includegraphics[scale=0.7]{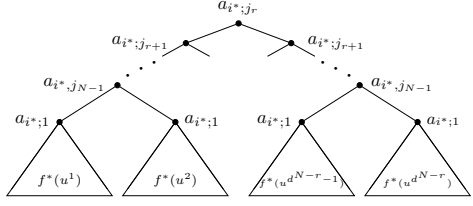}
		    \caption{The corresponding $N$-block $u^{(v)}$ of $v \in S_4$}
		    \label{fig:pattern_3}
		\end{figure}
	\end{proof}
	\begin{proposition} \label{prop:entropy_convergence_essentiality}
		For every essential graph $G$, we have
		\begin{equation} \label{eq:entropy_series_representation}
			\frac{\log \max_a p_{G;a}(n)}{d^{n+1}/(d-1)} = \sum_{i=0}^{n} \frac{d-1}{d^{i+1}} \log \gamma_n,
		\end{equation}
		for some $1 \le \gamma_i \le \norm{\alphabet}^d$. Moreover, $\frac{\log \max_a p_{G;a}(n)}{d^{n+1}/(d-1)}$ tends to $h(\mathcal{T}_G)$ increasingly. 
	\end{proposition}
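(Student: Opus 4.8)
The plan is to obtain the displayed identity as a telescoping rearrangement of the Petersen--Salama recursion, and then to read off both the monotonicity and the limit from the resulting series of nonnegative terms. Write $q_n = \max_{a} p_{G;a}(n)$, and recall that for every vertex $a$ one has $p_{G;a}(n+1) = \bigl(\sum_{b:(a,b)\in E} p_{G;b}(n)\bigr)^d$ with $p_{G;a}(0)=1$. I would set $\gamma_0 = q_0 = 1$ and, for $n \ge 1$, define $\gamma_n = q_n / q_{n-1}^d$. With $a_n := \log q_n$ this is exactly the relation $\log \gamma_n = a_n - d\,a_{n-1}$, which solves to $a_n = \sum_{i=0}^n d^{\,n-i} \log \gamma_i$; dividing through by $d^{n+1}/(d-1)$ turns this into $\frac{\log q_n}{d^{n+1}/(d-1)} = \sum_{i=0}^n \frac{d-1}{d^{i+1}} \log \gamma_i$, which is the asserted series representation (with $\gamma_i$ in the summand).

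The heart of the argument is the two-sided bound $1 \le \gamma_i \le \norm{\alphabet}^d$, and this is where essentiality enters. For the upper bound, if $a^\ast$ attains $q_n$, then $q_n = \bigl(\sum_{b:(a^\ast,b)\in E} p_{G;b}(n-1)\bigr)^d \le (\norm{\alphabet}\, q_{n-1})^d$, so $\gamma_n \le \norm{\alphabet}^d$. For the lower bound I would use that $G$ is essential: letting $b^\ast$ attain $q_{n-1}$, essentiality supplies a predecessor $a$ with $(a,b^\ast)\in E$, whence $p_{G;a}(n) = \bigl(\sum_{c:(a,c)\in E} p_{G;c}(n-1)\bigr)^d \ge p_{G;b^\ast}(n-1)^d = q_{n-1}^d$; therefore $q_n \ge q_{n-1}^d$ and $\gamma_n \ge 1$. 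The base case $\gamma_0 = q_0 = 1$ is immediate since a $0$-block is a single labeled root. I expect this lower bound---specifically the use of a predecessor guaranteed only by essentiality---to be the one genuinely nontrivial step; everything else is bookkeeping.

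With $\gamma_i \ge 1$ in hand, every summand $\frac{d-1}{d^{i+1}} \log \gamma_i$ is nonnegative, so the partial sums, i.e.\ the quantities $\frac{\log q_n}{d^{n+1}/(d-1)}$, increase with $n$; and since $\gamma_i \le \norm{\alphabet}^d$ and $\sum_{i\ge 0} \frac{d-1}{d^{i+1}} = 1$, the series converges, its limit being at most $d\log\norm{\alphabet}$. It remains to identify this limit with $h(\mathcal{T}_G)$. Here I would sandwich $q_n \le p(n) = \sum_a p_{G;a}(n) \le \norm{\alphabet}\, q_n$ and divide by $|\Delta_n|$; since $\frac{\log \norm{\alphabet}}{|\Delta_n|} \to 0$ and $\lim_n \frac{\log p(n)}{|\Delta_n|} = h(\mathcal{T}_G)$ exists by Petersen--Salama, this forces $\frac{\log q_n}{|\Delta_n|} \to h(\mathcal{T}_G)$. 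Finally, because $|\Delta_n| = (d^{n+1}-1)/(d-1)$ satisfies $|\Delta_n|\big/\bigl(d^{n+1}/(d-1)\bigr) = 1 - d^{-(n+1)} \to 1$, the change of denominator does not affect the limit, so $\frac{\log q_n}{d^{n+1}/(d-1)} \to h(\mathcal{T}_G)$ as well, which is exactly the second assertion.
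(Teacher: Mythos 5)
Your proof is correct and follows essentially the same route as the paper: both telescope the Petersen--Salama recursion into the geometric series $\sum_i \frac{d-1}{d^{i+1}}\log\gamma_i$ with $\gamma_n = q_n/q_{n-1}^d$, both use essentiality (a predecessor of the maximizing vertex) to get $\gamma_n \ge 1$ and the trivial row-sum bound for $\gamma_n \le \norm{\alphabet}^d$, and both identify the limit via the sandwich $q_n \le p(n) \le \norm{\alphabet}\,q_n$. The only difference is presentational: the paper carries out the normalization on the full vector $\overline{\mathbf{p}}_G(n)$ (forcing its maximum entry to equal $1$ by induction), whereas you work directly with the scalar $q_n = \max_a p_{G;a}(n)$, which yields the identical $\gamma_n$ with less bookkeeping.
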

	\begin{proof}
		Suppose $M \in \{0,1\}^{\norm{\alphabet} \times \norm{\alphabet}}$ be the adjacency matrix of $G$ and write $\mathbf{p}_G(n) = [p_{G;a_1}(n), p_{G;a_2}(n), \cdots, p_{G;a_{\norm{{A}}}}(n)]^T \in \mathbb{R}^{\norm{\alphabet}}$. It is known (see \cite{BC-apa2015}) that
		\begin{equation*}
			\begin{cases}
				\mathbf{p}_{G}(n) = \bigodot\limits_{i=1}^d (M \mathbf{p}_{G}(n-1)), \\
				\mathbf{p}_{G}(0) = [1,1,\cdots,1]^T,
			\end{cases}
		\end{equation*}
		where $\odot$ denote the entrywise product of the column vectors. We claim that for any given positive real sequence $\{\gamma_i\}_{i=0}^{\infty}$, the system
		\begin{equation*}
			\begin{cases}
				\overline{\mathbf{p}}_{G}(n) = \frac{1}{\gamma_n} \bigodot\limits_{i=1}^d (M \overline{\mathbf{p}}_{G}(n-1)), \\
				\overline{\mathbf{p}}_{G}(0) = [\frac{1}{\gamma_0},\frac{1}{\gamma_0},\cdots,\frac{1}{\gamma_0}]^T,
			\end{cases}
		\end{equation*}
		has the following property:
		\begin{equation*}
			\mathbf{p}_{G}(n) = \gamma_{0}^{d^n} \gamma_{1}^{d^{n-1}} \cdots \gamma_{n}^{d^{0}} \overline{\mathbf{p}}_{G}(n) , \forall n \ge 0.
		\end{equation*}
		We prove the claim by induction. The case $n=0$ is immediate. We now verify the case $n=N+1$ by assuming the induction hypothesis holds for $n=N$. Indeed,
		\begin{align*}
			\mathbf{p}_G(N+1) & = \textstyle\bigodot\limits_{j=1}^d (M \mathbf{p}_G(N)) \\
			& = \textstyle\bigodot\limits_{j=1}^d (M \gamma_{0}^{d^N} \gamma_{1}^{d^{N-1}} \cdots \gamma_{N}^{d^{0}} \overline{\mathbf{p}}_G(N)) \\
			& = \textstyle\gamma_{0}^{d^{N+1}} \gamma_{1}^{d^{N}} \cdots \gamma_{N}^{d^{1}} \bigodot\limits_{j=1}^d (M \overline{\mathbf{p}}_G(N)) \\
			& = \textstyle\gamma_{0}^{d^{N+1}} \gamma_{1}^{d^{N}} \cdots \gamma_{N}^{d^{1}} \gamma_{N+1}^{d^{0}} \overline{\mathbf{p}}_G(N+1).
		\end{align*}
		Thus, the claim holds by induction.
		
		Next, we show by induction on $n$ that if $\gamma_0=1$ and $\gamma_n=\max \odot_{i=1}^d (M \overline{\mathbf{p}}_G(n-1))$ is chosen iteratively, then 
		\begin{equation} \label{eq:coeff_range}
		    1 \le \max_{1 \le i \le \norm{\alphabet}} [\odot_{i=1}^d (M \overline{\mathbf{p}}_G(n))]_i \le \norm{\alphabet}^d,
		\end{equation}
		for all $n \ge 0$. As for the case $n=0$, \eqref{eq:coeff_range} is attained from direct calculation. Now suppose \eqref{eq:coeff_range} holds for $n=N$. By the definition of $\gamma_{N+1}$, one deduces that $\max_{1 \le i \le \norm{\alphabet}} [\overline{\mathbf{p}}_G(N+1)]_i = 1$. Since every vertex in $G$ has both indegree and outdegree at least 1, $M$ is also essential. Thus, $\max_{1 \le i \le \norm{\alphabet}} [M \overline{\mathbf{p}}_G(N+1)]_i \ge 1$ and $\max_{1 \le i \le \norm{\alphabet}} [\odot_{j=1}^d (M \overline{\mathbf{p}}_G(N+1))]_i \ge 1$. On the other hand, we derive the following comparison
		\begin{equation*}
			\max_{1 \le i \le \norm{\alphabet}} \left[{\textstyle\bigodot\limits_{j=1}^d} (M \overline{\mathbf{p}}_G(N+1))\right]_i \le \max_{1 \le i \le \norm{\alphabet}} \left[{\textstyle\bigodot\limits_{j=1}^d} (M [1,1,\cdots,1]^T)\right]_i \le \norm{\alphabet}^d.
		\end{equation*}
		Hence, the claim holds by induction.
		
		Finally, an immediate consequence of the claim is that if $\gamma_n$ is chosen as above, then
		\begin{equation*}
			\max_a p_{G;a}(n) = \max_{1 \le i \le \norm{\alphabet}} \gamma_{0}^{d^n} \gamma_{1}^{d^{n-1}} \cdots \gamma_{n}^{d^{0}} \left[\overline{\mathbf{p}}_{G}(n)\right]_{i} = \gamma_{0}^{d^n} \gamma_{1}^{d^{n-1}} \cdots \gamma_{n}^{d^{0}}.
		\end{equation*}
		As a result, 
		\begin{equation*}
			\frac{\log \max_a p_{G;a}(n)}{d^{n+1}/(d-1)} = \sum_{i=0}^{n} \frac{d-1}{d^{i+1}} \log \gamma_i.
		\end{equation*}
		The proof is finished by noting that the above series is increasing and that
		\begin{equation*}
		    \lim_{n \to \infty} \frac{\log \max_a p_{G;a}(n)}{d^{n+1}/(d-1)} = \lim_{n \to \infty} \frac{\log p_{G}(n)}{\norm{\Delta_n}} = h(\mathcal{T}_{G}).
		\end{equation*}
	\end{proof}
	\begin{proposition} \label{prop:entropy_limit_comparison}
		Let $G^{(N)}$ be as defined. Then, $h(\mathcal{T}_{G_N})$ converges to $h(\mathcal{T}_G)$ as $N$ tends to infinity.
	\end{proposition}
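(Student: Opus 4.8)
The plan is to prove the two inequalities $\liminf_{N} h(\mathcal{T}_{G_N}) \ge h(\mathcal{T}_G)$ and $\limsup_{N} h(\mathcal{T}_{G_N}) \le h(\mathcal{T}_G)$ separately, drawing the first from Proposition \ref{prop:block_comparison}(1) and the second from Proposition \ref{prop:block_comparison}(2), with Proposition \ref{prop:entropy_convergence_essentiality} serving as the bridge between the finite data at level $n=N$ and the entropy in both cases. First I would reduce the statement to the full graph $G^{(N)}$: since $G^{(N)}=\bigcup_{\alphabet_{i^\ast}\in\mathbf{I}}G^{(N)}_{i^\ast}$ is a disjoint union of the strongly connected (hence essential) graphs $G^{(N)}_{i^\ast}$, every admissible labeled tree is supported in a single component, so $p_{G^{(N)}}(n)=\sum_{i^\ast}p_{G^{(N)}_{i^\ast}}(n)$; as there are only $\norm{\mathbf{I}}$ components, $h(\mathcal{T}_{G^{(N)}})=\max_{i^\ast}h(\mathcal{T}_{G^{(N)}_{i^\ast}})=h(\mathcal{T}_{G_N})$ when $G_N$ is taken to be the component of largest entropy. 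It thus suffices to show $h(\mathcal{T}_{G^{(N)}})\to h(\mathcal{T}_G)$.

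For the lower bound, fix $N\ge\max_i N_i$ and evaluate Proposition \ref{prop:block_comparison}(1) at $n=N$: for each $a\in\alphabet$ there are $i^\ast$ and $r_0$ with $p_{G;a}(N)\le p_{G^{(N)};(a,i^\ast,r_0)}(N)\le\max_v p_{G^{(N)};v}(N)$, so $\max_a p_{G;a}(N)\le\max_v p_{G^{(N)};v}(N)$. Because the sequence in Proposition \ref{prop:entropy_convergence_essentiality} increases to the entropy, its $N$-th term underestimates $h(\mathcal{T}_{G^{(N)}})$, giving $\frac{\log\max_a p_{G;a}(N)}{d^{N+1}/(d-1)}\le\frac{\log\max_v p_{G^{(N)};v}(N)}{d^{N+1}/(d-1)}\le h(\mathcal{T}_{G^{(N)}})$. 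Applying Proposition \ref{prop:entropy_convergence_essentiality} to $G$ itself, the left-hand side increases to $h(\mathcal{T}_G)$, so $\liminf_N h(\mathcal{T}_{G^{(N)}})\ge h(\mathcal{T}_G)$.

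For the upper bound, I would apply Proposition \ref{prop:entropy_convergence_essentiality} to the essential component $G_N$ and write $h(\mathcal{T}_{G_N})=\sum_{i=0}^{\infty}\frac{d-1}{d^{i+1}}\log\gamma_i$ with $1\le\gamma_i\le\norm{V^{(N)}}^d$. The partial sum through $i=N$ equals $\frac{\log\max_v p_{G_N;v}(N)}{d^{N+1}/(d-1)}$, and estimating $\max_v p_{G_N;v}(N)\le p_{G_N}(N)\le p_{G^{(N)}}(N)$ together with Proposition \ref{prop:block_comparison}(2) bounds it by $\frac{\log p_G(N)}{d^{N+1}/(d-1)}+\frac{\log((3N+2)\norm{\mathbf{I}})}{d^{N+1}/(d-1)}$; the first summand tends to $h(\mathcal{T}_G)$ because $d^{N+1}/(d-1)\sim\norm{\Delta_N}$, and the second vanishes. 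For the tail I would use that $G^{(N)}$ has only $O(N)$ vertices (a $(2N+1)$-phase pipeline and $N$ dummy vertices per component), so $\log\gamma_i=O(\log N)$ and $\sum_{i=N+1}^{\infty}\frac{d-1}{d^{i+1}}\log\gamma_i=O(\log N)\,d^{-N-1}\to0$. Adding the two pieces yields $h(\mathcal{T}_{G_N})\le h(\mathcal{T}_G)+o(1)$, hence $\limsup_N h(\mathcal{T}_{G_N})\le h(\mathcal{T}_G)$.

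The main obstacle is the upper bound, where one must control $h(\mathcal{T}_{G_N})$ --- a quantity governed by the behaviour of $G_N$ at all depths $n\to\infty$ --- from comparison data available only at depth $n=N$. The decisive structural feature making this possible is that the enlargement producing $G^{(N)}$ inflates the vertex count only polynomially (in fact linearly) in $N$, so the per-level factors $\gamma_i$ grow at most like a polynomial in $N$ while the series weights $d^{-i-1}$ decay geometrically; this is exactly what allows the contribution of the levels beyond $N$ to be absorbed into an $o(1)$ error. I would also take care to invoke the strong connectivity of each $G^{(N)}_{i^\ast}$, guaranteed by the dummy-vertex closure in \textbf{Step 2}, precisely where Proposition \ref{prop:entropy_convergence_essentiality} is applied.
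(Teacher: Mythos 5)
Your proof is correct, and its overall architecture (two one-sided inequalities, with Proposition \ref{prop:block_comparison}(1) feeding the lower bound and Proposition \ref{prop:block_comparison}(2) the upper bound, bridged by Proposition \ref{prop:entropy_convergence_essentiality}) matches the paper's; the lower bound argument is essentially identical. Where you diverge is the upper bound. The paper disposes of it in one line by invoking the Petersen--Salama fact that tree-shift entropy is the \emph{infimum} of $\log p(n)/\norm{\Delta_n}$, so that $h(\mathcal{T}_{G^{(N)}}) \le \log p_{G^{(N)}}(N)/\norm{\Delta_N}$, which combined with Proposition \ref{prop:block_comparison}(2) gives $h(\mathcal{T}_{G^{(N)}}) \le h(\mathcal{T}_G) + \varepsilon$ directly. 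You instead expand $h(\mathcal{T}_{G_N})$ as the full series $\sum_{i\ge 0}\frac{d-1}{d^{i+1}}\log\gamma_i$ from Proposition \ref{prop:entropy_convergence_essentiality}, bound the partial sum through $i=N$ by the depth-$N$ block count via Proposition \ref{prop:block_comparison}(2), and kill the tail by the estimate $\gamma_i \le \norm{V^{(N)}}^d$ together with the observation that $\norm{V^{(N)}}$ grows only linearly in $N$, so the tail is $O(\log N)\,d^{-N}$. Both routes are valid; the paper's is shorter but leans on the infimum characterization as an external input, while yours is self-contained given the two propositions and correctly isolates the structural reason the construction works --- the enlargement inflates the alphabet only polynomially in $N$, so the geometrically weighted levels beyond depth $N$ contribute $o(1)$. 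Your preliminary reduction from $G^{(N)}$ to its maximal-entropy component $G_N$ is also sound and is essentially what the paper defers to the proof of Theorem \ref{thm:denseness_irreducible_shift}.
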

	\begin{proof}
		Let $\varepsilon>0$ be given. Then, there exists $\overline{N} \ge \max_i N_i$ such that if $N \ge \overline{N}$, 
		\begin{equation} \label{eq:approximation_assumption_1}
		    \frac{\log \left[(3 N + 2) \cdot \norm{\mathbf{I}}\right]}{\norm{\Delta_{N}}} \le \frac{1}{2} \varepsilon, 
		\end{equation}
		\begin{equation} \label{eq:approximation_assumption_2}
		    \frac{\log p_G(N)}{\norm{\Delta_N}} \le h(\mathcal{T}_G)+\frac{1}{2} \varepsilon,
		\end{equation}
		and
		\begin{equation} \label{eq:approximation_assumption_3}
		    \frac{\log \max_a p_{G;a}(N)}{d^{N+1}/(d-1)} \ge h(\mathcal{T}_G) - \varepsilon. 
		\end{equation}
		For $N \ge \overline{N}$, combining  \ref{prop:block_comparison} with \eqref{eq:approximation_assumption_1},\eqref{eq:approximation_assumption_2} and \eqref{eq:approximation_assumption_3}, we conclude that
		\begin{align*}
			h(\mathcal{T}_{G}) & \ge \frac{\log p_G(N)}{\norm{\Delta_{N}}} - \frac{1}{2} \varepsilon \\
			& \ge \frac{\log p_{G^{(N)}}(N)}{\norm{\Delta_{N}}} - \frac{\log \left[(3 N + 2) \cdot \norm{\mathbf{I}}\right]}{\norm{\Delta_{N}}} - \frac{1}{2} \varepsilon \\
			& \ge h(\mathcal{T}_{G^{(N)}}) - \varepsilon, 
		\end{align*}
		and from Proposition \ref{prop:entropy_convergence_essentiality} that
		\begin{align*}
			h(\mathcal{T}_{G^{(N)}}) & \ge \frac{\log \max_b p_{G^{(N)};b}(N)}{d^{N+1}/(d-1)} \\
			& \ge \frac{\log \max_a p_{G;a}(N)}{d^{N+1}/(d-1)} \\
			& \ge h(\mathcal{T}_{G}) - \varepsilon.
		\end{align*}
		Since $\varepsilon$ is arbitrary, the proof is completed.
	\end{proof}
	
	To conclude the section, it is left to demonstrate the relative denseness of entropies of irreducible hom Markov tree-shifts within those of all hom Markov tree-shifts.
	\begin{theorem} \label{thm:denseness_irreducible_shift}
		Let $\overline{\mathcal{H}^{(d)}_{irr}}$ and $\overline{\mathcal{H}^{(d)}}$ is as defined in \eqref{eq:set_of_entropy} and \eqref{eq:set_of_irr_entropy}. Then, 
		\[
		\overline{\mathcal{H}^{(d)}_{irr}} = \overline{\mathcal{H}^{(d)}},
		\]
		where $\overline{A}$ denotes the closure of a set $A$.
	\end{theorem}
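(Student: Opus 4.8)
The plan is to prove the two inclusions separately. The inclusion $\overline{\mathcal{H}^{(d)}_{irr}} \subseteq \overline{\mathcal{H}^{(d)}}$ is immediate, since $\mathcal{H}^{(d)}_{irr} \subseteq \mathcal{H}^{(d)}$ by definition and the closure operation is monotone. All the work lies in the reverse inclusion, and since $\overline{\mathcal{H}^{(d)}}$ is the smallest closed set containing $\mathcal{H}^{(d)}$, it suffices to show $\mathcal{H}^{(d)} \subseteq \overline{\mathcal{H}^{(d)}_{irr}}$, i.e.\ that every value $h = h(\mathcal{T}_M)$ with $M$ binary is a limit of entropies of irreducible hom tree-shifts.

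First I would reduce to the essential case. Given binary $M$, iteratively delete every vertex of zero indegree or zero outdegree from its graph $G$ until the process stabilizes, obtaining an essential graph $G'$. A vertex of zero outdegree can never label a node of an (infinite) labeled tree, and a vertex of zero indegree can label only the root; a short computation with $p_{M;a}(n)$ and the relation $|\Delta_n| = 1 + d|\Delta_{n-1}|$ shows that deleting such vertices changes neither $\mathcal{T}_M$ on the internal nodes nor the growth rate, so $h(\mathcal{T}_{G'}) = h(\mathcal{T}_M)$. Thus we may assume $M$ (equivalently $G$) is essential and invoke the construction of $G^{(N)}$ together with Propositions \ref{prop:block_comparison}--\ref{prop:entropy_limit_comparison}.

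Next I would exploit the structure of $G^{(N)}$. By construction $G^{(N)} = \bigcup_{\alphabet_{i^\ast} \in \mathbf{I}} G^{(N)}_{i^\ast}$ is a \emph{disjoint} union of the graphs $G^{(N)}_{i^\ast}$ (their vertices are tagged by distinct $i^\ast$ and $E^{(N)}$ carries no edge between different tags), each of which is strongly connected and hence has irreducible adjacency matrix; here the index set $\mathbf{I}$ is fixed, independent of $N$. Because the components carry no edges between them, every $n$-block of $\mathcal{T}_{G^{(N)}}$ is supported on a single component, whence $p_{G^{(N)}}(n) = \sum_{\alphabet_{i^\ast} \in \mathbf{I}} p_{G^{(N)}_{i^\ast}}(n)$ and therefore
\[
\max_{\alphabet_{i^\ast} \in \mathbf{I}} p_{G^{(N)}_{i^\ast}}(n) \le p_{G^{(N)}}(n) \le \norm{\mathbf{I}} \cdot \max_{\alphabet_{i^\ast} \in \mathbf{I}} p_{G^{(N)}_{i^\ast}}(n).
\]
Taking logarithms, dividing by $|\Delta_n|$ and letting $n \to \infty$, the constant factor $\norm{\mathbf{I}}$ disappears and gives $h(\mathcal{T}_{G^{(N)}}) = \max_{\alphabet_{i^\ast} \in \mathbf{I}} h(\mathcal{T}_{G^{(N)}_{i^\ast}})$. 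Choosing $G_N := G^{(N)}_{i^\ast(N)}$ to be a component attaining this maximum, $G_N$ is irreducible, so $h(\mathcal{T}_{G_N}) \in \mathcal{H}^{(d)}_{irr}$ and $h(\mathcal{T}_{G_N}) = h(\mathcal{T}_{G^{(N)}})$.

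Finally, Proposition \ref{prop:entropy_limit_comparison} gives $h(\mathcal{T}_{G^{(N)}}) \to h(\mathcal{T}_G) = h$ as $N \to \infty$, so $h = \lim_{N} h(\mathcal{T}_{G_N})$ is a limit of points of $\mathcal{H}^{(d)}_{irr}$, i.e.\ $h \in \overline{\mathcal{H}^{(d)}_{irr}}$. This proves $\mathcal{H}^{(d)} \subseteq \overline{\mathcal{H}^{(d)}_{irr}}$ and hence the equality of closures. I expect the main obstacle to be the passage from the entropy of the reducible graph $G^{(N)}$---which is all that Proposition \ref{prop:entropy_limit_comparison} directly controls---to the entropy of a genuinely irreducible graph; this is precisely where the disjoint-union-of-strongly-connected-components structure of $G^{(N)}$ is essential, and where one must be sure that $\norm{\mathbf{I}}$ does not grow with $N$. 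The essential-reduction step is routine but must be recorded, since the whole construction of $G^{(N)}$ presupposes that $M$ is essential.
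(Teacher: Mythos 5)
Your proposal is correct and follows essentially the same route as the paper: both reduce to the construction of $G^{(N)}$, invoke Proposition \ref{prop:entropy_limit_comparison} for the convergence $h(\mathcal{T}_{G^{(N)}}) \to h(\mathcal{T}_G)$, and then select a strongly connected component of the disjoint union $G^{(N)} = \cup_{\alphabet_{i^\ast} \in \mathbf{I}} G^{(N)}_{i^\ast}$ realizing the maximal entropy. The only difference is that you spell out two steps the paper leaves implicit --- the preliminary reduction to an essential matrix (which the construction of $G^{(N)}$ indeed presupposes) and the verification that the entropy of a disjoint union of strongly connected graphs is the maximum of the component entropies --- both of which are correct and welcome additions of rigor rather than a change of method.
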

	\begin{proof}
	    The inclusion $\overline{\mathcal{H}^{(d)}} \supset \overline{\mathcal{H}^{(d)}_{irr}}$ is clear. To obtain the other inclusion, one finds for every $M$ a sequence of binary irreducible matrices $M_N$ satisfying $\lim_{N \to \infty} h(\mathcal{T}_{M_N}) = h(\mathcal{T}_M)$. To this end, we suppose $G$ and $G^{(N)}$ are defined for $M$ as before. Applying Proposition \ref{prop:entropy_limit_comparison} we obtain that $\lim_{N \to \infty} h(\mathcal{T}_{G^{(N)}}) = h(\mathcal{T}_G) = h(\mathcal{T}_M)$. Since $G^{(N)} = \cup_{\alphabet_{i^\ast} \in \mathbf{I}} G^{(N)}_{i^\ast}$, in which $G^{(N)}_{i^\ast}$ are isolated strongly connected components, there must exists $i_N$ such that $h(\mathcal{T}_{G^{(N)}_{i_N}}) = h(\mathcal{T}_{G^{(N)}})$. The proof is finished by choosing matrix representation of $\mathcal{T}_{G^{(N)}_{i^\ast_N}}$ as $M_N$.
	\end{proof}
	
	\begin{corollary} \label{cor:universal_closure}
	    The following two closures of sets are equal.
	    \begin{equation*}
	        \overline{\mathcal{H}^{(d)}_{irr}} = \overline{\{h(\mathcal{T}_X):X \text{ is a shift space}\}}.
	    \end{equation*}
	\end{corollary}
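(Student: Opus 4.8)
The plan is to lean on Theorem \ref{thm:denseness_irreducible_shift}, which already supplies $\overline{\mathcal{H}^{(d)}_{irr}} = \overline{\mathcal{H}^{(d)}}$, so that it suffices to prove $\overline{\mathcal{H}^{(d)}} = \overline{\{h(\mathcal{T}_X): X \text{ is a shift space}\}}$. The inclusion $\subseteq$ is immediate, since every hom Markov tree-shift is $\mathcal{T}_M = \mathcal{T}_{X_M}$ for the shift space $X_M$, whence $\mathcal{H}^{(d)} \subseteq \{h(\mathcal{T}_X)\}$ and the closures are nested. The substance is the reverse inclusion: given an arbitrary shift space $X$, I would produce binary matrices $M_m$ with $h(\mathcal{T}_{M_m}) \to h(\mathcal{T}_X)$.

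First I would approximate $X$ from above by its $m$-step SFTs $X^{(m)} \supseteq X$, the shift whose forbidden words are the length-$(m+1)$ words absent from $X$. These decrease to $X$ and satisfy $\mathcal{B}_{m+1}(X^{(m)}) = \mathcal{B}_{m+1}(X)$. Since an $n$-block of a tree-shift depends only on the $(n+1)$-words of the underlying shift, this gives $p_{\mathcal{T}_{X^{(m)}}}(m) = p_{\mathcal{T}_X}(m)$, while monotonicity of block counts gives $h(\mathcal{T}_X) \le h(\mathcal{T}_{X^{(m)}})$. Invoking the Petersen--Salama result \cite{petersen2018tree,petersen2020entropy} that the limit defining $h$ is actually the infimum of $\log p(n)/|\Delta_n|$, I would bound $h(\mathcal{T}_{X^{(m)}}) \le \frac{\log p_{\mathcal{T}_{X^{(m)}}}(m)}{|\Delta_m|} = \frac{\log p_{\mathcal{T}_X}(m)}{|\Delta_m|}$, whose right side tends to $h(\mathcal{T}_X)$; a squeeze then yields $h(\mathcal{T}_{X^{(m)}}) \to h(\mathcal{T}_X)$.

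Next I would pass from the $m$-step SFT to a genuine hom Markov tree-shift via the overlapping (higher-block) presentation: let $M_m$ be the essential binary matrix on the alphabet $\mathcal{B}_m(X)$ whose transitions $a_1\cdots a_m \to a_2\cdots a_m c$ require $a_1\cdots a_m c \in \mathcal{B}_{m+1}(X)$, and write $q_u(n) = p_{M_m;u}(n)$. A block of $\mathcal{T}_{M_m}$ rooted at $u = a_1\cdots a_m$ decodes bijectively onto a height-$n$ block of $\mathcal{T}_{X^{(m)}}$ rooted at $a_m$ and compatible with the ``stem'' $a_1\cdots a_{m-1}$, so $\max_u q_u(n) \le p_{\mathcal{T}_{X^{(m)}}}(n)$; with Proposition \ref{prop:entropy_convergence_essentiality} this gives $h(\mathcal{T}_{M_m}) \le h(\mathcal{T}_{X^{(m)}})$. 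For the reverse, I would decompose an arbitrary height-$n$ block of $\mathcal{T}_{X^{(m)}}$ at depth $m-1$: conditioned on its restriction $\tau$ to $\Delta_{m-1}$, the $d^{m-1}$ subtrees hanging from the depth-$(m-1)$ nodes are mutually independent, and each is rooted at a node that already carries a genuine length-$(m-1)$ ancestor path inside $\tau$, hence is counted by some $q_u(n-m+1)$. This yields $p_{\mathcal{T}_{X^{(m)}}}(n) \le |\mathcal{A}|^{|\Delta_{m-1}|}\big(\max_u q_u(n-m+1)\big)^{d^{m-1}}$, and because $d^{m-1}\cdot d^{(n-m+1)+1}/(d-1) = d^{n+1}/(d-1)$, dividing by $|\Delta_n|$ and applying Proposition \ref{prop:entropy_convergence_essentiality} once more forces $h(\mathcal{T}_{X^{(m)}}) \le h(\mathcal{T}_{M_m})$. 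Thus $h(\mathcal{T}_{M_m}) = h(\mathcal{T}_{X^{(m)}}) \to h(\mathcal{T}_X)$, placing $h(\mathcal{T}_X)$ in $\overline{\mathcal{H}^{(d)}} = \overline{\mathcal{H}^{(d)}_{irr}}$.

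The step I expect to be the crux is the entropy-preservation of this recoding, because the sliding-block conjugacy that works for one-dimensional shifts breaks down at the root: a node of depth below $m-1$ has too few ancestors to carry an $m$-symbol state, so $\mathcal{T}_{M_m}$ a priori discards the ``source''-type configurations sitting only near the root, which—as the reducible examples of Section 3 show—can inflate the tree-entropy. The resolution I would emphasize is exactly the depth-$(m-1)$ decomposition above: the troublesome boundary is confined to the top $m-1$ layers, contributing only the subexponential factor $|\mathcal{A}|^{|\Delta_{m-1}|}$, while the full exponential growth is carried by the $d^{m-1}$ liftable subtrees. Balancing the index shift $n \mapsto n-m+1$ against the factor $d^{m-1}$ through the normalization $|\Delta_n|$ is what makes the boundary correction vanish in the limit, and it is here that the $\max_a p_{G;a}$-characterization of Proposition \ref{prop:entropy_convergence_essentiality} is indispensable.
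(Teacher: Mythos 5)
Your proposal follows the same route as the paper: reduce to showing $\{h(\mathcal{T}_X): X \text{ is a shift space}\} \subset \overline{\mathcal{H}^{(d)}}$ and then invoke Theorem \ref{thm:denseness_irreducible_shift}. The only real difference is that the paper obtains the approximation of $h(\mathcal{T}_X)$ by Markov tree-shift entropies by citing Corollary 1 of Petersen--Salama \cite{petersen2020entropy}, whose statement concerns exactly the objects you construct (the $1$-step higher-block presentations of the $m$-step Markov approximations sharing the $(m+1)$-blocks of $X$), whereas you re-derive it from scratch. Your derivation is essentially sound: the squeeze via the infimum characterization gives $h(\mathcal{T}_{X^{(m)}}) \to h(\mathcal{T}_X)$, and the depth-$(m-1)$ decomposition correctly shows that the higher-block recoding preserves tree entropy, the top layers contributing only the subexponential factor $|\mathcal{A}|^{|\Delta_{m-1}|}$ against the normalization $|\Delta_n|$. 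If you keep the self-contained version, the one step you should write out is the identity $p_{\mathcal{T}_{X^{(m)}}}(m) = p_{\mathcal{T}_X}(m)$: since the paper counts globally admissible blocks, you must check that an $m$-block of $\mathcal{T}_{X^{(m)}}$, all of whose root-to-leaf words lie in $\mathcal{B}_{m+1}(X^{(m)}) = \mathcal{B}_{m+1}(X)$, actually extends to a full labeled tree in $\mathcal{T}_X$; with the paper's definition (only root rays are constrained) this follows by extending each root ray independently below its depth-$m$ leaf, using that every word of $\mathcal{B}_{m+1}(X)$ is the prefix of $\sigma^j x$ for some $x \in X$ and $j \ge 0$.
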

	\begin{proof}
	    In \cite[Corollary 1]{petersen2020entropy}, Petersen and Salama show that for every shift space $X$, there exists a sequence of Markov shifts $\{X_r: r \in \Nint\}$ which are $1$-step higher block representation of $r$-step Markov shifts that shares a common set of $(r+1)$-blocks as $X$, such that
	    \begin{equation*}
	        \inf\limits_{r} h(\mathcal{T}_{X_r}) = h(\mathcal{T}_X).
	    \end{equation*}
	    Thus,
	    \begin{equation} \label{eq:entropy_inclusion}
	    \{h(\mathcal{T}_X): X \text{ is a shift space}\} \subset \overline{\mathcal{H}^{(d)}}.
	    \end{equation}
	    Combining \eqref{eq:entropy_inclusion} with Theorem \ref{thm:denseness_irreducible_shift}, we deduce the corollary.
	\end{proof}
	
	\begin{remark}
	    Note that $h(\mathcal{T}_G) = \max_{\alphabet_{i^\ast} \in \mathbf{I}} h(\mathcal{T}_{G_{i^\ast}})$. The argument in this section can be simplified in the way that all the discussions are restricted to a single subgraph $G_{i^\ast}$ whose induced space has the same entropy as $\mathcal{T}_{G_{i^\ast}}$.
	\end{remark}
	
\section{Denseness of $h(\mathcal{T}_{M})$}

This section investigates the denseness of $\mathcal{H}^{(d)}_{irr}$, or equivalently the denseness of $\mathcal{H}^{(d)}$. We first introduce some necessary notations. For $l\geq1$, let $\mathbf{y}_{l}=\{y_{i}\}_{i=1}^{l}$ be a positive integer sequence. Given $\mathbf{y}_{l}$ and $N\geq 1$, we defined a directed graph $G=G_{\mathbf{y}_{l};N}$ as follows. The vertex set $V(G_{\mathbf{y}_{l};N})$ of $G_{\mathbf{y}_{l};N}$ is defined by
\begin{equation*}
V(G_{\mathbf{y}_{l};N})=\bigcup_{i=1}^{N+l}V_{i}=\bigcup_{i=1}^{N+l}\left\{v_{i;m(i)}: 1\leq m(i)\leq n_{i}\right\},
\end{equation*}
where
\begin{equation}\label{eqn:2.2-1}
n_{i}=\left\{
\begin{array}{ll}
1 & \text{for }1\leq i\leq N,\\
y_{N+l+1-i} & \text{for }N+1\leq i\leq N+l;
\end{array}
\right.
\end{equation}
and the edge set  $E(G_{\mathbf{y}_{l};N})$ of $G_{\mathbf{y}_{l};N}$ is defined by
\begin{equation*}
E(G_{\mathbf{y}_{l};N})=\overset{N+l}{\underset{i=1}{\bigcup}}\left\{(v_{\alpha},v_{\beta}):v_{\alpha}\in V_{i} \text{ and }  :v_{\beta}\in V_{i+1}\right\},
\end{equation*}
where
$V_{N+l+1}=V_{1}$, and $(v_{\alpha},v_{\beta})$. Clearly, $G_{\mathbf{y}_{l};N}$ is strongly connected.

For example, let $\mathbf{y}_{4}=\{4,2,1,2\}$ and $N=2$. We have $n_{1}=1$, $n_{2}=1$, $n_{3}=y_{4}=2$, $n_{4}=y_{3}=1$, $n_{5}=y_{2}=2$ and $n_{6}=y_{1}=4$. Then,
\begin{equation*}
V(G_{\mathbf{y}_{4};2})=\left\{v_{1;1},v_{2;1},v_{3;1},v_{3;2},v_{4;1},v_{5;1},v_{5;2},v_{6;1},v_{6;1},v_{6;3},v_{6;4}\right\},
\end{equation*}
and the graph $G_{\mathbf{y}_{4};2}$ can be drawn as in Figure \ref{fig:everywhere_dense_graph}.

\begin{figure}
\psfrag{a}{{\footnotesize $v_{1;1}$}}
\psfrag{b}{{\footnotesize $v_{2;1}$}}
\psfrag{c}{{\footnotesize $v_{3;1}$}}
\psfrag{d}{{\footnotesize $v_{3;2}$}}
\psfrag{e}{{\footnotesize $v_{4;1}$}}
\psfrag{f}{{\footnotesize $v_{5;1}$}}
\psfrag{g}{{\footnotesize $v_{5;2}$}}
\psfrag{h}{{\footnotesize $v_{6;1}$}}
\psfrag{k}{{\footnotesize $v_{6;2}$}}
\psfrag{l}{{\footnotesize $v_{6;3}$}}
\psfrag{m}{{\footnotesize $v_{6;4}$}}
\includegraphics[scale=1.4]{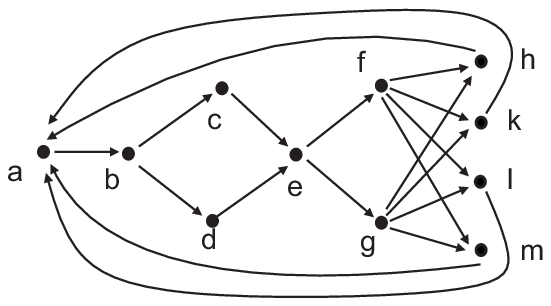}
\caption{Graph representation for entropy approximation}
\label{fig:everywhere_dense_graph}
\end{figure}

Given $n\geq 1$, for any finite sequence $\mathbf{b}_{n}=(b_{1},b_{2},\cdots,b_{n})$, we define the (left) shift map $\sigma(\mathbf{b}_{n})\equiv (b_{2},b_{3},\cdots,b_{n},b_{1})$.
Clearly, $\sigma^{n}(\mathbf{b}_{n})=\mathbf{b}_{n}$. The set of all $\sigma^{m}(\mathbf{b}_{n})$, $1\leq m\leq n$, of $\mathbf{b}_{n}$ is denoted by $\mathcal{S}(\mathbf{b}_{n})$. The main result is shown in Theorem \ref{theorem:2.2.1}.
\begin{theorem}
\label{theorem:2.2.1}
For $d\geq 2$ and any finite positive integer sequence $\mathbf{y}_{l}=(y_{1},y_{2},\cdots,y_{l})$,
\begin{equation}\label{eqn:2.3}
\mu^{(d)}_{\mathbf{y}_{l}}\equiv(d-1)\underset{\mathbf{w}_{l}\in \mathcal{S}(\mathbf{y}_{l})}{\max}\left\{\frac{1}{d}\log w_{1}+\frac{1}{d^{2}}\log w_{2}+\cdots +\frac{1}{d^{l}}\log w_{l}\right\}
\end{equation}
is an accumulation point of $\mathcal{H}^{(d)}_{irr}$. Furthermore, $\mathcal{H}^{(d)}_{irr}$ is dense in
\begin{equation}\label{eqn:2.3}
 \left[(d-1)\log2,\infty\right).
\end{equation}

\end{theorem}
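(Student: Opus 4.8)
The plan is to obtain both assertions from a single explicit computation of $h(\mathcal{T}_{G_{\mathbf{w};N}})$ for suitable integer sequences $\mathbf{w}$, letting $N\to\infty$. Since each $G_{\mathbf{w};N}$ is strongly connected, its adjacency matrix is irreducible, so every such entropy lies in $\mathcal{H}^{(d)}_{irr}$. First I would exploit the layered structure: consecutive layers $V_i,V_{i+1}$ are joined completely, so all vertices of a layer share a common block count $q_i(n)$, and the Petersen--Salama recursion $p_{G;a}(n+1)=((M\mathbf{p}_G(n))_a)^d$ collapses to the scalar system $q_i(n+1)=(n_{i+1}q_{i+1}(n))^d$, $q_i(0)=1$, with indices read cyclically modulo $N+l$. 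Unrolling yields $\log q_i(n)=\sum_{k=1}^n d^k\log n_{i+k}$.

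Dividing by $|\Delta_n|=(d^{n+1}-1)/(d-1)$ gives
\[
\frac{\log q_i(n)}{|\Delta_n|}\longrightarrow (d-1)\sum_{j\ge1}d^{-j}\log n_{F-j+1},\qquad F\equiv i+n \pmod{N+l},
\]
the right side being the layer sizes read backward from the frontier $F$ with geometric weights. By \eqref{eqn:2.0.1} the entropy is the $\limsup$ in $n$, hence the maximum of this expression over all frontiers $F$. The decisive point is that the $N$ singleton layers contribute $\log 1=0$ and therefore act as a gap: starting at a frontier inside the block, one reads a contiguous suffix of $\mathbf{w}$ and then meets the gap, past which every weight has been divided by at least $d^{N}$. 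Consequently, up to the periodicity factor $(1-d^{-(N+l)})^{-1}\to 1$, one finds $h(\mathcal{T}_{G_{\mathbf{w};N}})\to (d-1)\max_{1\le s\le l}\sum_{j=1}^{s}d^{-j}\log w_{l-s+j}$, the largest weighted suffix of $\mathbf{w}$.

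For the accumulation-point statement I would take $\mathbf{w}=\mathbf{w}^\ast\in\mathcal{S}(\mathbf{y}_l)$ to be the cyclic shift realizing the maximum defining $\mu^{(d)}_{\mathbf{y}_l}$. Every length-$s$ suffix of $\mathbf{w}^\ast$ extends to a genuine cyclic shift by appending the remaining entries, which only adds nonnegative terms; since $\mathbf{w}^\ast$ already maximizes over $\mathcal{S}(\mathbf{y}_l)$, each suffix value is at most $\sum_j d^{-j}\log w^\ast_j$, a bound attained by the full-length suffix. Hence the displayed limit equals $(d-1)\sum_j d^{-j}\log w^\ast_j=\mu^{(d)}_{\mathbf{y}_l}$. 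Because for all large $N$ the optimal frontier yields the value $(d-1)\,(\sum_j d^{-j}\log w^\ast_j)\,(1-d^{-(N+l)})^{-1}$, which strictly decreases to $\mu^{(d)}_{\mathbf{y}_l}$, these entropies are distinct and never equal to the limit, so $\mu^{(d)}_{\mathbf{y}_l}$ is a bona fide accumulation point of $\mathcal{H}^{(d)}_{irr}$.

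For denseness I would approximate a target $x\ge(d-1)\log 2$ greedily. Put $y_1=\lfloor\exp(\tfrac{d}{d-1}x)\rfloor\ge 2^{d}$, fixing the leading contribution $\tfrac{d-1}{d}\log y_1\le x$, and then choose $y_2,y_3,\dots$ one level at a time as the largest admissible integers not overshooting $x$. A short estimate shows the residual after $y_1$ is at most $\tfrac{d-1}{dy_1}$, which forces the total tail $\sum_{j\ge2}d^{-j}\log y_j$ to be so small that $\log y_1$ dominates it; by the suffix argument of the previous paragraph the full sequence is then its own largest suffix, so $h(\mathcal{T}_{G_{\mathbf{y}_l;N}})\to(d-1)\sum_{j=1}^l d^{-j}\log y_j$, with no cyclic reordering required. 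Since the admissible increment at level $j$ is as fine as $\tfrac{d-1}{d^{\,j}}\log 2\to 0$, the greedy residual tends to $0$ and the limit can be driven within any $\varepsilon$ of $x$; the endpoint $(d-1)\log 2$ itself arises from $\mathbf{y}_1=(2^{d})$. The main obstacle is precisely the limit computation of the second paragraph---recognizing that the long gap of singleton layers suppresses all but the leading block and thereby pins the entropy to the optimal \emph{suffix}---together with verifying the leading-term dominance that promotes the realized suffix to the full sequence, so that the constructed entropies genuinely equal the intended sums.
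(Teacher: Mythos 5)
Your construction, block count, and limit computation are the same as the paper's, and your handling of the accumulation-point claim is in fact more careful than the paper's own proof. The paper passes from $\max_{\mathbf{w}\in\mathcal{S}(\mathbf{n}_{N+l})}$ to $\max_{\mathbf{w}\in\mathcal{S}(\mathbf{y}_l)}$ as if they were equal, but, as you observe, the $N$ singleton layers discount the wrapped-around prefix by $d^{-N}$, so the true limit of $h(\mathcal{T}_{G_{\mathbf{y}_l;N}})$ is $(d-1)$ times the largest weighted \emph{suffix} of $\mathbf{y}_l$; for $d=2$ and $\mathbf{y}_3=(1,2,3)$ this is $\tfrac12\log 2+\tfrac14\log 3\approx 0.621$, strictly smaller than $\mu^{(2)}_{\mathbf{y}_3}=\tfrac12\log 3+\tfrac18\log 2\approx 0.636$. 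Your remedy --- run the construction on the optimal cyclic shift $\mathbf{w}^\ast$, for which every suffix value is dominated by the full value because appending the complementary prefix only adds nonnegative terms --- is exactly what is needed and repairs the paper's argument while reaching the same conclusion.

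There is, however, a gap in your denseness step. You justify that the greedy sequence is its own largest weighted suffix ``by the suffix argument of the previous paragraph,'' i.e.\ by comparing suffixes with cyclic shifts and using maximality over $\mathcal{S}(\mathbf{y}_l)$; but a greedy output need not maximize over its own cyclic shifts, and smallness of the \emph{total} tail does not force this. Concretely, for $d=2$ and $x=\log 2+\tfrac{1}{32}\log 3$ the greedy procedure yields $(4,1,1,1,3)$, whose total tail is only $\tfrac{1}{32}\log 3$, yet the cyclic shift $(3,4,1,1,1)$ has weighted value $\tfrac12\log 3+\tfrac14\log 4\approx 0.896$, far exceeding the identity's $\approx 0.728$; so the argument you invoke simply does not apply to these sequences. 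The conclusion you need (the full sequence is the best suffix) is nonetheless true, but it must be derived from the level-wise greedy bound rather than from the size of the total tail: greediness at level $m$ gives $r_m<\tfrac{d-1}{d^m}\log\tfrac{y_m+1}{y_m}\le\tfrac{d-1}{d^m}\log 2$, hence every proper suffix, once promoted to the front, has weighted value $\tfrac{d^m}{d-1}(r_m-r_L)<\log 2\le\tfrac1d\log y_1$, which is at most the full value. With that substitution your proof closes; the paper avoids the issue differently, by restricting the tail entries to $\{2^0,\dots,2^{d-1}\}$ and checking explicitly that the identity shift is optimal, which amounts to a base-$d$ expansion argument of the same strength.
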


\begin{proof}
Let $\mathbf{y}_{l}=\{y_{i}\}_{i=1}^{l}$ be a positive integer sequence. For $N\geq 1$, the irreducible graph $G=G_{\mathbf{y}_{l};N}$ can be constructed as above. We label the vertices with $\{1,2,\cdots, K\}$, $K=\left|V(G)\right|$. For convenience, $v_{1;1}$ is labeled by $1$. The graph representation $G$ is strongly connected. For $1\leq i\leq N+l$, $n_{i}$ is defined as (\ref{eqn:2.2-1}); for $i\geq N+l+1$, $n_{i}=n_{r}$ where $i\equiv r (\text{mod } N+l)$ and $n_{0}=n_{N+l}$. Then, by the construction of $G$, it is seen that
\begin{equation*}
p_{G;1}(n)=\overset{n}{\underset{i=0}{\prod}} n_{i+1}^{d^{i}}.
\end{equation*}
From \cite{petersen2018tree}, irreducibility of $G$ implies
\begin{equation*}
h\left(\mathcal{T}_{G}\right)= \underset{n\rightarrow\infty}{\limsup}\frac{\log p_{G;1}(n) }{|\Delta_{n}|}.
\end{equation*}
Let the finite sequence $\mathbf{n}_{N+l}=(n_{N+l},n_{N+l-1},\cdots, n_{1})$.  Since $n_{i}=1$ for $1\leq i\leq N$ and $n_i=y_{N+l+1-1}$ for $N+1\leq i\leq N+l$,
\begin{equation*}
\begin{array}{rcl}
h\left(\mathcal{T}_{G}\right) & =  & \underset{n\rightarrow\infty}{\limsup}\frac{\log p_{G;1}(n) }{|\Delta_{n}|} \\
& & \\
& = & \underset{n\rightarrow\infty}{\limsup}\frac{1}{|\Delta_{n}|} \overset{n}{\underset{i=0}{\sum}} d^{i}\log n_{i+1}    \\
& & \\
& = & \frac{d^{N+l}(d-1)}{d^{N+l}-1} \left(\underset{\mathbf{w}_{N+l}\in \mathcal{S}(\mathbf{n}_{N+l})}{\max}\left\{\frac{1}{d}\log w_{1}+\frac{1}{d^{2}}\log w_{2}+\cdots +\frac{1}{d^{N+l}}\log w_{N+l}\right\}\right)\\
 & & \\
& = & \frac{d^{N+l}(d-1)}{d^{N+l}-1} \left(\underset{\mathbf{w}_{l}\in \mathcal{S}(\mathbf{y}_{l})}{\max}\left\{\frac{1}{d}\log w_{1}+\frac{1}{d^{2}}\log w_{2}+\cdots +\frac{1}{d^{l}}\log w_{l}\right\}\right),
\end{array}
\end{equation*}
and then
\begin{equation*}
\underset{N\rightarrow\infty}{\lim}h\left(\mathcal{T}_{G}\right)=(d-1)\underset{\mathbf{w}_{l}\in \mathcal{S}(\mathbf{y}_{l})}{\max}\left\{\frac{1}{d}\log w_{1}+\frac{1}{d^{2}}\log w_{2}+\cdots +\frac{1}{d^{l}}\log w_{l}\right\},
\end{equation*}
which means that $\mu^{(d)}_{\mathbf{y}_{l}}$ is an accumulation point of $\mathcal{H}^{(d)}_{irr}$.

In particular, we consider $\mathbf{y}_{l}=\{y_{i}\}_{i=1}^{l}$ with $y_{1}=2^{a}$ and $y_{i}\in\{2^{j}:0\leq j\leq d-1\}$ for $2\leq i\leq l$.
It is clear that
\begin{equation*}
 \mu^{(d)}_{\mathbf{y}_{l}}
 = (d-1)\underset{0\leq m\leq l-1}{\max}\left\{\frac{1}{d}\log y_{m+1}+\cdots +\frac{1}{d^{l-m}}\log y_{l}+\frac{1}{d^{l-m+1}}\log y_{1}+\cdots +\frac{1}{d^{l}}\log y_{m}\right\}.
\end{equation*}
Then, it can be verified that
\begin{equation*}
\begin{array}{rl}
 & \mu^{(d)}_{\mathbf{y}_{l}}= (d-1)\left(\frac{1}{d}\log y_{1}+\frac{1}{d^{2}}\log y_{2}+\cdots +\frac{1}{d^{l}}\log y_{l}\right)\\
 &  \\
 \Leftrightarrow & \left(\frac{1}{d}-\frac{1}{d^{l-m+1}}\right)\log y_{1}\geq
  \underset{i=1}{\overset{l-m}{\sum}}\left(\frac{1}{d^{i}}-\frac{1}{d^{m+i}}\right)\log y_{m+i}-\underset{i=2}{\overset{m}{\sum}}\left(\frac{1}{d^{i}}-\frac{1}{d^{i+l-m}}\right)\log y_{i}\\
  & \\
\Leftrightarrow & \left(\frac{1}{d}-\frac{1}{d^{l-m+1}}\right)\log y_{1}\geq  \frac{d^{2}}{d-1}\left(\frac{1}{d}-\frac{1}{d^{m+1}}\right)
\left(\frac{1}{d}-\frac{1}{d^{l-m+1}}\right)\log 2^{d-1} \\
& \\
 \Leftrightarrow & \log y_{1}\geq \frac{d^{2}}{d-1}\left(\frac{1}{d}-\frac{1}{d^{m+1}}\right)\log 2^{d-1} \\
 & \\
 \Leftrightarrow & a \geq d^{2}\left(\frac{1}{d}-\frac{1}{d^{m+1}}\right).
\end{array}
\end{equation*}
for $1\leq m\leq l-1$.
If $y_{1}\geq 2^{d}$, then $a\geq d \geq d^{2}\left(\frac{1}{d}-\frac{1}{d^{m+1}}\right)$ for all $m\geq 1$, implies
\begin{equation}
\mu^{(d)}_{\mathbf{y}_{l}}= (d-1)\left(\frac{1}{d}\log y_{1}+\frac{1}{d^{2}}\log y_{2}+\cdots +\frac{1}{d^{l}}\log y_{l}\right)
\end{equation}
for all $l\geq 1$. Hence,
\begin{equation*}
\underset{N\rightarrow\infty}{\lim}h\left(\mathcal{T}_{G}\right)=(d-1)\left(\frac{1}{d}\log y_{1}+\frac{1}{d^{2}}\log y_{2}+\cdots +\frac{1}{d^{l}}\log y_{l}\right).
\end{equation*}
Since $\left\{\mu_{\mathbf{y}_{l}}:y_{1}\geq 2^{d} \text{ and } y_{i}\in\{2^{j}:0\leq j\leq d-1\} \text{ for }2\leq i\leq l, l\geq1 \right\}$ is dense in
\begin{equation*}
\left[\frac{(d-1)\log y_{1}}{d},\frac{(d-1)\log y_{1}}{d}+\frac{\log 2}{d}\right).
\end{equation*}
It can be proven immediately that
\begin{equation*}
\frac{(d-1)\log (y_{1}+1)}{d}\leq \frac{(d-1)\log y_{1}}{d}+\frac{\log 2}{d}
\end{equation*}
for $y_{1}\geq 2^{d}$. Therefore, $\mathcal{H}^{(d)}_{irr}$ is dense in
\begin{equation*}
\underset{y_{1}\geq 2^{d}}{\bigcup}\left[\frac{(d-1)\log y_{1}}{d},\frac{(d-1)\log y_{1}}{d}+\frac{\log 2}{d}\right)=[(d-1)\log 2,\infty).
\end{equation*}
 The proof is complete.
\end{proof}

\begin{remark}
\label{remarl:4.2}

It is clear that if $\mu^{(2)}_{\mathbf{y}_{l}}\in\left[\frac{1}{2}\log 2,\frac{1}{2}\log 3\right)$, then $y_{i}\in\{1,2\}$ for all $1\leq i\leq l$, which implies that
\begin{equation*}
\mu^{(2)}_{\mathbf{y}_{l}}=\log 2 \left(\underset{\mathbf{w}_{l}\in \mathcal{S}(\mathbf{x}_{l})}{\max}\left\{\frac{1}{2}x_{1}+\frac{1}{2^{2}}x_{2}+\cdots +\frac{1}{2^{l}}x_{l}\right\}\right),
\end{equation*}
where $\mathbf{x}_{l}=(x_{1},x_{2},\cdots, x_{l})$ with $x_{i}=y_{i}-1\in\{0,1\}$. Hence, we have
\begin{equation*}
\{\mu^{(2)}_{\mathbf{y}_{l}}: \text{ positive integer sequence } \mathbf{y}_{l}=(y_{1},y_{2},\cdots,y_{l}), l\geq 1\}
\end{equation*}
cannot contain any subinterval of $\left[\frac{1}{2}\log 2,\frac{1}{2}\log 3\right)$, and then it is not dense in $\left[\frac{1}{2}\log 2,\frac{1}{2}\log 3\right)\subset\left[\frac{1}{2}\log 2,\log 2\right)$. Therefore, our method in this section fails for proving $\mathcal{H}^{(2)}_{irr}$ is dense in $\left[\frac{1}{2}\log 2,\log2\right)$, it needs further investigation.
 On the other hand, we show numerically that $\mathcal{H}^{(2)}_{irr}$ may have holes in $\left[\frac{1}{2}\log 2,\log2\right)$; see Figure \ref{fig:entropy_distribution} in Section 6. The case for $d\geq 3$ is similar.

\end{remark}

%

\section{Further discussion and open problems}

 Two topics, namely, \textbf{i}. $h(\mathcal{T}_{M})$ for reducible $M$, and \textbf{ii}. denseness of $\mathcal{H}^{(d)}$ and relative denseness of $\mathcal{H}^{(d)}_{irr}$ in $\mathcal{H}^{(d)}$ are discussed in this paper. These two topics are studied in Section 3, 4 and 5, but there are many problems left unanswered.

For Part \textbf{i}., Theorem \ref{theorem:3.2} shows that whether $h(\mathcal{T}_{M(a,b:l)})=\log b$ is highly dependent on the irreducible components $E_{a},E_{b}$ and their connection $R_{l}$. To understand factors affecting $h(\mathcal{T}_{M})$ in more detail,
the more general case for reducible binary matrix $M=M(A,B;R)=\left[\begin{array}{cc}A & R\\ O & B\end{array}\right]$ is considered.
In the following proposition, we take $A\in\{G, G^{*}\}$, $G=\left[\begin{array}{cc} 1 & 1 \\ 1 & 0\end{array}\right]$ and
$G^{*}=\left[\begin{array}{cc} 0 & 1 \\ 1 & 1\end{array}\right]$. Clearly, $G$ and $G'$ has the same eigenvalues. Proposition \ref{proposition:5.1} illustrates that whether $h(\mathcal{T}_{M(A,B;R)})=\max\{h(\mathcal{T}_{A}),h(\mathcal{T}_{B})\}$ is affected by the specific structure of the irreducible components $A$ and $B$, not just their eigenvalues.

\begin{proposition}
\label{proposition:5.1}
Suppose $R=\left[\begin{array}{cc} 1 & 0 \\ 0 & 0 \end{array}\right]$. Then, for $d=2$, we have
\begin{equation*}
h(\mathcal{T}_{M(G,E_{2};R)})>\max\{h(\mathcal{T}_{G}),h(\mathcal{T}_{E_{2}})\}=\log 2,
\end{equation*}
and
\begin{equation*}
h(\mathcal{T}_{M(G^{*},E_{2};R)})=\max\{h(\mathcal{T}_{G^{*}}),h(\mathcal{T}_{E_{2}})\}=\log 2.
\end{equation*}
\end{proposition}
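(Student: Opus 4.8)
The plan is to run the Petersen--Salama recursion componentwise and reduce both claims to the boundedness of suitable normalized sequences, exactly as in the proofs of Theorems~\ref{theorem:3.0.2} and \ref{theorem:3.0.2-0}. Index the four states of $M(A,E_2;R)$ by $1,2$ (the $A$-block) and $3,4$ (the $E_2$-block), so that $R_{1,1}=1$ places a single edge from state $1$ to state $3$. First I note that the right-hand max equals $\log 2$ regardless of the case: $h(\mathcal{T}_{E_2})=\log 2$, while the crude bound $P(n+1)\le 4P(n)^2$ for $P(n)=\max_i p_i(n)$ on the stand-alone block gives $h(\mathcal{T}_{G})=h(\mathcal{T}_{G^\ast})\le\log 2$. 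Since $d=2$ the block with label $E_2$ satisfies $p_3(n)=p_4(n)=q(n):=2^{|\Delta_n|-1}$ with $q(n+1)=4q(n)^2$, so it is natural to track the ratios $\chi_i(n):=p_i(n)/q(n)$, with $\chi_i(0)=1$. As in \eqref{eqn:2.3.2}, the whole problem reduces to whether $\{\chi_1(n)\}$ stays bounded: boundedness forces $h(\mathcal{T}_M)=\log 2$, while I will invoke the estimate \eqref{eqn:2.3.3-0} to turn a single value $\chi_1(n)$ above the threshold $(2/1)^{d/(d-1)}=4$ into $h(\mathcal{T}_M)>\log 2$.

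For $A=G$ the self-loop $(G)_{1,1}=1$ at the very state that feeds into $E_2$ is present, so the recursion reads $p_1(n+1)=(p_1(n)+p_2(n)+q(n))^2$ and $p_2(n+1)=p_1(n)^2$. Normalizing gives $\chi_1(n+1)=\tfrac14(\chi_1(n)+\chi_2(n)+1)^2$ and $\chi_2(n+1)=\tfrac14\chi_1(n)^2$, whence in particular $\chi_1(n+1)\ge\bigl(\tfrac12\chi_1(n)\bigr)^2$. This is precisely the hypothesis under which the derivation of \eqref{eqn:2.3.3-0} applies with $a/b=1/2$, so it suffices to exhibit one iterate with $\chi_1(n)>4$. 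A short computation of the first few terms, $\chi_1(1)=\tfrac94$, $\chi_1(2)=\tfrac{49}{16}$, $\chi_2(2)=\tfrac{81}{64}$, and $\chi_1(3)=\tfrac14\bigl(\tfrac{341}{64}\bigr)^2=\tfrac{116281}{16384}>4$, then yields $h(\mathcal{T}_{M(G,E_2;R)})>\log 2$ by \eqref{eqn:2.3.3-0}.

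For $A=G^\ast$ the crucial entry $(G^\ast)_{1,1}=0$ removes that self-loop, and the recursion becomes $p_1(n+1)=(p_2(n)+q(n))^2$ and $p_2(n+1)=(p_1(n)+p_2(n))^2$, i.e. $\chi_1(n+1)=\tfrac14(\chi_2(n)+1)^2$ and $\chi_2(n+1)=\tfrac14(\chi_1(n)+\chi_2(n))^2$. The point is that $(\chi_1,\chi_2)=(1,1)$ solves both fixed-point equations and coincides with the initial data $\chi_i(0)=1$; hence $\chi_1(n)=\chi_2(n)\equiv 1$, so $p_1(n)=p_2(n)=q(n)$ and $p(n)=4\cdot 2^{|\Delta_n|-1}$. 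The ratios being bounded, $h(\mathcal{T}_{M(G^\ast,E_2;R)})=\log 2$, completing the proposition.

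The conceptual heart, and the only place requiring care, is the dichotomy itself: unboundedness of $\chi_1$ does not by itself beat $\log 2$, since $\chi_1$ must diverge at the doubly-exponential rate captured by \eqref{eqn:2.3.3-0}. This is exactly why the argument hinges on producing an explicit iterate past the threshold $4$ in the $G$ case, rather than merely on $\chi_1\to\infty$; symmetrically, recognizing that the $G^\ast$ recursion is stationary at $(1,1)$ sidesteps any boundedness estimate. The contrast is thus traced to the single diagonal entry $(A)_{1,1}$: with the self-loop the constant input $q(n)$ from the $E_2$-block is amplified and the ratio explodes, whereas without it the system sits precisely at its fixed point. This is the phenomenon the proposition is meant to isolate---sensitivity of $h(\mathcal{T}_{M(A,B;R)})$ to the internal form of $A$ beyond its spectral radius.
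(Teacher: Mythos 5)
Your proposal is correct and follows essentially the same route as the paper: normalize by the $E_2$-component, reduce to the behaviour of $\chi_1(n)=p_1(n)/p_3(n)$, use $\chi_1(n+1)\ge\bigl(\tfrac12\chi_1(n)\bigr)^2$ together with the estimate \eqref{eqn:2.3.3-0} for the $G$ case, and observe the equal-row-sum/stationary behaviour for the $G^\ast$ case. The only cosmetic difference is that the paper establishes $\chi_1(n)\to\infty$ via the fixed-point analysis of $f(x)=\bigl(\tfrac12 x+\tfrac12\bigr)^2$ before invoking the threshold, whereas you verify directly that $\chi_1(3)>4$; both correctly cross the threshold needed in \eqref{eqn:2.3.3-0}.
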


\begin{proof}
First, $M=M(G,E_{2};R)$ is considered.
From \cite{petersen2018tree}, we have
\begin{equation*}
 \left\{
  \begin{array}{l}
 p_{1}(n+1)=\left(p_{1}(n)+p_{2}(n)+p_{3}(n)\right)^{2},\\
 p_{2}(n+1)=\left(p_{1}(n)\right)^{2},\\
 p_{3}(n+1)= p_{4}(n+1)=\left(p_{3}(n)+p_{4}(n)\right)^{2}=2^{2^{n+2}-2},
 \end{array}
\right.
\end{equation*}
with $ p_{i}(0)=1$, $1\leq i\leq 4$. Let $\chi_{n}=p_{1}(n)/p_{3}(n)$, $n\geq 0$. In particular, $\chi_{0}=1$ and $\chi_{1}=9/4>1$. Clearly,
\begin{equation*}
\chi_{n+1}=\left(\frac{1}{2}\chi_{n}+\frac{p_{2}(n)}{2p_{3}(n)}+\frac{1}{2}\right)^{2}\geq \left(\frac{1}{2}\chi_{n}+\frac{1}{2}\right)^{2}
\geq \frac{1}{4}\chi_{n}^{2},
\end{equation*}
for $n\geq 0$.

Let $f(x)=\left(\frac{1}{2}x+\frac{1}{2}\right)^{2}$. It can be checked that $f(x)$ is increasing for $x\geq 1$, and there
exists unique intersection point of $y=f(x)$ and $y=x$ at $x=1$.  Since $\chi_{n+1}\geq f(\chi_{n})$ with initial term $\chi_{1}>1$, $\{\chi_{n}\}$ approaches $\infty$ as $n$ tends to $\infty$. Similar to the proof of Theorem \ref{theorem:3.0.2}, from $\chi_{n+1}\geq \frac{1}{4}\chi_{n}^{2}$, we have for $n\geq 1$,
\begin{equation*}
\limsup\limits_{m \to \infty}\frac{\log \chi_{n+m}}{2^{n+m+1}-1} \geq 2^{-n-1} (\log \chi_{n}-2\log2),
\end{equation*}
which yields $h(\mathcal{T}_{M(G,E_{2};R)})> \log 2$.

On the other hand, consider $M=M(G^{*},E_{2};R)$. Since
$M(G^{*},E_{2};R)$ has the same row sum $2$, we can obtain the estimation $2^{2^{n+1}-1}\leq p_{M(G^{*})}(n)\leq 4\cdot2^{2^{n+1}-2}$ for $n\geq 0$. Therefore, $h(\mathcal{T}_{M(G^{*},E_{2};R)})= \log 2$. The proof is complete.

\end{proof}

\begin{problem}
In Section 3, we reveal that $A$, $B$ and $R$ play important roles on the problem whether the inequality $h(\mathcal{T}_{M(A,B;R)}) = \max \{h(\mathcal{T}_{A}),h(\mathcal{T}_{B})\}$ holds. However, the set of $(A,B;R)$ discussed in Section 3 seems to be restricted. Such problem for general $(A,B;R)$ remains.

\end{problem}

For one-dimensional shifts of finite type, it is known that the computation of the value $h(X_{M(A,B;R)})$ is independent of $R$. Theorem \ref{theorem:3.2} reveals that $R$ has a dramatic influence on the computation of $h(\mathcal{T}_{M(A,B;R)})$. Table 5 shows numerically that for $d=2$ and $A=B=G$, $h(\mathcal{T}_{M(G,G;R}))>h(\mathcal{T}_{G})$ for non-zero matrix $R$, but $h(\mathcal{T}_{M(G,G;R)})$ is different for all $R$. It illustrates that $h(\mathcal{T}_{M})$ for general reducible $M$ may be affected by all connection $R$'s among irreducible components.

\begin{center}
\begin{tabular}{|c|c|c|c|c|c|}
\hline
$R=[r_{1,1},r_{1,2};r_{2,1},r_{2,2}]$ & $h(\mathcal{T}_{M(G,G;R)})$ & $h(\mathcal{T}_{G})$\\
\hline
[0,0;0,0]& 0.20898764 & 0.20898764
 \\
\hline
[0,0;0,1]& 0.26939373
 & 0.20898764
 \\
\hline[0,0;1,0]& 0.28511043
 & 0.20898764
 \\
\hline[0,0;1,1]& 0.323928413
 & 0.20898764
 \\
\hline[0,1;0,0]& 0.310989658
 & 0.20898764
 \\
\hline[0,1;0,1]& 0.332599455
 & 0.20898764
 \\
\hline[0,1;1,0]& 0.341668176
 & 0.20898764
 \\
\hline[0,1;1,1]& 0.365964234
 & 0.20898764
 \\
\hline[1,0;0,0]& 0.330387956
 & 0.20898764
 \\
\hline[1,0;0,1]& 0.349104083
 & 0.20898764
 \\
\hline[1,0;1,0]& 0.355789035
 & 0.20898764
 \\
\hline[1,0;1,1]& 0.378081254
 & 0.20898764
 \\
\hline[1,1;0,0]& 0.381226587
 & 0.20898764
 \\
\hline[1,1;0,1]& 0.393079449
 & 0.20898764
 \\
\hline[1,1;1,0]& 0.397923765
 & 0.20898764
 \\
\hline[1,1;1,1]& 0.413311852
 & 0.20898764
 \\
 \hline
\end{tabular}
\end{center}
\begin{equation*}
\text{Table 5.}
\end{equation*}
\begin{figure}[H]
    \centering
    \includegraphics[trim=180 0 0 0,scale=0.43]{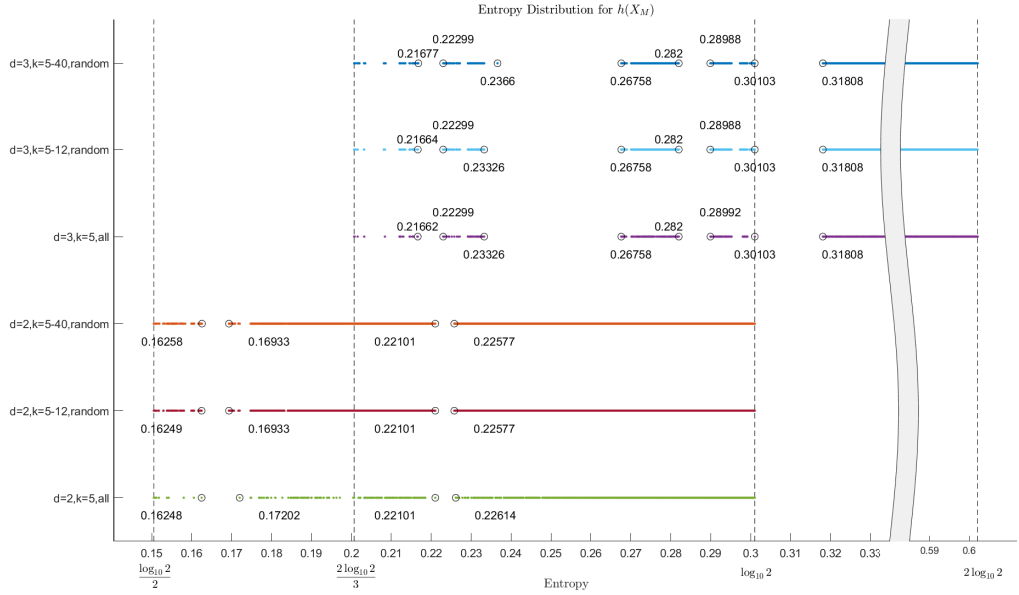}
    \caption{Distribution of entropies $\mathcal{H}^{(d)}$ for $d=2$ and $d=3$ ($\log$ is computed with base $10$.)}
\label{fig:entropy_distribution}
\end{figure}

For part \textbf{ii}., since Theorem \ref{theorem:2.2.1} shows that for $d\geq 2$, $\mathcal{H}^{(d)}_{irr}$ is dense in $[(d-1)\log 2,\infty)$ and the authors \cite{ban2020topological} prove $\mathcal{H}^{(d)} \bigcap\left(0,\frac{d-1}{d}\log2\right)=\emptyset$.
A straightforward problem is to determine whether $\mathcal{H}^{(d)}_{irr}$ or $\mathcal{H}^{(d)}$ is dense in $\left(\frac{d-1}{d}\log 2, (d-1)\log 2 \right)$. Our method in Section 4 can not work in this circumstances, and further study is needed to solve this problem.
The following figure illustrates numerically that $\mathcal{H}^{(2)}$ may have two holes in the intervals $(0.16579,0.16933)$ and $(0.22219,0.22577)$, and $\mathcal{H}^{(3)}$ also may have four holes in the intervals $(0.21677,0.22299)$, $(0.2366,0.26758)$, $(0.282, 0.28988)$ and $(0.30103,0.31808)$. Therefore, it is possible that $\mathcal{H}^{(d)}$ is not dense in $\left(\frac{d-1}{d}\log 2, (d-1)\log 2 \right)$.

Since it can happen that $h(\mathcal{T}_{M})>\underset{1\leq i\leq q}{\max}\{h(\mathcal{T}_{M_{i}})\}$, the following problem is raised naturally.
\begin{problem}
Is $\mathcal{H}^{(d)}_{irr}=\mathcal{H}^{(d)}$?
\end{problem}
For one-dimensional SFTs, $\left\{h(X_{M}):M \text{ is binary}\right\}$ is equal to the set of all logarithms of Perron numbers in $[1,\infty)$; for higher-dimensional SFTs, Hochman and Meyerovitch \cite{HM-AoM2010a} prove that a non-negative number can be an entropy if and only if it is \emph{right recursively enumerable} (the detailed definition is omitted here, see \cite{HM-AoM2010a}).

\begin{problem}
Is there any classification result on the entropies of hom Markov tree-shifts? In particular, does the class $\mathcal{H}^{(d)}$ contains all logarithms of Perron numbers or all right recursively enumerable numbers in its dense intervals?
\end{problem}

\bibliographystyle{amsplain_abbrv}
\bibliography{ban}

\end{document}